\documentclass[a4paper,12pt,reqno]{amsart}
\usepackage[a4paper,margin=0.9in]{geometry}
\usepackage[T1]{fontenc}
\usepackage{lmodern}
\usepackage{amsmath,amssymb,amsthm,mathtools}
\usepackage{esint}
\usepackage{microtype}

\AtBeginDocument{\begingroup\footnotesize\fontencoding{OT1}\selectfont\endgroup}
\usepackage{xcolor}
\usepackage[colorlinks=true,allcolors=blue,bookmarksnumbered=true]{hyperref}
\hypersetup{pdftitle={Averaged derivatives and reverse inequalities in quasiregular Bergman spaces},pdfauthor={Suman Das},pdfsubject={Weighted quasiregular Bergman spaces, concentration, and geometric embeddings},pdfkeywords={Quasiregular mapping, weighted Bergman space, averaged derivative, dominating set, Carleson measure, conformal mapping, compact embedding}}
\numberwithin{equation}{section}
\newtheorem{theorem}{Theorem}[section]
\newtheorem{proposition}[theorem]{Proposition}
\newtheorem{lemma}[theorem]{Lemma}
\newtheorem{corollary}[theorem]{Corollary}
\theoremstyle{definition}

\theoremstyle{remark}

\newtheorem*{remark*}{Remark}

\newcommand{\R}{\mathbb R}

\newcommand{\A}{\mathcal A}
\newcommand{\dd}{\,\mathrm d}

\newcommand{\diam}{\operatorname{diam}}
\newcommand{\osc}{\operatorname{osc}}
\newcommand{\cof}{\operatorname{cof}}
\newcommand{\supp}{\operatorname{supp}}
\newcommand{\avint}{\fint}
\newcommand{\Dav}{D_{\mathrm{av}}}
\newcommand{\Kstar}{K_*(n)}
\DeclareMathOperator{\arctanh}{arctanh}
\allowdisplaybreaks[2]
\title[Quasiregular Bergman spaces]{Averaged derivatives and reverse inequalities in quasiregular Bergman spaces}
\author{Suman Das}
\address{Department of Mathematics, Shantou University, Shantou, Guangdong 515063, P.~R. China}
\email{suman@stu.edu.cn}
\date{}
\subjclass[2020]{Primary 30C65; Secondary 30H20}
\keywords{Quasiregular mapping, weighted Bergman space, averaged derivative, dominating set, Carleson measure, conformal mapping}
\begin{document}
\begin{abstract}
We study weighted Bergman spaces of quasiregular mappings on the unit ball in $\R^n$, with weight $(1-|x|)^\alpha$, $\alpha>-1$. For every fixed distortion bound $K\ge1$ and every $p>0$, we characterize Bergman integrability by averaged derivatives. We also give geometric conditions under which integration over a subset, or against another measure, controls the full Bergman integral. When $K$ is sufficiently large, the dominating sets are precisely the relatively dense sets. For comparison, we determine the dominating sets for the M\"obius class $K=1$ in dimensions $n\ge3$. At the critical exponent $p=n+\alpha$, its domination criterion differs from that for every fixed $K>1$.

Our results extend aspects of the Bergman theory developed by Nolder~\cite{Nolder90} and by Oleinik and Vuorinen~\cite{OV}, and complement the recent work of Adamowicz and Gonz\'alez~\cite{AG21,AG25} and Adamowicz and Caama\~no~\cite{AC} on quasiregular Hardy spaces.
\end{abstract}
\maketitle

\section{Introduction}\label{sec:intro}

Let $B=B(0,1)$ be the unit ball in $\R^n$, $n\ge2$, and let $\delta(x)=1-|x|$.
For $K\ge1$, a continuous mapping $f\in W^{1,n}_{\mathrm{loc}}(B,\R^n)$ is
$K$-quasiregular if
\begin{equation}\label{eq:distortion}
 |Df(x)|^n\le KJ_f(x)\quad\text{for almost every }x\in B.
\end{equation}
Here $|Df|$ is the operator norm and $J_f=\det Df$. We use the
outer-distortion convention and include constant mappings. For $\alpha>-1$,
we write
\[
 dV_\alpha(x)=\delta(x)^\alpha\dd x,\qquad
 V_\alpha(E)=\int_EdV_\alpha,\qquad Q=n+\alpha.
\]
The weighted Bergman class $\A^p_{K,\alpha}$ consists of the
$K$-quasiregular mappings on $B$ for which
\[
 \|f\|_{p,\alpha}
 =\left(\int_B|f(x)|^p\,dV_\alpha(x)\right)^{1/p}<\infty,
 \qquad 0<p<\infty.
\]
This class is generally nonlinear; we use the metric inherited from
$L^p(dV_\alpha)$, described in Section~\ref{sec:metric}. We write $A\asymp B$ when each quantity is bounded by a constant multiple of the other.

Nolder~\cite{Nolder90} established Bergman measure inequalities for
quasiregular mappings, and Oleinik and Vuorinen~\cite{OV} developed
weighted embeddings and compactness on quasiconformal balls. Bergman
estimates for the harmonic quasiregular subclass are studied in~\cite{DR}.

Our primary motivation comes from the work of Adamowicz and
Gonz\'alez~\cite{AG21,AG25} on quasiregular Hardy spaces, which extends
the quasiconformal theory of Astala and Koskela~\cite{AK}.
In~\cite{AG25}, they relate Hardy integrability to boundary values and
nontangential maximal functions for nonvanishing quasiregular mappings
satisfying certain growth and multiplicity conditions. They also establish a
Hardy--Bergman embedding under these hypotheses. The Bergman integral
offers a complementary point of view: \textit{since it is taken over the ball,
one can start from interior estimates without first constructing a
boundary-value function.} We study this integral on the full class with
a fixed distortion bound. One naturally asks whether it can be described by averaged
derivatives, and when integration over a smaller set, or against another
measure, controls its size.

In~\cite{Nolder}, Nolder used averaged derivatives in the study of continuity
and BMO. Recently, Adamowicz and Caama\~no~\cite{AC} studied them in the Hardy
space setting and compared their integrals at different fixed averaging radii.

For $0<c<1$, define
\[
 D_{\mathrm{av},c}f(x)
 =\left(\avint_{B(x,c\delta(x))}|Df(y)|^n\dd y\right)^{1/n},
 \qquad \Dav f=D_{\mathrm{av},1/2}f,
\]
where $\avint$ denotes the ordinary volume average. Averaging is needed
because the derivative of a bounded quasiregular mapping need not be
locally $L^p$-integrable for large $p$.

Our first main theorem characterizes weighted Bergman integrability in terms
of these averaged derivatives.

\begin{theorem}\label{thm:mainnorm}
Let $K\ge1$, $0<p<\infty$, $\alpha>-1$, and $0<c<1$. Every
$K$-quasiregular mapping $f:B\to\R^n$ satisfies
\begin{equation}\label{eq:mainnorm}
 \|f\|_{p,\alpha}^p
 \asymp |f(0)|^p+
 \int_B\bigl(\delta(x)D_{\mathrm{av},c}f(x)\bigr)^p\,dV_\alpha(x).
\end{equation}
Equivalently,
\[
 \|f\|_{p,\alpha}^p
 \asymp |f(0)|^p+
 \int_B\left(\int_{B(x,c\delta(x))}J_f(y)\dd y\right)^{p/n}
 dV_\alpha(x).
\]
The constants depend only on $n,K,p,\alpha,c$. In particular, $f\in\A^p_{K,\alpha}$ if and only if either of
the above integrals is finite.
\end{theorem}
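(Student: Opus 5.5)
The plan is to prove the two inequalities in \eqref{eq:mainnorm} separately, using two standard local tools for $K$-quasiregular mappings. The first is a \emph{Caccioppoli-type estimate}:
\[
\int_{B(x,r)}|Df|^n\le C\,r^{-n}\int_{B(x,2r)}|f-a|^n
\]
for every constant vector $a$. The second is a \emph{local oscillation estimate}:
\[
\osc_{B(x,r)}f\le C\Bigl(\int_{B(x,2r)}|Df|^n\Bigr)^{1/n}.
\]
This follows from the monotonicity (weak monotonicity) of the coordinates of $f$, which are $\mathcal A$-harmonic in the sense of solutions of quasilinear equations, together with the Gehring--Lehto type oscillation lemma on spheres. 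Combined with the weak reverse H\"older / local boundedness for $|f|$ (since $|f|^{p}$ is subharmonic-like, i.e.\ $\log|f|$ is $\mathcal A$-subharmonic), this gives
\[
\sup_{B(x,r)}|f|^p\le C\avint_{B(x,2r)}|f|^p
\]
for all $p>0$. Throughout, Whitney-type balls $B(x,c\delta(x))$ have $\delta\asymp\delta(x)$ on them, and any two radii $c,c'$ are comparable by a finite covering argument. So it suffices to treat a convenient $c$ and to pass between $D_{\mathrm{av},c}$ and $\delta^{-1}(\int_{B(x,c\delta)}J_f)^{1/n}$ using $J_f\le|Df|^n\le KJ_f$.

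\textbf{Upper bound for the derivative integral.} Apply Caccioppoli with $a=0$. If $p\ge n$, Jensen's inequality is unnecessary, since one directly has
\[
(\delta(x)D_{\mathrm{av},c}f(x))^p\le C\sup_{B(x,2c\delta(x))}|f|^p\le C\avint_{B(x,c'\delta(x))}|f|^p.
\]
For any $p$ this holds after using local boundedness, with $c'<1$ after shrinking $c$ first. Integrate against $dV_\alpha$ and use Fubini: each $y$ lies in $B(x,c'\delta(x))$ only for $x$ in a set of volume $\asymp\delta(y)^n$ with $\delta(x)\asymp\delta(y)$. This gives a bound by $C\|f\|_{p,\alpha}^p$. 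Since $|f(0)|^p\le C\int_{B(0,1/2)}|f|^p$, this direction is done.

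\textbf{Lower bound, the main obstacle.} The hard part is controlling $\|f\|_{p,\alpha}^p$ by the derivative term plus $|f(0)|^p$. I would fix a dyadic (hyperbolic) Whitney decomposition and write $|f(x)|\le|f(0)|+\sum_{j}\osc_{Q_j}f$ along a chain of Whitney cubes $Q_0,\dots,Q_{m}$ from the origin to the cube containing $x$. By the oscillation estimate, $\osc_{Q_j}f\le C\,\delta_jD_{\mathrm{av},c}f(x_j)$. So $|f(x)|\le |f(0)|+C\sum_{Q\ni\text{ancestor of }x}\delta_Q D_Q$, a dyadic ``tree'' (Hardy-type) operator. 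The weighted inequality
\[
\int_B\Bigl(\sum_{Q\supset x}a_Q\Bigr)^pdV_\alpha\le C\sum_Q a_Q^p\,V_\alpha(Q)
\]
holds because $V_\alpha$ of the shadow (tent) of $Q$ is $\asymp\delta_Q^{n-1}\cdot\delta_Q^{1+\alpha}$, which is comparable to $V_\alpha(Q)$. For $p\le1$, use $(\sum a)^p\le\sum a^p$ and sum over shadows. For $p>1$, use H\"older with geometric weights $\delta_Q^{\varepsilon}$, exploiting $\alpha>-1$ so that the sums over generations converge. The right side is $\asymp\int_B(\delta D_{\mathrm{av},c}f)^pdV_\alpha$ by the Whitney comparability above.

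The delicate point is the oscillation estimate with only $|Df|^n$-integrability. I would justify it via the classical fact that quasiregular coordinate functions are monotone in Lebesgue's sense and then apply Gehring's oscillation lemma, which gives $\osc_{B(x,r)}^n f\le C\int_{B(x,2r)}|Df|^n/\log 2$. The Jacobian form of the theorem then follows immediately from $J_f\le|Df|^n\le KJ_f$.
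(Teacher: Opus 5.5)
Your argument is correct. The upper bound is the paper's: Caccioppoli with $b=0$ gives $\delta D_{\mathrm{av},c}f\le C\mathcal M_{c'}f$, and the local supremum estimate together with Tonelli (Lemma~\ref{thm:maximal}) finishes. The paper takes the cutoff between $B(x,c\delta(x))$ and $B(x,\frac{1+c}{2}\delta(x))$, so it needs neither the restriction $2c<1$ nor a reduction to a convenient radius.

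Your lower bound differs from the paper's in two respects.
\begin{itemize}
\item For the oscillation estimate you use monotonicity of the coordinates, which follows from openness, and Gehring's oscillation lemma. The paper instead applies the local supremum estimate with exponent $n$ to $f-b_x$ and then Poincar\'e's inequality, which needs nothing beyond Lemma~\ref{lem:local}.
\item For the summation you discretize into a Whitney tree and prove a dyadic Hardy inequality from $V_\alpha(\text{shadow of }Q)\asymp V_\alpha(Q)$. When $p>1$ this needs an auxiliary weight $\delta_Q^{\varepsilon}$ with $0<\varepsilon<(1+\alpha)/p$ in the H\"older step. The paper stays continuous: it chains through the points $x_j=(1-\lambda^j(1-r))\omega$, and the radial change of variables $y=x_j$ yields $\int_BH_j^p\,dV_\alpha\le 2^{n-1}\lambda^{-j(\alpha+1)}\int_BG^p\,dV_\alpha$ directly. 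Minkowski's inequality or subadditivity then sums the chain with no auxiliary exponent and no tree combinatorics.
\end{itemize}
The price of the paper's route is a separate averaging argument for $\sup_{B(0,1/2)}|f|$, where no such change of variables is available and pointwise values of $G$ are not controlled by $\|G\|_{p,\alpha}$. Your cube formulation treats all scales uniformly.

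When you write it out, define $a_Q$ through the energy of a slightly enlarged cube $\widetilde Q$ contained in $B(y,c\delta(y))$ for every $y\in Q$. This is possible once the Whitney cubes are small relative to $c\delta$. Do not define $a_Q$ through $D_{\mathrm{av},c}f$ at a centre point $x_j$. With this choice, $a_Q^pV_\alpha(Q)\le C\int_Q(\delta D_{\mathrm{av},c}f)^p\,dV_\alpha$ is immediate, and you never need the comparison between different radii. That comparison is not a pointwise finite-covering fact anyway: for $p<n$ it requires averaging over centres (cf.\ the remark citing~\cite[Theorem~3.7]{AC}).
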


The value at zero accounts for constant mappings. The characterization
holds for all $p>0$, without additional growth or multiplicity assumptions.

We next ask when a part of the ball determines the whole integral.
A measurable set $E\subset B$ is \emph{dominating} for
$\A^p_{K,\alpha}$ if
\begin{equation}\label{eq:domination}
 \|f\|_{p,\alpha}^p\le C_E\int_E|f|^p\,dV_\alpha
 \qquad(f\in\A^p_{K,\alpha}).
\end{equation}
For $a\in B$ and $0<r<1$, let
\[
 \Delta_r(a)=\{x\in B:\rho(a,x)<r\},\qquad
 \rho(a,x)=\frac{|x-a|}
 {\bigl(|x-a|^2+(1-|a|^2)(1-|x|^2)\bigr)^{1/2}}.
\]
These are hyperbolic balls, with $V_\alpha(\Delta_r(a))\asymp\delta(a)^Q$
for fixed $r$. We call $E$ \emph{relatively dense} if some $r\in(0,1)$
and $\eta>0$ satisfy
\begin{equation}\label{eq:density}
 V_\alpha(E\cap\Delta_r(a))\ge\eta V_\alpha(\Delta_r(a))
 \qquad(a\in B).
\end{equation}
This is the familiar geometric condition in the analytic theory of
dominating sets~\cite{Lue84,Lue85,DS,GW}.

The next theorem shows that relative density is sufficient for domination.
It also gives two ranges in which this condition is necessary.

\begin{theorem}\label{thm:maindomination}
Let $K\ge1$, $0<p<\infty$, and $\alpha>-1$. Every relatively dense
measurable set is dominating for $\A^p_{K,\alpha}$. There is a dimensional
constant $\Kstar$, with $K_*(2)=1$, such that the converse holds if
$K\ge\Kstar$. The converse also holds whenever
\begin{equation}\label{eq:radialrange}
 pK^{1/(n-1)}>n+\alpha.
\end{equation}
\end{theorem}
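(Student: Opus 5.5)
For sufficiency, we would work in the standard way of the analytic theory, through a local subharmonicity-type estimate combined with a lower bound on the set where $|f|$ is comparable to its local size. The basic tool is that $|f|^p$ is controlled by averages on hyperbolic balls. For $K$-quasiregular $f$, $\log|f|$ is $\mathcal A$-subharmonic for an elliptic operator with constants depending on $n,K$, so $\sup_{\Delta_r(a)}|f|^p\lesssim \avint_{\Delta_{r'}(a)}|f|^p$ for every $p>0$. What is needed beyond this is a reverse statement: $|f|$ cannot be much smaller than its local supremum except on a set of small measure.

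The plan is as follows. Fix a hyperbolic radius $R$ larger than the $r$ in \eqref{eq:density}. For each $a$, let $M(a)=\sup_{\Delta_R(a)}|f|$. We want to show that $\{x\in\Delta_r(a):|f(x)|<\varepsilon M(a)\}$ has volume at most $\tfrac{\eta}{2}V(\Delta_r(a))$ once $\varepsilon$ is small, uniformly in $f$. Rescale $\Delta_R(a)$ by a Möbius map to a fixed ball and normalize $M=1$. Quasiregular maps bounded by $1$ form a normal family, and the limits are again $K$-quasiregular. The key point is that a nonconstant quasiregular map has a discrete zero set, so the sets $\{|g|<\varepsilon\}$ shrink in measure to zero. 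A compactness argument then gives a uniform $\varepsilon$.

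The limiting case needs care. If $g\equiv0$ on $\Delta_r$, then $g$ is constant, so $g\equiv0$, which contradicts $\sup_{\Delta_R} g=1$ by uniform convergence on a slightly smaller ball. To keep the normalization meaningful, we would take $M$ as the supremum over the closure of a slightly smaller ball and $R$-average on a larger one. A small constant $g\neq0$ is also excluded by the normalization. With the uniform $\varepsilon$ in hand, we get
\[
\int_{E\cap\Delta_r(a)}|f|^p\,dV_\alpha\gtrsim \varepsilon^pM(a)^p\,\delta(a)^Q\gtrsim\int_{\Delta_r(a)}|f|^p\,dV_\alpha.
\]
Summing over a hyperbolic lattice with bounded overlap yields \eqref{eq:domination}.

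For necessity, we argue by contrapositive: if $E$ is not relatively dense, then for each $r$ there are points $a_j$ with $V_\alpha(E\cap\Delta_r(a_j))/V_\alpha(\Delta_r(a_j))\to0$. We need test maps in $\A^p_{K,\alpha}$ whose $L^p(dV_\alpha)$ mass concentrates on $\Delta_r(a_j)$, say at least a fixed fraction of it, while their size on the complement is controlled. The construction splits into two regimes.

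In the regime \eqref{eq:radialrange}, we use the radial stretch $x\mapsto |x|^{\beta-1}x$, with $\beta$ of size $K^{-1/(n-1)}$ up to the distortion convention, composed with a Möbius map sending $0$ to a boundary point near $a_j$. More precisely, we take $f_j(x)=|x-\zeta_j|^{-\gamma}(x-\zeta_j)$-type singular stretches centered at $\zeta_j=a_j/|a_j|\cdot(1+\delta(a_j))$ outside the ball. These are $K$-quasiconformal exactly when the exponent $\gamma$ lies in the allowed range determined by $K^{1/(n-1)}$. Then $|f_j|^p\approx|x-\zeta_j|^{-\gamma' p}$, and this concentrates its $V_\alpha$-mass within hyperbolic distance $r$ of $a_j$, up to a fraction tending to $1$ as $r\to1$, precisely when $\gamma'p>Q$. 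This is the origin of the condition $pK^{1/(n-1)}>n+\alpha$. Choosing $r$ close to $1$ and dividing by $\|f_j\|^p$ would contradict \eqref{eq:domination}.

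In the large-$K$ regime, $K\ge\Kstar$, we would instead use a quasiregular map with arbitrarily fast polynomial-type growth at one boundary point. Examples are Zorich-type or winding maps, whose distortion is a fixed dimensional constant $\Kstar$. Composed with powers, these give concentration for every $p$; in the plane, the maps $z\mapsto(1-\bar\zeta z)^{-N}$ are conformal, which explains $K_*(2)=1$. We expect the main obstacle to be constructing these uniformly concentrating test maps in dimension $n\ge3$ for arbitrary $p$. The compactness step in the sufficiency argument is routine by comparison.
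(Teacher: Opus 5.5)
Your sufficiency argument has a genuine gap. The uniform small-value estimate you aim for is false. In the plane, where every $K\ge1$ is allowed, take $a=0$ and $f(z)=(z/R)^N$. Then $\sup_{\Delta_R(0)}|f|=1$, but $|f|\le(r/R)^N$ on all of $\Delta_r(0)$. Changing the normalization (supremum over a smaller closed ball, average over a larger ball) does not help. A sequence normalized by its size on one ball is bounded only on that ball and converges only on compact subsets of its interior, where the limit may vanish identically, as these powers do. So the contradiction you describe in the limiting case is never reached.

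Worse, the per-ball inequality $\int_{\Delta_r(a)}|f|^p\,dV_\alpha\lesssim\int_{E\cap\Delta_r(a)}|f|^p\,dV_\alpha$ that you sum is itself false uniformly in $f$. For $f(z)=(z/r)^N$ and a relatively dense $E$ meeting $\Delta_r(0)$ only in $\Delta_{r/2}(0)$, the ratio grows like $2^{Np}$.

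The missing idea is to discard the balls on which $f$ grows too fast. Put $A_j=\int_{\Delta_r(a_j)}|f|^p\,dV_\alpha$ and $A_j^*=\int_{\Delta_R(a_j)}|f|^p\,dV_\alpha$, and let $N$ bound the overlap of the enlarged balls. Call $j$ good if $A_j^*\le2NA_j$. The bad balls carry at most $\frac1{2N}\sum_jA_j^*\le\frac12\|f\|_{p,\alpha}^p$. Since this is finite for $f\in\A^p_{K,\alpha}$, the good balls carry at least half the norm. On a good ball, normalize $\int_{B(0,r)}|f\circ\phi_{a_j}|^p\,du=1$. The doubling bound and the local supremum estimate then bound the family on compact subsets of $B(0,R)$. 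Hence it converges uniformly on $\overline{B(0,r)}$ to a nonzero limit, and only then does your discrete-zeros compactness argument work. This is Lemma~\ref{lem:localsmallness}, used in the proof of Theorem~\ref{thm:singularreverse}.

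For necessity, your radial regime is the paper's argument (Proposition~\ref{prop:radialrange} with Lemma~\ref{lem:concentration}), with two corrections. First, the admissible pole order is $\beta\in[1,K^{1/(n-1)}]$, not of size $K^{-1/(n-1)}$: the map $U(x-y)/|x-y|^{1+\beta}$, with $U$ orientation reversing, has outer distortion exactly $\beta^{n-1}$. Second, the uniform concentration you assert still needs proof. Write $t=\delta(a)$. One shows that for $r$ near $1$, $B\setminus\Delta_r(a)$ lies in $\{|x-a|>Lt\}\cup\{|x-a|\le Lt,\ \delta(x)<\varepsilon t\}$. The first piece is handled by the shell estimate, the second by $|h_a|^p\le Ct^{-Q}$ together with the thin-cap bound.

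The case $K\ge\Kstar$, $n\ge3$, is not proved, and this is exactly where the dimensional constant comes from. One needs maps with $|F(x)|=|x-(1+t)\omega|^{-\sigma}$ for arbitrarily large $\sigma$, because $p$ may be arbitrarily small, and with distortion independent of $\sigma$. Winding maps and radial stretches cannot supply this, since their distortion grows with the order. The paper conjugates a similarity by a Zorich map: $F=Z\circ A_\sigma\circ\Lambda\circ R_\omega(\cdot-y)$ with $y=(1+t)\omega$. Here $\Lambda$ is the inverse branch of $Z$ on the half-space $\{v_n>0\}$, which contains $R_\omega(B-y)$ even when $t=0$. The factor $A_\sigma=\sigma\operatorname{diag}(-1,1,\ldots,1,-1)$ has distortion one, so the distortion of $F$ is at most that of $Z$ times that of $\Lambda$. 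The identities $|Z(u,s)|=e^s$ and $\Lambda_n(v)=\log|v|$ then give the modulus.
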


In particular, relative density characterizes dominating sets in the plane
for every $K\ge1$ and $p>0$. In higher dimensions, the same conclusion holds
for every $p>0$ once $K$ is sufficiently large. Section~\ref{sec:dominating}
also gives reverse inequalities for more general measures.

For comparison, we turn to $K=1$ in dimensions $n\ge3$. By Liouville's
theorem, this class consists of M\"obius transformations and constants.
To describe the critical case, we use the boundary-cap proportions
\[
 \Theta_E(\omega,r)
 =\frac{V_\alpha(E\cap B(\omega,r))}
        {V_\alpha(B\cap B(\omega,r))},
 \qquad \omega\in\partial B,\quad 0<r<1.
\]

The following is our final main theorem, which gives a complete description of dominating sets in this
class. It also identifies when the closed unit ball is compact.

\begin{samepage}
\begin{theorem}\label{thm:mainconformal}
Suppose $n\ge3$, $\alpha>-1$, and $0<p<\infty$. A measurable set
$E\subset B$ is dominating for $\A^p_{1,\alpha}$ precisely under the
following conditions:
\begin{enumerate}
\item if $p<Q$, then $V_\alpha(E)>0$;
\item if $p=Q$, then, for one fixed $r_0\in(0,1)$,
\begin{equation}\label{eq:logdensity}
 \liminf_{d\downarrow0}\ \inf_{\omega\in\partial B}
 \frac{1}{\log(r_0/d)}
 \int_d^{r_0}\Theta_E(\omega,r)\,\frac{dr}{r}>0;
\end{equation}
\item if $p>Q$, then $E$ is relatively dense.
\end{enumerate}
Condition~\eqref{eq:logdensity} is independent of $r_0$. The closed unit
ball of $\A^p_{1,\alpha}$ is compact in its ambient $L^p_\alpha$ metric
if and only if $p<Q$.
\end{theorem}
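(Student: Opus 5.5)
The plan is to make everything explicit. By Liouville's theorem every $f\in\A^p_{1,\alpha}$ is constant, a similarity $x\mapsto c+\lambda Rx$, or
\[
 f(x)=c+\lambda R\,\frac{x-b}{|x-b|^{2}},\qquad R\in O(n),\ \lambda>0,\ |b|\ge1 .
\]
The pole cannot lie in $B$, since $f$ is continuous into $\R^n$. The basic computation is that, for a boundary point $\omega$ and $b=(1+t)\omega$ with $t\ge0$,
\[
 \int_B|x-b|^{-p}\,dV_\alpha(x)\asymp\int_t^{1}s^{-p}s^{\alpha}s^{n-1}\,\frac{ds}{s}\cdot s\asymp
 \begin{cases}1,&p<Q,\\ \log(2/t),&p=Q,\\ t^{Q-p},&p>Q,\end{cases}
\]
with the obvious modification at $t=0$. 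Here we use polar coordinates about $\omega$; the cap $B\cap B(\omega,s)$ has $V_\alpha$-measure $\asymp s^{Q}$, and $|x-b|\asymp s+t$. Thus poles on $\partial B$ are admissible exactly when $p<Q$. To pass from $|x-b|^{-p}$ to $|f|^p$, I will prove a comparison lemma. On the set where $|\lambda|x-b|^{-1}|\ge2|c|$ or $\le|c|/2$ one has $|f|\asymp|c|+\lambda|x-b|^{-1}$. The remaining ``cancellation shell'' $\{\lambda/|x-b|\asymp|c|\}$ is a region around a sphere centred at $b$. I expect this lemma to be the main technical obstacle. It must show that the shell carries at most a fixed fraction of the mass of $(|c|+\lambda|x-b|^{-1})^p\,dV_\alpha$, uniformly. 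It must also show that $|f|\ge\tfrac12(|c|+\lambda|x-b|^{-1})$ on a large portion of the shell, away from the unique zero of $f$, which lies in a ball of radius proportional to that of the shell. I would try a direct computation after a Möbius normalization sending the zero of $f$ to $\infty$, or failing that, split into the cases $|c|\lesssim\lambda$ and $|c|\gg\lambda$.

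\emph{Case $p<Q$ and compactness.} Normalize $\|f\|_{p,\alpha}=1$. The parameters $(c,\lambda,R,b)$ then range over a set whose closure in the class is compact. This uses $|b|\ge1$, with $b=\infty$ giving similarities, and the uniform bound $\sup_b\int|x-b|^{-p'}dV_\alpha<\infty$ for some $p<p'<Q$. That bound yields uniform integrability, so $f\mapsto\int_E|f|^p\,dV_\alpha$ is continuous on the normalized set, and the $L^p_\alpha$-closure of the unit ball is compact. A nonzero Möbius map vanishes at no more than one point, so $\int_E|f|^p>0$ whenever $V_\alpha(E)>0$. Compactness then gives a uniform lower bound, which is domination. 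Conversely, if $V_\alpha(E)=0$ domination fails trivially. For $p\ge Q$, take $f_t=\lambda_t(x-b_t)/|x-b_t|^2$ with $b_t=(1+t)\omega$ and $\lambda_t$ normalizing. The computation above shows $f_t\to0$ locally uniformly while $\|f_t\|=1$. No subsequence converges in $L^p_\alpha$, because any limit would be $0$, so the unit ball is not compact.

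\emph{Case $p>Q$.} Sufficiency is Theorem~\ref{thm:maindomination}. For necessity, suppose $E$ is dominating but not relatively dense. Then for every $r<1$ there are points $a_k$ with $V_\alpha(E\cap\Delta_r(a_k))/V_\alpha(\Delta_r(a_k))\to0$. Since $V_\alpha(E)>0$ is forced, the points $a_k$ tend to $\partial B$. Write $a_k=(1-t_k)\omega_k$ and test against $f_k$ with pole $(1+t_k)\omega_k$. Since $p>Q$, the normalized density $|f_k|^p\,dV_\alpha/\|f_k\|^p$ is bounded by $Ct_k^{p-Q}|x-b_k|^{-p}\delta^\alpha$. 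That density is bounded by a multiple of $t_k^{-Q}\delta^{\alpha}$ on $\Delta_r(a_k)$, and outside $\Delta_r(a_k)$ its mass is at most $\varepsilon(r)\to0$ as $r\to1$. Choosing $r$ so that $\varepsilon(r)$ is small gives $\int_E|f_k|^p\to$ something below $1/C_E$, a contradiction.

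\emph{Case $p=Q$.} Here only poles outside $\overline B$ occur. For $b=(1+d)\omega$, each dyadic shell $\{|x-\omega|\asymp s\}$ with $d<s<r_0$ carries mass $\asymp1$ of $|x-b|^{-Q}\,dV_\alpha$, so the total is $\asymp\log(r_0/d)$. Summing over shells, and using the comparison lemma to handle $c$, gives
\[
 \frac{\int_E|f|^Q\,dV_\alpha}{\|f\|_{Q,\alpha}^Q}\asymp\frac{1}{\log(r_0/d)}\int_d^{r_0}\Theta_E(\omega,r)\,\frac{dr}{r}+O\Bigl(\frac1{\log(r_0/d)}\Bigr)
\]
for pole-dominated $f$ with $d$ small. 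The error term comes from the shells $s\lesssim d$ and $s\gtrsim r_0$. Maps with $d$ bounded below, or with the constant dominating, are handled by the compactness argument of the case $p<Q$, which needs $V_\alpha(E)>0$; that is implied by \eqref{eq:logdensity}. Hence \eqref{eq:logdensity} is sufficient, and testing $f$ with poles $(1+d_k)\omega_k$ along a sequence violating \eqref{eq:logdensity} shows it is necessary. Independence of $r_0$ follows because changing $r_0$ alters the integral by $O(1)$ while $\log(r_0/d)\to\infty$.
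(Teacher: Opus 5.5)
Your pure-pole integrals, the noncompactness for $p\ge Q$, the case $p>Q$, critical necessity (tested on pure poles, where no translation occurs) and the independence of $r_0$ all match the paper. The gap is the translation term.

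\textbf{Your comparison lemma is false as stated.} It is formulated uniformly over all M\"obius maps, and in that form it fails. Take $b_k=ke_1$, $\lambda_k=k^2$, $c_k=\lambda_kRb_k/|b_k|^2$ with $R$ orientation reversing. Then $f_k(0)=0$, and the inversion identity $|f(x)-f(z)|=\lambda|x-z|/(|x-b|\,|z-b|)$ gives $|f_k(x)|=k|x|/|x-ke_1|\to|x|$ uniformly on $B$. So $\|f_k\|_{p,\alpha}\asymp1$. Yet $|c_k|+\lambda_k|x-b_k|^{-1}\approx2k$ on $B$, and the whole ball lies in your cancellation shell $\{|x-b_k|\asymp k\}$.

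The same example refutes the claim that normalized maps have parameters in a set with compact closure, since $|c_k|,\lambda_k\to\infty$ and the limiting similarity appears only after cancellation. Without a uniform bound $|f(x)|\le C\|f\|_{p,\alpha}(1+|b|/|x-b|)$, the uniform integrability of $|x-b|^{-p'}$ says nothing about $|f|^p$. Three steps are therefore unproved: compactness for $p<Q$, the regime ``$d$ bounded below'' at $p=Q$, and critical sufficiency.

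In the last step, the ``constant dominating'' maps with $d\to0$ cannot be handled by compactness at all. Take $1+\lambda_kR(x-b_k)/|x-b_k|^2$ with $\lambda_k^Q\log(e/d_k)$ fixed and small. Its pole parts keep a fixed norm while tending to zero locally uniformly, so the family has no $d_p$-convergent subsequence.

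\textbf{The missing idea: anchor at the origin, not at $c$.} Let $S_f=\sup_{B(0,1/4)}|f|\le C\|f\|_{p,\alpha}$. The inversion identity with $z=0$, $x=e_1/8$ gives $\lambda\le CS_f|b|^2$, hence $|f(x)|\le CS_f(1+|b|/|x-b|)$. So $\sup_B|f|\le CS_f$ when $|b|\ge2$, and $\lambda+|c|\le CS_f$ when $1\le|b|\le2$. Your uniform-integrability argument for $p<Q$ then goes through.

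At $p=Q$ the argument goes as follows.
\begin{itemize}
\item The $S_f$-normalized maps are locally bounded. A contradiction argument using quasiregular compactness and the null zero set of the limit gives $S_f\le C_F\|f\|_{L^Q(F)}$ for $F=E\cap B(0,R)$ of positive measure.
\item Since $Q>1$, Minkowski's inequality yields $\lambda I_E^{1/Q}\le\|f\|_{L^Q(E)}+|c|V_\alpha(E)^{1/Q}$ and $\|f\|_{Q,\alpha}\le|c|V_\alpha(B)^{1/Q}+\lambda I_B^{1/Q}$. Here $I_E=\int_E|x-b|^{-Q}\,dV_\alpha$ and $I_B$ is the same integral over $B$.
\item Together with $I_B\le C_EI_E$ (from the log-density condition) and the sup bound for $|b|\ge2$, this settles every case. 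No split by which term dominates is needed.
\end{itemize}
Restricted to $1\le|b|\le2$, your comparison can actually be proved from the exact formula $|f(x)|=|c|\,|x-x_0|/|x-b|$, where $x_0$ is the zero of $f$. The far poles still need the anchoring step.

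\textbf{Two smaller points.}
\begin{itemize}
\item The inversion must be composed with an orientation-reversing orthogonal map to be $1$-quasiregular. This includes your test maps $f_t$.
\item $V_\alpha(E)>0$ does not force $a_k\to\partial B$. This is not needed, because the concentration estimate is uniform in $a\in B$.
\end{itemize}
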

\end{samepage}

At the critical exponent, condition~\eqref{eq:logdensity} is weaker than
relative density and permits arbitrarily large gaps in hyperbolic distance.
For every fixed $K>1$, however, relative density again characterizes
domination at $p=Q$; see Corollary~\ref{cor:criticaldistortion}.

The rest of the paper is organised as follows. Section~\ref{sec:local} collects the local estimates and geometric facts
used in the proofs. The averaged-derivative theorem is proved in
Section~\ref{sec:norms}. Sections~\ref{sec:concentration}--\ref{sec:distortion}
treat concentration, reverse inequalities, and the conformal class.
The remaining sections give the related forward embeddings and metric
properties. The Zorich parametrization used to construct the poles is
included in Appendix~\ref{app:zorich}.

\section{Local estimates and the geometry of the ball}\label{sec:local}

Throughout the paper, $C$ denotes a positive constant whose value may change from line to line. Its dependence is indicated in each statement. The oscillation of a vector-valued mapping on $E$ is
\[
 \osc_E f=\sup\{|f(x)-f(y)|:x,y\in E\}.
\]
We will repeatedly use the elementary inequality
\begin{equation}\label{eq:elementarypower}
 |u+v|^s\le c_s(|u|^s+|v|^s),\qquad
 c_s=\max\{1,2^{s-1}\},\qquad s>0.
\end{equation}

\subsection{Interior estimates}

We begin with the local supremum and H\"older estimates.

\begin{lemma}\label{lem:local}
Let $f$ be $K$-quasiregular on a domain $\Omega\subset\R^n$. For each $s>0$ and every ball $B(a,2r)\Subset\Omega$,
\begin{equation}\label{eq:localsup}
 \sup_{B(a,r)}|f|
 \le C\left(\avint_{B(a,2r)}|f|^s\dd x\right)^{1/s}.
\end{equation}
There are $\vartheta=\vartheta(n,K)>0$ and $C=C(n,K,s)$ such that
\begin{equation}\label{eq:localholder}
 |f(x)-f(y)|
 \le C\left(\frac{|x-y|}{r}\right)^\vartheta
 \left(\avint_{B(a,2r)}|f|^s\dd z\right)^{1/s},
 \qquad x,y\in B(a,r/2).
\end{equation}
The same statements hold after subtracting a constant vector from $f$.
\end{lemma}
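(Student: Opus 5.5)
Both estimates of Lemma~\ref{lem:local} come from standard facts about quasiregular mappings, made quantitative by scaling. After the normalization $x\mapsto a+rx$, which preserves $K$-quasiregularity and leaves averages and the ratio $|x-y|/r$ unchanged, it suffices to treat $a=0$, $r=1$, so that $f$ is $K$-quasiregular on a neighbourhood of $\overline{B(0,2)}$. The final sentence of the lemma is immediate, since $f-v$ is $K$-quasiregular for every constant vector $v$. I also assume $f$ is nonconstant; for constant $f$ both estimates are trivial.

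For \eqref{eq:localsup}, the key input is that $|f|$ is subharmonic-like in the nonlinear sense. The function $u=\log|f|$ is a subsolution of an $\mathcal A$-harmonic equation of type $n$, with structure constants depending on $n,K$. Equivalently, $|f|^{\epsilon}$ is $\mathcal A$-subharmonic for small exponents, by the Reshetnyak / Granlund--Lindqvist--Martio theory. Moser iteration, or the weak Harnack inequality for subsolutions, then gives
\[
 \sup_{B(0,1)}|f|\le C\Bigl(\avint_{B(0,3/2)}|f|^{s_0}\Bigr)^{1/s_0}
\]
for some exponent $s_0$. To reach every $s>0$, including very small $s$, I would use the standard covering/absorption argument. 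Apply the estimate on balls $B(x,\tau)$ with $\tau=(t-t')$, for $t'<t\le 2$. Write $|f|^{s_0}\le(\sup_{B_t}|f|)^{s_0-s}|f|^s$ when $s<s_0$, and use Young's inequality. This yields
\[
 M(t')\le \tfrac12M(t)+C(t-t')^{-n/s}\|f\|_{L^s(B_2)},
\]
where $M(t)=\sup_{B_t}|f|$. The iteration lemma then removes the $M$ term. For $s\ge s_0$ the claim follows from Hölder's inequality.

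Alternatively, and more elementarily, one can avoid PDE and use that $|f|$ is monotone-like. The maximum principle via openness gives $\sup_{\partial B(x,t)}|f|\ge|f(x)|$, and combined with the modulus/capacity estimate of the Väisälä--Martio type this gives the same conclusion. I would take the PDE route since it is cleaner.

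For \eqref{eq:localholder}, I would invoke the local Hölder continuity of quasiregular maps with exponent $\vartheta=K^{1/(1-n)}$ (outer distortion), or simply the De Giorgi--Nash--Moser Hölder estimate for the $\mathcal A$-harmonic coordinate-type equation. The quantitative form I need is
\[
 |f(x)-f(y)|\le C|x-y|^{\vartheta}\,\osc_{B(0,1)}f\quad(x,y\in B(0,1/2)).
\]
This follows from the capacity-of-condensers argument: the oscillation of $f$ on $B(x,\rho)$ is controlled by $\bigl(\log(1/\rho)\bigr)^{-1/n}$ times the Dirichlet energy $\bigl(\int_{B(0,1)}|Df|^n\bigr)^{1/n}$. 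A Caccioppoli estimate bounds this energy by $C\,\osc_{B(0,3/2)}f$, and the power-decay version comes from iterating the oscillation decay $\osc_{B(x,\rho/2)}f\le\lambda\,\osc_{B(x,\rho)}f$ with $\lambda=\lambda(n,K)<1$. Finally, $\osc_{B(0,3/2)}f\le 2\sup_{B(0,3/2)}|f|$, and \eqref{eq:localsup}, applied with slightly adjusted radii through a covering by small balls, bounds this supremum by $C(\avint_{B(0,2)}|f|^s)^{1/s}$. Scaling back gives the factor $(|x-y|/r)^\vartheta$.

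The main obstacle is the small-$s$ range of \eqref{eq:localsup}, since quasiregular maps are not linear and $|f|^s$ is not subharmonic for small $s$ in any classical sense. The absorption/iteration step above is exactly where this is resolved, and it requires care to keep the constants dependent only on $n,K,s$.
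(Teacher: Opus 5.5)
Your proposal is correct, but it reaches both estimates by a somewhat different route from the paper.

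The paper works with the coordinate functions $f_i$. These are genuine $\mathcal A$-harmonic solutions of type $n$, with structure constants depending only on $n,K$. The cited scalar local-boundedness estimate holds for every $s>0$ and applies to $f_i$ and $-f_i$ alike, so \eqref{eq:localsup} follows coordinatewise at once. The paper then obtains \eqref{eq:localholder} by citing the H\"older oscillation decay for these solutions, relative to $\sup|f|$ on an intermediate ball, and applying \eqref{eq:localsup}.

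You instead treat $|f|$ as an $\mathcal A$-subsolution, get the bound for one exponent $s_0$, and reach all $s>0$ by the Young/absorption iteration. You may simply take $|f|$ itself, since $|y|$ is $n$-subharmonic. By contrast, $\log|f|$ is $\mathcal A$-subharmonic but is not a Sobolev subsolution near the zeros of $f$. Your absorption step is sound and makes explicit what the paper's citation hides; your ``main obstacle'' is already settled by the theorem the paper quotes.

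For the H\"older part, your argument is more self-contained than the paper's. It combines openness, which gives $\osc_{B(x,t)}f=\osc_{\partial B(x,t)}f$, the Morrey estimate on spheres, which gives the $(\log(R/\rho))^{-1/n}$ modulus, and Lemma~\ref{lem:caccioppoli}. That lemma does not use Lemma~\ref{lem:local}, so there is no circularity. Note only that the logarithmic bound gives decay by a factor $1/2$ over some large fixed ratio $N(n,K)$, not over ratio $2$; that still suffices for power decay.

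One statement should be corrected. $K^{1/(1-n)}$ is the classical H\"older exponent for the \emph{inner} dilatation. Under the paper's outer-distortion convention one only has $K_I\le K^{n-1}$, which yields $\vartheta=1/K$. The choice $\vartheta=K^{-1/(n-1)}$ is false for $n\ge3$: the paper's example $|x|^{a-1}x$ has outer distortion $1/a$ and is H\"older of order exactly $a<a^{1/(n-1)}$ at the origin. The lemma asks only for some $\vartheta(n,K)>0$, which your capacity iteration produces anyway, so this does not affect the proof.
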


\begin{proof}
The coordinate functions of a quasiregular mapping solve $\mathcal A$-harmonic equations of type $n$, with structure constants controlled by $n$ and $K$. The supremum estimate follows from the scalar estimate discussed after Lemma~2.1 in~\cite{AG25}; see also~\cite[Theorem~3.34 and Chapter~14]{HKM}. Applying that scalar estimate to each coordinate and using equivalence of the Euclidean and coordinate norms in $\R^n$, we obtain \eqref{eq:localsup}.

For $x,y\in B(a,r/2)$, the local H\"older estimate bounds $|f(x)-f(y)|$ by $C(|x-y|/r)^\vartheta$ times the supremum of $|f|$ on an intermediate ball. An application of \eqref{eq:localsup} on a slightly larger ball then yields \eqref{eq:localholder}. This is the usual local H\"older theorem for quasiregular mappings; see~\cite[Chapter~6]{HKM} and~\cite[Section~3]{FN}. Finally, subtracting a constant leaves both sides of the distortion inequality unchanged.
\end{proof}

We use two further standard structural facts. A nonconstant quasiregular mapping is open and discrete; in particular, its inverse image of a point is a discrete set. A locally uniform finite limit of mappings satisfying \eqref{eq:distortion} satisfies the same inequality and belongs to $W^{1,n}_{\mathrm{loc}}$. The limit is either nonconstant quasiregular or constant. These are the open-and-discrete theorem and the compactness theorem of quasiregular mapping theory; see~\cite{Rickman}, in particular Theorem~VI.8.6 for the latter. It follows from these facts, Lemma~\ref{lem:local}, and the Arzel\`a--Ascoli theorem that a locally bounded fixed-$K$ family has locally uniformly convergent subsequences. All subsequent compactness arguments use this fixed distortion bound.

For the derivative estimates it is useful to have the following direct energy inequality.

\begin{lemma}\label{lem:caccioppoli}
If $f$ is $K$-quasiregular on $\Omega$, $b\in\R^n$, and $0\le\eta\in C_c^\infty(\Omega)$, then
\begin{equation}\label{eq:caccioppoli}
 \int_\Omega\eta^n|Df|^n\dd x
 \le (nK)^n\int_\Omega|f-b|^n|\nabla\eta|^n\dd x.
\end{equation}
\end{lemma}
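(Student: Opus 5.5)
The plan is the standard Caccioppoli argument, using the pointwise identity for the cofactor matrix of a Sobolev mapping. Set $g=f-b$, which is again $K$-quasiregular with $Dg=Df$ and $J_g=J_f$. By the distortion inequality \eqref{eq:distortion},
\[
 \int_\Omega\eta^n|Df|^n\dd x\le K\int_\Omega\eta^nJ_f\dd x,
\]
so it suffices to bound $\int\eta^nJ_f$. For this I will use the null-Lagrangian structure of the Jacobian. For $f\in W^{1,n}_{\mathrm{loc}}$ the rows of $\cof Df$ are divergence free in the distributional sense, which gives the expansion $J_f=\sum_j\partial_j\bigl(g_1(\cof Dg)_{1j}\bigr)$.

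Testing this identity against $\eta^n$ gives
\[
 \int\eta^nJ_f\dd x
 =-n\int\eta^{n-1}g_1\sum_j(\cof Dg)_{1j}\partial_j\eta\dd x .
\]
Since $|(\cof Dg)_{1\cdot}|\le|Dg|^{n-1}$ in operator norm, this yields
\[
 \int\eta^nJ_f\le n\int\eta^{n-1}|g||Df|^{n-1}|\nabla\eta|\dd x .
\]
A cleaner, rotation-invariant version is available. For each $x$, apply the identity after an orthogonal change of coordinates in the target. Alternatively, use the formula $J_f=\frac1n\operatorname{div}\bigl((\cof Dg)^{T}g\bigr)$, valid distributionally for $W^{1,n}$ mappings, and note $|(\cof Dg)^Tg|\le|Dg|^{n-1}|g|$. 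Either route gives
\[
 \int\eta^nJ_f\dd x\le\int\eta^{n-1}|g|\,|Df|^{n-1}|\nabla\eta|\dd x ,
\]
where the factor $n$ from differentiating $\eta^n$ cancels the $\frac1n$. Without the $\frac1n$ we keep the factor $n$, and this still matches the stated constant $(nK)^n$.

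Next I apply H\"older's inequality with exponents $n/(n-1)$ and $n$:
\[
 \int\eta^n|Df|^n\le nK\Bigl(\int\eta^n|Df|^n\Bigr)^{(n-1)/n}\Bigl(\int|g|^n|\nabla\eta|^n\Bigr)^{1/n}.
\]
The left side is finite, because $\eta$ has compact support and $f\in W^{1,n}_{\mathrm{loc}}$. If it vanishes there is nothing to prove. Otherwise I divide by its $(n-1)/n$ power and raise both sides to the $n$-th power, which gives \eqref{eq:caccioppoli} with constant $(nK)^n$.

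The main technical point is justifying the divergence identity $\int\eta^nJ_f=-\int\langle(\cof Dg)^Tg,\nabla(\eta^n)\rangle\cdot(\text{const})$ for merely $W^{1,n}_{\mathrm{loc}}$ maps. I will first verify it for smooth $g$, where it is the Piola identity $\operatorname{div}\cof Dg=0$ combined with $\sum_j(\cof Dg)_{ij}\partial_jg_k=\delta_{ik}J_g$. I will then approximate $g$ by mollifications $g_\varepsilon\to g$ in $W^{1,n}$ on $\supp\eta$. Both sides pass to the limit: $J_{g_\varepsilon}\to J_g$ in $L^1$, $\cof Dg_\varepsilon\to\cof Dg$ in $L^{n/(n-1)}$, and $g_\varepsilon\to g$ in $L^n$ locally. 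It remains to bound the contraction $|(\cof Dg)^Tg|$, or the one-row version, by $|Dg|^{n-1}|g|$ pointwise. This follows because the operator norm of the cofactor matrix is at most $|Dg|^{n-1}$, via the singular value decomposition.
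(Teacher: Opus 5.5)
Your proposal is correct and follows the paper's proof essentially step for step. Both arguments use the first-row cofactor identity $J_f=\sum_j\partial_j\bigl((f_1-b_1)C_{1j}\bigr)$, justified by mollification, then the bound of a cofactor row by $|Df|^{n-1}$, and finally H\"older's inequality followed by absorption. For the limit passage, your $L^n$ convergence of $g_\varepsilon$, paired with $L^{n/(n-1)}$ convergence of the cofactors, is enough by H\"older; the paper uses uniform convergence instead.

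Your side remark about a pointwise orthogonal change of target coordinates does not work, since the rotation cannot depend on $x$ inside a distributional identity. However, the identity $J_g=\frac1n\operatorname{div}\bigl((\cof Dg)^Tg\bigr)$ is valid and would even give the sharper constant $K^n$, which is not needed here.
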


\begin{proof}
Let $C_{1j}$ denote the entries in the first row of $\cof Df$. By the cofactor identity,
\[
 \sum_{j=1}^n\partial_j\bigl((f_1-b_1)C_{1j}\bigr)=J_f
\]
in distributions. For a smooth mapping, this follows by differentiating the determinant and using $\sum_j\partial_jC_{1j}=0$. For the present mapping, mollification on a neighborhood of $\supp\eta$ proves the identity: the derivatives converge in $L^n$, the cofactors in $L^{n/(n-1)}$, and the Jacobians in $L^1$. Since $f$ is continuous, its mollifications also converge uniformly on that neighborhood. We may therefore pass to the limit in the product with the cofactors.

Set $I=\int\eta^n|Df|^n$. Testing the identity with $\eta^n$ and using \eqref{eq:distortion}, we obtain
\[
 I\le K\int\eta^nJ_f
 =-nK\int\eta^{n-1}(f_1-b_1)
            \sum_j C_{1j}\partial_j\eta.
\]
The Euclidean norm of a row of $\cof Df$ is at most $|Df|^{n-1}$. Hence H\"older's inequality yields
\[
 I\le nK I^{(n-1)/n}
       \left(\int|f-b|^n|\nabla\eta|^n\right)^{1/n}.
\]
If $I=0$ there is nothing to prove. Otherwise, we divide by $I^{(n-1)/n}$ and take the $n$th power to obtain \eqref{eq:caccioppoli}.
\end{proof}

\subsection{Weighted volume near the boundary}

The exponent $Q=n+\alpha$ describes the weighted volume of a boundary cap.

\begin{lemma}\label{lem:cap}
For $\omega\in\partial B$ and $0<R\le1$,
\[
 V_\alpha(B\cap B(\omega,R))\asymp R^Q.
\]
Moreover, for every fixed $L\ge1$ there is $C_L$ such that, if $a\in B$, $t=\delta(a)$, and $0<\varepsilon<1/2$, then
\begin{equation}\label{eq:thincap}
 V_\alpha\bigl(\{x\in B:|x-a|<Lt,\ \delta(x)<\varepsilon t\}\bigr)
 \le C_L\varepsilon^{\alpha+1}t^Q.
\end{equation}
The constants depend only on the displayed parameters and on $n,\alpha$.
\end{lemma}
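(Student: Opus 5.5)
The plan is to reduce both claims to one-dimensional integrals by slicing in the variable $\delta(x)=1-|x|$, that is, by working in polar coordinates $x=(1-s)\theta$ with $s\in(0,1]$ and $\theta\in S^{n-1}$. Then $dV_\alpha=s^\alpha(1-s)^{n-1}\dd s\dd\sigma(\theta)$, and the factor $(1-s)^{n-1}$ is at most $1$ throughout.

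For the cap estimate, fix $\omega\in\partial B$ and $0<R\le1$. If $x\in B\cap B(\omega,R)$ then $s=\delta(x)\le|x-\omega|<R$. Moreover the angular variable satisfies $|\theta-\omega|\le|\theta-x|+|x-\omega|<s+R<2R$. This gives the upper bound
\[
V_\alpha(B\cap B(\omega,R))\le\int_0^R s^\alpha\dd s\cdot\sigma\bigl(\{\theta:|\theta-\omega|<2R\}\bigr)\le C R^{\alpha+1}R^{n-1}=CR^Q,
\]
using $\alpha>-1$ and the standard estimate $\sigma(\text{spherical cap of radius } \rho)\le C\rho^{n-1}$.

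For the lower bound, consider the points with $s<R/4$, $|\theta-\omega|<R/4$, and $s\le1/2$, the last condition keeping $(1-s)^{n-1}\ge2^{1-n}$. Such a point satisfies $|x-\omega|\le s+|\theta-\omega|<R/2$, so it lies in the cap. The set of these points has measure at least $c\int_0^{R/4}s^\alpha\dd s\cdot cR^{n-1}\ge cR^Q$, since $R\le1$ gives $R/4\le1/4$. The spherical-cap lower bound $\sigma\ge c\rho^{n-1}$ for $\rho\le2$ is standard.

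For \eqref{eq:thincap}, fix $L\ge1$, $a\in B$ with $t=\delta(a)$, and $0<\varepsilon<1/2$. Write $a=(1-t)\theta_a$ when $a\ne0$; if $a=0$, take any unit vector $\theta_a$. A point $x$ of the set satisfies $s<\varepsilon t$, and
\[
|\theta-\theta_a|\le|\theta-x|+|x-a|+|a-\theta_a|<s+Lt+t\le(L+2)t.
\]
Hence the measure of the set is at most
\[
\int_0^{\varepsilon t}s^\alpha\dd s\cdot\sigma\bigl(\{|\theta-\theta_a|<(L+2)t\}\bigr)\le\frac{(\varepsilon t)^{\alpha+1}}{\alpha+1}\,C\bigl((L+2)t\bigr)^{n-1}.
\]
This equals $C_L\varepsilon^{\alpha+1}t^Q$. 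When $(L+2)t\ge2$, the spherical cap is the whole sphere, but the bound $\sigma\le C\rho^{n-1}$ still holds trivially because its right-hand side exceeds $\sigma(S^{n-1})$ after adjusting $C$.

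There is no real obstacle here. The only care needed is to keep every constant dependent only on $n,\alpha,L$, and to use $\alpha>-1$ so that the integral of $s^\alpha$ converges at $0$. When $\alpha<0$, the weight $s^\alpha$ blows up, but only as $s\to0$, which the integral handles directly. Near the center, $s$ is close to $1$ and $s^\alpha$ is bounded, which matters only for the cap estimate when $R=1$, and there the upper bound already covers it.
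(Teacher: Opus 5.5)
Your proof is correct and follows essentially the paper's route: polar coordinates $x=(1-s)\theta$, the area bound for a spherical cap of radius comparable to $R$ (resp.\ $Lt$), and the depth integral of $s^\alpha$. The differences are minor. Using $\bigl|x-x/|x|\bigr|=\delta(x)$ together with the cap-area bound for all radii lets you avoid the paper's case splits $R<1/4$ and $Lt<1/4$. You also get the lower bound from a polar box rather than from the inscribed ball $B((1-R/2)\omega,R/8)$.
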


\begin{proof}
Consider first $R<1/4$. If $x=(1-u)\zeta\in B(\omega,R)$, where $\zeta\in\partial B$, then $0<u<R$ and $|\zeta-\omega|<2R$. The area of this spherical cap is at most $CR^{n-1}$. Using polar coordinates, we obtain the upper estimate
\[
 V_\alpha(B\cap B(\omega,R))
 \le CR^{n-1}\int_0^R u^\alpha\dd u
 =\frac{C}{\alpha+1}R^Q.
\]
For the lower estimate, the ball centered at $(1-R/2)\omega$ with radius $R/8$ lies in the cap, and $\delta(x)\asymp R$ on that ball. Its weighted volume is comparable to $R^Q$. The range $1/4\le R\le1$ follows from this fixed lower bound and $V_\alpha(B)<\infty$.

To prove \eqref{eq:thincap}, first suppose $Lt<1/4$. Then $|a|>3/4$. On the indicated set, $|x|>1/2$, and the possible directions $x/|x|$ form a spherical cap of radius at most $C Lt$. Its area is at most $C(Lt)^{n-1}$. The claimed bound now follows by integrating through the depths $0<u<\varepsilon t$. If $Lt\ge1/4$, we use the full boundary collar instead. Its weighted volume is at most $C(\varepsilon t)^{\alpha+1}$, and $t^{-(n-1)}\le(4L)^{n-1}$ absorbs the missing factor. This proves \eqref{eq:thincap} in both cases.
\end{proof}

\subsection{Hyperbolic balls}

We give the geometric details used to transfer local statements from a fixed ball to a ball near the boundary. For $a\in B$, define
\[
 \phi_a(u)=
 \frac{(1-|a|^2)(u+a)+|u+a|^2a}
 {1+2\langle a,u\rangle+|a|^2|u|^2}.
\]
This is an orientation-preserving conformal automorphism of $B$, with $\phi_a(0)=a$ and inverse $\phi_{-a}$; for $a=0$ it is the identity. These standard ball transformations are discussed in~\cite{Ahlfors,AG25}. Direct calculation gives
\begin{align}
 1-|\phi_a(u)|^2
 &=\frac{(1-|a|^2)(1-|u|^2)}
 {1+2\langle a,u\rangle+|a|^2|u|^2},\label{eq:automorphismdepth}\\
 J_{\phi_a}(u)
 &=\left(\frac{1-|a|^2}
 {1+2\langle a,u\rangle+|a|^2|u|^2}\right)^n.
 \label{eq:automorphismjac}
\end{align}
Also $|\phi_{-a}(x)|=\rho(a,x)$, so $\Delta_r(a)=\phi_a(B(0,r))$.

\begin{lemma}\label{lem:hyperbolic}
For each fixed $R<1$,
\begin{equation}\label{eq:weighttransport}
 \delta(\phi_a(u))\asymp\delta(a),\qquad
 \delta(\phi_a(u))^\alpha J_{\phi_a}(u)\asymp\delta(a)^Q,
 \qquad |u|\le R,
\end{equation}
uniformly in $a\in B$. In particular, for fixed $r\in(0,1)$,
\begin{equation}\label{eq:hyperbolicvolume}
 V_\alpha(\Delta_r(a))\asymp\delta(a)^Q,
 \qquad \diam\Delta_r(a)\le C_r\delta(a).
\end{equation}
If $0<r<R<1$, there is a sequence $(a_j)$ such that the balls $\Delta_r(a_j)$ cover $B$ and the balls $\Delta_R(a_j)$ have bounded overlap.
\end{lemma}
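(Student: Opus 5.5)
The estimates \eqref{eq:weighttransport} will come straight from the explicit formulas \eqref{eq:automorphismdepth} and \eqref{eq:automorphismjac}. The denominator is $N(u)=1+2\langle a,u\rangle+|a|^2|u|^2=|e+|a|u'|^2$-type expression. More simply, by Cauchy--Schwarz we have $(1-|a||u|)^2\le N(u)\le(1+|a||u|)^2$. For $|u|\le R$ this gives
\[
(1-R)^2\le N(u)\le 4.
\]
The factor $1-|u|^2$ lies in $[1-R^2,1]$. Hence \eqref{eq:automorphismdepth} gives $1-|\phi_a(u)|^2\asymp 1-|a|^2$. Since $1-|x|^2=\delta(x)(1+|x|)$ with $1\le1+|x|\le2$, we get $\delta(\phi_a(u))\asymp\delta(a)$. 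Likewise \eqref{eq:automorphismjac} gives $J_{\phi_a}(u)\asymp\delta(a)^n$. Multiplying the two, and raising the first to the power $\alpha$ (of either sign), yields $\delta(\phi_a(u))^\alpha J_{\phi_a}(u)\asymp\delta(a)^Q$. All constants depend only on $n,\alpha,R$.

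For \eqref{eq:hyperbolicvolume} I will use $\Delta_r(a)=\phi_a(B(0,r))$ and change variables:
\[
V_\alpha(\Delta_r(a))=\int_{B(0,r)}\delta(\phi_a(u))^\alpha J_{\phi_a}(u)\,du\asymp\delta(a)^Q|B(0,r)|.
\]
For the diameter, note that $|D\phi_a(u)|=J_{\phi_a}(u)^{1/n}\le C\delta(a)$ on $B(0,r)$, since $\phi_a$ is conformal. Because $B(0,r)$ is convex, $\phi_a$ is $C\delta(a)$-Lipschitz there. Thus $\diam\Delta_r(a)\le 2rC\delta(a)$.

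For the covering I will use that $\rho$ is a metric on $B$. Indeed, $\rho$ is the pseudo-hyperbolic distance, $\rho(a,x)=\tanh(h(a,x)/2)$ with $h$ the hyperbolic metric, and it is Möbius invariant. It is cleaner to work with $h$ directly, where $\Delta_r(a)$ is the hyperbolic ball of radius $s=2\arctanh r$. Choose a maximal family $(a_j)$ whose points are pairwise at hyperbolic distance at least $s$; Zorn's lemma is not needed, since $B$ is separable. By maximality the balls $\Delta_r(a_j)$ cover $B$. The hyperbolic balls of radius $s/2$ about the $a_j$ are then disjoint.

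For bounded overlap, fix $x$ and let $S=\{j: x\in\Delta_R(a_j)\}$, with $S_R=2\arctanh R$. For $j\in S$, the disjoint balls of radius $s/2$ about $a_j$ lie in the hyperbolic ball of radius $S_R+s/2$ about $x$. Compare invariant volumes $d\mu=(1-|y|^2)^{-n}\,dy$. Each small ball has $\mu$-measure equal to that of the ball of radius $s/2$ about $0$, by Möbius invariance, since $\mu$ is invariant (it is precisely the $\alpha=-n$ case of the transport computation). The big ball has $\mu$-measure equal to that of the ball of radius $S_R+s/2$ about $0$. Hence $\#S$ is bounded by the ratio of these two measures, which depends only on $n,r,R$.

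The only delicate point is justifying the triangle inequality and the invariance of $\rho$ and $\mu$. Both are standard for ball automorphisms; invariance of $\mu$ follows from \eqref{eq:automorphismdepth} and \eqref{eq:automorphismjac}. I would cite~\cite{Ahlfors} for them rather than reprove them.
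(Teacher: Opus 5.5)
Your proposal is correct and is essentially the paper's proof: the same bounds $(1-R)^2\le N(u)\le(1+R)^2$ on the common denominator in \eqref{eq:automorphismdepth}--\eqref{eq:automorphismjac}, change of variables for the volume, the conformal derivative bound for the diameter, and a maximal separated family in the hyperbolic metric with invariant-volume packing for bounded overlap. The only quibble is your parenthetical remark about Zorn's lemma. Separability is what makes a separated family countable, so that it is a sequence; avoiding Zorn's lemma really uses that closed hyperbolic balls are compact, which permits a direct inductive construction.
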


\begin{proof}
For $|u|\le R$, the denominator in \eqref{eq:automorphismdepth} and \eqref{eq:automorphismjac} lies between $(1-R)^2$ and $(1+R)^2$. Since $1-|a|^2\asymp\delta(a)$ and $1-|\phi_a(u)|^2\asymp\delta(\phi_a(u))$, it follows from these identities that \eqref{eq:weighttransport} holds. Integrating, we obtain the volume assertion. The diameter assertion also follows from the derivative bound in \eqref{eq:automorphismjac}; alternatively, the Euclidean center and radius of $\Delta_r(a)$ are
\[
 \frac{(1-r^2)a}{1-r^2|a|^2},\qquad
 \frac{r(1-|a|^2)}{1-r^2|a|^2}.
\]

For the covering, use the hyperbolic metric $d(a,x)=2\arctanh\rho(a,x)$. Choose a maximal $h$-separated sequence, with $0<h<2\arctanh r$. Maximality gives a covering by the $d$-balls of radius $h$, hence by $\Delta_r(a_j)$. The $d$-balls of radius $h/2$ centered at $a_j$ are disjoint. The volume element
\[
 d\tau(x)=\frac{\dd x}{(1-|x|^2)^n}
\]
is invariant under the ball automorphisms, by \eqref{eq:automorphismdepth} and \eqref{eq:automorphismjac}. Thus every $d$-ball of a fixed finite radius has the same positive finite $\tau$-volume. If a point belongs to several $\Delta_R(a_j)$, the disjoint balls of radius $h/2$ associated with their centers all lie in the ball about that point of radius $2\arctanh R+h/2$. Comparing invariant volumes bounds their number.
\end{proof}

An immediate consequence, used later, is the uniform local estimate
\begin{equation}\label{eq:hyperbolicsup}
 \sup_{\Delta_r(a)}|f|^p
 \le C\delta(a)^{-Q}
       \int_{\Delta_R(a)}|f|^p\,dV_\alpha,
 \qquad 0<r<R<1.
\end{equation}
Indeed, apply Lemma~\ref{lem:local} to $f\circ\phi_a$ on a finite covering of $\overline{B(0,r)}$ by balls compactly contained in $B(0,R)$, and then use \eqref{eq:weighttransport}. Precomposition by $\phi_a$ preserves the outer-distortion bound.

\subsection{Point evaluation and convergence}\label{sec:metric}

The following point-evaluation estimates follow from the local mean inequality; see Nolder~\cite[Corollary~1.6]{Nolder90} and Oleinik--Vuorinen~\cite[Corollary~6.2]{OV}.

\begin{proposition}\label{prop:evaluation}
For $f\in\A^p_{K,\alpha}$ and $a\in B$,
\begin{equation}\label{eq:localevaluation}
 |f(a)|^p\le C\delta(a)^{-Q}
 \int_{B(a,\delta(a)/2)}|f(x)|^p\,dV_\alpha(x).
\end{equation}
Consequently,
\begin{equation}\label{eq:evaluationgrowth}
 |f(a)|\le C\delta(a)^{-Q/p}\|f\|_{p,\alpha}.
\end{equation}
For each fixed $f\in\A^p_{K,\alpha}$,
\begin{equation}\label{eq:littleoh}
 \delta(a)^{Q/p}|f(a)|\longrightarrow0
 \quad\text{as }|a|\to1.
\end{equation}
\end{proposition}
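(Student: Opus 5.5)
The plan is to obtain all three assertions of Proposition~\ref{prop:evaluation} from the supremum estimate \eqref{eq:localsup} of Lemma~\ref{lem:local}, using the fact that the weight $\delta^\alpha$ is essentially constant on Whitney-type balls.

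\emph{Step 1: the local estimate \eqref{eq:localevaluation}.} Fix $a\in B$ and put $t=\delta(a)$. Apply \eqref{eq:localsup} with $s=p$ on the ball $B(a,2r)$, where $r=t/4$; this ball is compactly contained in $B$. We get
\[
 |f(a)|^p\le C\avint_{B(a,t/2)}|f|^p\dd x .
\]
On $B(a,t/2)$ we have $t/2\le\delta(x)\le 3t/2$, so $\delta(x)^\alpha\asymp t^\alpha$ with constants depending only on $\alpha$. This holds for either sign of $\alpha$. Since $|B(a,t/2)|\asymp t^n$, the average is bounded by
\[
 Ct^{-n-\alpha}\int_{B(a,t/2)}|f|^p\,dV_\alpha ,
\]
which is \eqref{eq:localevaluation}.

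\emph{Step 2: the growth bound \eqref{eq:evaluationgrowth}.} Extend the integral in \eqref{eq:localevaluation} to all of $B$ and take $p$th roots.

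\emph{Step 3: the little-oh statement \eqref{eq:littleoh}.} Rewrite \eqref{eq:localevaluation} as
\[
 \delta(a)^Q|f(a)|^p\le C\int_{B(a,\delta(a)/2)}|f|^p\,dV_\alpha .
\]
If $|a|>1-2\varepsilon$, then every $x\in B(a,\delta(a)/2)$ satisfies $\delta(x)<3\varepsilon$. Hence the right-hand side is at most
\[
 C\int_{\{\delta<3\varepsilon\}}|f|^p\,dV_\alpha .
\]
This tends to $0$ as $\varepsilon\to0$ by dominated convergence, because $|f|^p\in L^1(dV_\alpha)$ and the sets $\{\delta<3\varepsilon\}$ decrease to the empty set.

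\emph{Main point to check.} The only real step is that the constant in \eqref{eq:localsup} is scale-invariant, that is, independent of $r$. This follows from dilation invariance of the distortion inequality. Apply the estimate to $x\mapsto f(a+rx)$, which is again $K$-quasiregular, on the ball $B(0,2)$. Then the constant depends only on $n$, $K$ and $p$, and this holds for all $p>0$ as Lemma~\ref{lem:local} asserts.
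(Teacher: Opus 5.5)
Your proposal is correct and follows the paper's proof essentially step for step. Both apply \eqref{eq:localsup} with outer ball $B(a,\delta(a)/2)$, use $\delta\asymp\delta(a)$ there to pass to $dV_\alpha$, enlarge the integral to $B$, and bound the little-oh statement by the integral over the collar $\{\delta<3\delta(a)/2\}$; your remark on the scale invariance of the constant in \eqref{eq:localsup} simply makes explicit a point the paper leaves implicit.
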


\begin{proof}
On $B(a,\delta(a)/2)$ one has $\delta(x)\asymp\delta(a)$. Apply \eqref{eq:localsup} with outer radius $\delta(a)/2$ and exponent $p$. On replacing ordinary volume by $dV_\alpha$, we obtain \eqref{eq:localevaluation}. Enlarging the integral to $B$, we then deduce \eqref{eq:evaluationgrowth}.

The ball in \eqref{eq:localevaluation} lies in $\{x:\delta(x)<3\delta(a)/2\}$. The integral of $|f|^p$ over this boundary collar tends to zero as $\delta(a)\to0$, by integrability. This proves \eqref{eq:littleoh} for all approaches to the boundary.
\end{proof}

For later use, denote the size of point evaluation by
\[
 \mathcal E_{K,p,\alpha}(a)
 =\sup\{|f(a)|:f\in\A^p_{K,\alpha},\ \|f\|_{p,\alpha}\le1\}.
\]
Thus \eqref{eq:evaluationgrowth} gives
$\mathcal E_{K,p,\alpha}(a)\le C\delta(a)^{-Q/p}$.

We equip $\A^p_{K,\alpha}$ with the metric
\begin{equation}\label{eq:metric}
 d_p(f,g)=
 \begin{cases}
  \|f-g\|_{p,\alpha},&p\ge1,\\[1mm]
  \displaystyle\int_B|f-g|^p\,dV_\alpha,&0<p<1.
 \end{cases}
\end{equation}
For $p<1$, the triangle inequality follows from \eqref{eq:elementarypower} with $c_p=1$. Since our mappings are continuous, distance zero means equality everywhere. Thus no choice of representatives is needed inside the quasiregular class.

\begin{proposition}\label{thm:complete}
The metric space $(\A^p_{K,\alpha},d_p)$ is complete. If $f_j,f\in\A^p_{K,\alpha}$ and $d_p(f_j,f)\to0$, then $f_j\to f$ locally uniformly on $B$.
\end{proposition}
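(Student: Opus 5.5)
The proof has two parts: local uniform convergence from metric convergence, and completeness via a normal-family argument.

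\emph{Local uniform convergence.} The key observation is that $f_j-f$ need not be quasiregular, so Proposition~\ref{prop:evaluation} cannot be applied to the difference directly. Instead, I would work with the family $\{f_j\}$ itself.

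First, $d_p(f_j,f)\to0$ gives $\sup_j\|f_j\|_{p,\alpha}<\infty$. By \eqref{eq:evaluationgrowth}, the $f_j$ are then uniformly bounded on each compact subset of $B$.

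Next, by the compactness facts recalled after Lemma~\ref{lem:local} (Lemma~\ref{lem:local}, Arzel\`a--Ascoli, and Rickman's Theorem~VI.8.6), every subsequence of $(f_j)$ has a further subsequence converging locally uniformly to some $K$-quasiregular (or constant) map $g$.

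To identify $g$, note that $L^p_\alpha$ convergence gives a further subsequence converging to $f$ almost everywhere. Hence $g=f$ a.e., and since both are continuous, $g=f$ everywhere. Because every subsequence has a sub-subsequence converging locally uniformly to the same limit $f$, the whole sequence converges locally uniformly to $f$.

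Alternatively, one can use the H\"older estimate \eqref{eq:localholder} with a uniform bound to get equicontinuity directly. The subsequence route seems cleanest.

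\emph{Completeness.} Let $(f_j)$ be $d_p$-Cauchy. It is then Cauchy in $L^p_\alpha$, which is complete (a quasi-Banach space for $p<1$), so $f_j\to F$ in $L^p_\alpha$.

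Again the norms are bounded, so by \eqref{eq:evaluationgrowth} the family is locally uniformly bounded. The normal-family argument gives a subsequence converging locally uniformly to a map $g$ which is $K$-quasiregular or constant. In either case $g$ satisfies \eqref{eq:distortion} and is included in the class.

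A further a.e.\ convergent subsequence shows $g=F$ almost everywhere. Thus $g\in L^p_\alpha$, so $g\in\A^p_{K,\alpha}$, and $d_p(f_j,g)=d_p(f_j,F)\to0$. Fatou's lemma is not even needed here, since $F$ is already the $L^p$ limit.

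\emph{Main obstacle.} The only delicate point is that the limit stays inside the fixed-$K$ class, including the degenerate constant case. This is exactly what the cited compactness theorem supplies, together with the convention that constants belong to the class.

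One should also note that local uniform boundedness, which requires \eqref{eq:evaluationgrowth}, is what permits the use of the compactness theorem in both parts.
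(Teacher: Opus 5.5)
Your proof is correct. The completeness half is essentially the paper's argument: bounded norms give a locally uniformly convergent subsequence, its limit agrees almost everywhere with the $L^p_\alpha$ limit, and the compactness theorem (constants allowed) puts it in the class.

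The local-uniform-convergence half takes a different route. You argue softly: Arzel\`a--Ascoli gives subsequential locally uniform limits, each is identified with $f$ through an a.e.\ convergent subsequence, and the subsequence principle finishes.

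The paper argues directly. It sidesteps the non-quasiregularity of $f_j-f$ by applying \eqref{eq:localsup} to the quasiregular map $f_j-f(a)$, since translation by a fixed vector preserves the distortion bound. This yields $|f_j(a)-f(a)|^p\le Cr^{-n}\|f_j-f\|_{p,\alpha}^p+C\omega_f(r)^p$ uniformly for $a$ in a compact set. Here $\omega_f$ is the modulus of continuity of $f$ on a slightly larger compact set. Letting $j\to\infty$ and then $r\downarrow0$ gives the uniform convergence.

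The paper's route is quantitative and needs only the mean-value estimate \eqref{eq:localsup} and continuity of $f$. It uses no equicontinuity, no Arzel\`a--Ascoli, and no closure theorem. Your route gives no estimate and needs the H\"older bound \eqref{eq:localholder} for equicontinuity. It does not actually need Rickman's closure theorem in this part, since the identification with $f$ uses only continuity of the subsequential limit. In exchange, it avoids the translation trick and reuses exactly the machinery of your completeness argument.
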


\begin{proof}
We first prove the convergence assertion directly. In fact, only continuity of the proposed limit is needed. Let $E\Subset B$, and choose a compact neighborhood $E'\Subset B$ of $E$. For sufficiently small fixed $r>0$, every $B(a,r)$ with $a\in E$ is contained in $E'$. Define
\[
 \omega_f(r)=\sup\{|f(x)-f(y)|:x,y\in E',\ |x-y|\le r\}.
\]
The difference $f_j-f$ need not be quasiregular. Translation by the fixed vector $f(a)$ does preserve quasiregularity, so we use $f_j-f(a)$ instead. Applying \eqref{eq:localsup} to it and then using \eqref{eq:elementarypower}, we obtain
\begin{align*}
 |f_j(a)-f(a)|^p
 &\le Cr^{-n}\int_{B(a,r)}|f_j(x)-f(a)|^p\dd x\\
 &\le Cr^{-n}\|f_j-f\|_{p,\alpha}^p+C\omega_f(r)^p.
\end{align*}
The weight is bounded above and below on $E'$, and the constants are independent of $a$ and $j$. First let $j\to\infty$, and then let $r\downarrow0$. Uniform continuity on $E'$ shows that the convergence is uniform on $E$.

Now let $(f_j)$ be a Cauchy sequence in \eqref{eq:metric}. Since the ambient weighted $L^p$ space is complete, the sequence has a limit $g\in L^p_\alpha(B,\R^n)$. In particular, the sizes $\|f_j\|_{p,\alpha}$ are bounded. By Proposition~\ref{prop:evaluation} and Lemma~\ref{lem:local}, the sequence is locally uniformly bounded and equicontinuous. A diagonal Arzel\`a--Ascoli argument then allows us to pass to a subsequence that converges locally uniformly to a continuous mapping $h$. The quasiregular compactness theorem recalled in Section~\ref{sec:local} shows that $h$ satisfies the original distortion bound, with constants allowed.

The same subsequence converges to $g$ in $L^p_\alpha$. On each compact ball its uniform limit and its $L^p$ limit agree almost everywhere. Exhausting $B$, we obtain $h=g$ almost everywhere. Hence $h\in\A^p_{K,\alpha}$, and the whole sequence converges to $h$ in the metric. This proves completeness.
\end{proof}

Both the uniform distortion bound and the inclusion of constants are relevant. The mappings $f_j(x)=x/j$ converge to a constant. If the distortion is not fixed, the quasiconformal linear mappings
\[
 f_j(x)=(x_1,j^{-1}x_2,\ldots,j^{-1}x_n)
\]
converge uniformly to a nonconstant rank-one mapping. The latter has zero Jacobian and nonzero derivative, so it cannot satisfy \eqref{eq:distortion} with any finite $K$.
\section{Averaged derivatives and Jacobians}\label{sec:norms}

For $0<c<1$, define the local maximal function
\[
 \mathcal M_cf(x)=\sup_{y\in B(x,c\delta(x))}|f(y)|.
\]
This local maximal function has the same weighted integrability as $f$.

\begin{lemma}\label{thm:maximal}
For every fixed $c\in(0,1)$ and every $K$-quasiregular mapping $f$,
\[
 \int_B(\mathcal M_cf)^p\,dV_\alpha
 \asymp\int_B|f|^p\,dV_\alpha.
\]
The constants depend only on $n,K,p,\alpha,c$.
\end{lemma}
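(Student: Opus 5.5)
The lower bound $\int_B|f|^p\,dV_\alpha\le\int_B(\mathcal M_cf)^p\,dV_\alpha$ is immediate, since $x\in B(x,c\delta(x))$ gives $|f(x)|\le\mathcal M_cf(x)$ pointwise. All the work is in the upper bound. The plan is to dominate $\mathcal M_cf(x)$ pointwise by a local $L^p$ average of $f$ over a slightly larger Whitney-type ball, and then interchange the order of integration.

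\emph{Step 1: a pointwise bound.} Fix $x\in B$ and $y\in B(x,c\delta(x))$. Then $\delta(y)\ge(1-c)\delta(x)$. Choose a radius $s=\kappa\delta(x)$ with $\kappa=(1-c)/4$. The ball $B(y,2s)$ is compactly contained in $B$, and every $z\in B(y,2s)$ satisfies $\delta(z)\asymp\delta(x)$ with constants depending only on $c$. Applying \eqref{eq:localsup} with exponent $p$ on $B(y,2s)$ gives
\[
 |f(y)|^p\le C\delta(x)^{-n}\int_{B(y,2s)}|f|^p\dd z .
\]
Since $B(y,2s)\subset B(x,c'\delta(x))$ with $c'=c+2\kappa=(1+c)/2<1$, and $\delta(z)^\alpha\asymp\delta(x)^\alpha$ there, taking the supremum over $y$ yields
\[
 (\mathcal M_cf(x))^p\le C\delta(x)^{-Q}\int_{B(x,c'\delta(x))}|f|^p\,dV_\alpha .
\]
This is the same argument as in \eqref{eq:localevaluation}, only with the centre moved.

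\emph{Step 2: Fubini.} Integrate the pointwise bound against $dV_\alpha(x)$. This gives
\[
 \int_B(\mathcal M_cf)^p\,dV_\alpha\le C\int_B|f(z)|^p\left(\int_{\{x:\,|z-x|<c'\delta(x)\}}\delta(x)^{\alpha-Q}\dd x\right)dV_\alpha(z).
\]
To bound the inner integral, take $x$ with $|z-x|<c'\delta(x)$. Then $\delta(z)\ge(1-c')\delta(x)$ and $\delta(x)\ge\delta(z)/(1+c')$, so $\delta(x)\asymp\delta(z)$. It follows that $x\in B(z,C\delta(z))$. The integrand is therefore $\asymp\delta(z)^{\alpha-Q}=\delta(z)^{-n}$ on a set of Lebesgue measure at most $C\delta(z)^n$. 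The inner integral is thus bounded by a constant depending only on $n,\alpha,c$, which proves the upper bound.

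The constants depend on $n,K,p$ through \eqref{eq:localsup}, and on $\alpha,c$ through the comparability of $\delta$. The main point needing care is Step 1. The ball used in the mean-value inequality must stay at a distance $\asymp\delta(x)$ from $\partial B$ uniformly in $y$, which is why the enlarged parameter $c'<1$ is introduced. Measurability of $\mathcal M_cf$ is not an issue: $f$ is continuous and $\delta$ is continuous, so $\mathcal M_cf$ is lower semicontinuous.
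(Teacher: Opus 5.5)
Your proof is correct and follows the paper's argument. You bound $(\mathcal M_cf(x))^p$ pointwise by $C\delta(x)^{-Q}\int_{B(x,c'\delta(x))}|f|^p\,dV_\alpha$ via \eqref{eq:localsup}, then use Tonelli and the estimate $\int_{\{x:|x-z|<c'\delta(x)\}}\delta(x)^{-n}\,dx\le C$. The only difference is cosmetic: you apply the mean-value inequality at each $y$ with radius comparable to $(1-c)\delta(x)$, whereas the paper covers $B(x,c\delta(x))$ by boundedly many balls, so your version is slightly more direct.
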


\begin{proof}
The lower estimate follows from $\mathcal M_cf\ge|f|$. Choose $c<c'<1$. Cover $B(x,c\delta(x))$ by a bounded number of balls whose doubled balls lie in $B(x,c'\delta(x))$. Their radii are fixed multiples of $\delta(x)$. Lemma~\ref{lem:local} and comparability of the weight imply
\[
 (\mathcal M_cf(x))^p
 \le C\delta(x)^{-Q}
       \int_{B(x,c'\delta(x))}|f(y)|^p\,dV_\alpha(y).
\]
Integrating in $x$ and applying Tonelli's theorem, we see that it remains to prove
\[
 \int_{\{x:|x-y|<c'\delta(x)\}}\delta(x)^{-n}\dd x\le C.
\]
Indeed, since $\delta$ is Lipschitz, we have on this set
\[
 (1-c')\delta(x)<\delta(y)<(1+c')\delta(x).
\]
The set is therefore contained in a Euclidean ball about $y$ of radius $C\delta(y)$, and the integrand is at most $C\delta(y)^{-n}$. This proves the bound. Tonelli's theorem also shows that the argument is valid with infinite integrals.
\end{proof}

We are now ready to prove the averaged-derivative characterization (Theorem~\ref{thm:mainnorm}).

\begin{proof}[Proof of Theorem~\ref{thm:mainnorm}]
Fix $c\in(0,1)$ and put
\[
 G(x)=\delta(x)D_{\mathrm{av},c}f(x).
\]
All constants in this proof may depend on $c$, in addition to
$n,K,p,\alpha$. We first show that the Bergman integral controls the
right-hand side of \eqref{eq:mainnorm}. Choose $c'=(1+c)/2$ and a cutoff
which equals one on $B(x,c\delta(x))$, vanishes outside
$B(x,c'\delta(x))$, and has gradient bounded by $C_c/\delta(x)$.
Applying the Caccioppoli inequality with the constant vector equal to zero, we obtain
\[
 G(x)\le C\mathcal M_{c'}f(x).
\]
It now follows from the local maximal estimate and bounded evaluation at zero that
\[
 |f(0)|^p+\int_B G(x)^p\,dV_\alpha(x)
 \le C\|f\|_{p,\alpha}^p.
\]
This assertion is automatic if the right-hand side is infinite.

For the reverse inequality, suppose that $G\in L^p_\alpha(B)$.
The local supremum estimate first converts energy into oscillation.
Indeed, let $b_x$ be the average of $f$ on $B(x,c\delta(x))$.
Translation by $b_x$ preserves quasiregularity. Applying the local estimate
with exponent $n$, followed by the Poincar\'e inequality on this ball,
we obtain
\begin{equation}\label{eq:oscillationenergy}
 \begin{split}
 \osc_{B(x,c\delta(x)/2)}f
 &\le C\left(\avint_{B(x,c\delta(x))}|f-b_x|^n\dd y\right)^{1/n}\\
 &\le Cc\delta(x)
       \left(\avint_{B(x,c\delta(x))}|Df|^n\dd y\right)^{1/n}
 \le CG(x).
 \end{split}
\end{equation}
Poincar\'e's inequality is applied componentwise. Replacing the Euclidean
matrix norm by the operator norm changes only a dimensional constant.

We first use \eqref{eq:oscillationenergy} to control the fixed interior
ball. Set $h=c/16$, and let $u,v\in B(0,1/2)$ satisfy $|u-v|\le h$.
For every $z\in B(u,h)$, we have $z\in B(0,3/4)$ and hence
$\delta(z)>1/4$. Therefore
\[
 |u-z|<h,\qquad
 |v-z|<2h=\frac c8<\frac{c\delta(z)}2.
\]
Both points lie in the oscillation ball centered at $z$. Thus, by
\eqref{eq:oscillationenergy},
\[
 |f(u)-f(v)|^p\le CG(z)^p
 \qquad(z\in B(u,h)).
\]
The weight is bounded above and below by positive constants on
$B(0,3/4)$, and consequently
$V_\alpha(B(u,h))\asymp h^n$, uniformly in $u$.
Integrating the preceding inequality over $B(u,h)$ and taking
$p$th roots yields
\[
 |f(u)-f(v)|\le C\|G\|_{p,\alpha},
\]
where the dependence on $h$ is absorbed into the allowed dependence
on $c$.

Now divide the segment from zero to any $x\in B(0,1/2)$ into
pieces of length at most $h$. The number of pieces is bounded
in terms of $c$, and all subdivision points remain in $B(0,1/2)$.
Adding the successive differences, we obtain
\begin{equation}\label{eq:interiorcontrol}
 \sup_{B(0,1/2)}|f|
 \le |f(0)|+C\|G\|_{p,\alpha}.
\end{equation}
This last step uses only the ordinary triangle inequality in
$\R^n$, so the argument also applies when $p<1$.

To reach a point near the boundary, we use a radial chain whose step
length fits the oscillation balls in \eqref{eq:oscillationenergy}.
Set $\lambda=1+c/4$. For $x=r\omega$, where $1/2<r<1$ and
$\omega\in\partial B$, let
\[
 x_j=\bigl(1-\lambda^j(1-r)\bigr)\omega,
\]
and stop at the first index $J$ for which
$\lambda^J(1-r)\ge1/2$. Then $x_j\in B\setminus\overline{B(0,1/2)}$
for $j<J$, whereas
\[
 1-\lambda/2<|x_J|\le1/2.
\]
In particular, the last point lies in the interior ball and the chain
does not pass through the origin. Moreover,
\[
 |x_{j+1}-x_j|=(\lambda-1)\delta(x_j)
             =c\delta(x_j)/4<c\delta(x_j)/2.
\]
Applying \eqref{eq:oscillationenergy} with center $x_j$ and adding the
successive differences gives
\begin{equation}\label{eq:radialchain}
 |f(x)|\le \sup_{B(0,1/2)}|f|
                +C\sum_{j=0}^{J-1}G(x_j).
\end{equation}
If the last point lies on the sphere of radius $1/2$, the same bound
follows by continuity.

For $j\ge0$, define $H_j(x)=G(x_j)$ on
$\{x:\lambda^j\delta(x)<1/2\}$, and put $H_j(x)=0$ elsewhere.
The points $x_j$ used in this definition all have modulus greater than
$1/2$. Under the radial change of variables $y=x_j$, the angular
variable is unchanged and
\[
 \delta(x)=\lambda^{-j}\delta(y),\qquad
 d|x|=\lambda^{-j}d|y|,
 \qquad \frac{|x|^{n-1}}{|y|^{n-1}}\le2^{n-1}.
\]
Consequently, integration in polar coordinates leads to
\begin{equation}\label{eq:chainintegral}
 \int_B H_j(x)^p\,dV_\alpha(x)
 \le 2^{n-1}\lambda^{-j(\alpha+1)}
           \int_B G(y)^p\,dV_\alpha(y).
\end{equation}
The decay comes from the radial variable and the weight; its exponent
is $\alpha+1$.

If $0<p\le1$, it follows from subadditivity, \eqref{eq:chainintegral}, and Tonelli's theorem that
\[
 \int_B\left(\sum_{j\ge0}H_j\right)^p\,dV_\alpha
 \le 2^{n-1}\sum_{j\ge0}\lambda^{-j(\alpha+1)}
                     \|G\|_{p,\alpha}^p.
\]
If $p>1$, we apply Minkowski's inequality to a finite sum and obtain
\[
 \left\|\sum_{j=0}^mH_j\right\|_{p,\alpha}
 \le 2^{(n-1)/p}\sum_{j=0}^m
          \lambda^{-j(\alpha+1)/p}\|G\|_{p,\alpha}.
\]
Monotone convergence allows $m$ to tend to infinity. Both geometric
series converge because $\alpha>-1$. These estimates, together with
\eqref{eq:radialchain}, \eqref{eq:interiorcontrol}, and the finiteness of
$V_\alpha(B)$, imply
\[
 \|f\|_{p,\alpha}^p
 \le C\left(|f(0)|^p+\int_BG^p\,dV_\alpha\right).
\]
This proves the equivalence, including its assertion about finiteness.

Finally, quasiregularity and the singular-value inequality give
$J_f\le|Df|^n\le KJ_f$ almost everywhere. Since
$|B(x,c\delta(x))|=|B(0,1)|c^n\delta(x)^n$, one has
\[
 G(x)=\frac1{c|B(0,1)|^{1/n}}
       \left(\int_{B(x,c\delta(x))}|Df(y)|^n\dd y\right)^{1/n}.
\]
The asserted integrated-Jacobian formula follows on replacing $|Df|^n$ by $J_f$.
\end{proof}

The integrated Jacobian retains multiplicity. For a nonconstant quasiregular mapping, the area formula gives
\[
 \int_UJ_f(x)\dd x=\int_{\R^n}N(y,f,U)\dd y,
\]
where $N(y,f,U)$ counts preimages in $U$ almost everywhere; see~\cite{Rickman}. The Jacobian expression therefore measures local image volume with multiplicity.

The averaging in the theorem is important. For $0<a<1$, the radial mapping
\[
 f(x)=|x|^{a-1}x,\qquad f(0)=0,
\]
is bounded and has outer distortion $1/a$. Its radial singular value is $a|x|^{a-1}$ and its other singular values are $|x|^{a-1}$. Hence
\[
 \int_{B(0,1/2)}|Df|^p\dd x=\infty
 \quad\text{when }p\ge\frac{n}{1-a}.
\]
It follows that the unaveraged expression $\int_B\delta^p|Df|^p\,dV_\alpha$ cannot replace the derivative expression in an all-$p$ theorem.

Applying Theorem~\ref{thm:mainnorm} to $f-f(0)$, we obtain the centered form:
\[
 \int_B|f-f(0)|^p\,dV_\alpha
 \asymp\int_B\bigl(\delta D_{\mathrm{av},c}f\bigr)^p\,dV_\alpha.
\]
\begin{remark*}
The comparison of the derivative integrals for different fixed radii is the weighted radius-comparison theorem of Adamowicz and Caama\~no~\cite[Theorem~3.7]{AC}. In their notation $B_x=B(x,\delta(x)/2)$ and
\[
 a_{f,\lambda}(x)
 =\left(\frac{1}{|2B_x|}\int_{\lambda B_x}|Df|^n\,dy\right)^{1/n},
 \qquad a_{f,2c}=cD_{\mathrm{av},c}f.
\]
Take the weight there to be $u=\delta^{p+\alpha}$. It satisfies the required local comparability because $\delta(y)/\delta(x)$ lies between $1/2$ and $3/2$ on $B_x$. Their theorem therefore compares the derivative integrals above for any two fixed $c\in(0,1)$.
\end{remark*}

\section{Concentrated test mappings}\label{sec:concentration}

The converse results use mappings whose mass is concentrated near a prescribed point at the scale of its distance to the boundary. Such mappings can be obtained from the uniformly quasiregular power maps of Mayer~\cite[Theorem~2 and Section~5]{Mayer}; see~\cite[Section~3]{Nicks} for their all-dimensional formulation. We use an explicit construction through one inverse branch of a Zorich map. It allows every positive real pole order and gives the exact modulus needed for the estimates below. The underlying Zorich construction is included in Appendix~\ref{app:zorich}.

\begin{proposition}\label{thm:poles}
There is a constant $\Kstar\ge1$, with $K_*(2)=1$, such that the following holds. For every $\sigma>0$, $\omega\in\partial B$, and $t\ge0$, there is a $\Kstar$-quasiregular mapping $F_{\sigma,\omega,t}:B\to\R^n$ satisfying
\begin{equation}\label{eq:polemodulus}
 |F_{\sigma,\omega,t}(x)|=|x-(1+t)\omega|^{-\sigma}.
\end{equation}
The distortion bound is independent of $\sigma,\omega,t$.
\end{proposition}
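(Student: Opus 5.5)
The construction composes a fixed inverse Zorich branch with a Möbius inversion and a radial power map. The power map sets the exponent, and the inversion moves the singularity to $(1+t)\omega$.

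Let $\mathcal Z:\R^n\to\R^n\setminus\{0\}$ be the Zorich map from Appendix~\ref{app:zorich}. It is quasiregular with a dimensional distortion bound $K_Z(n)$, it is periodic in the first $n-1$ variables, and it satisfies the exact modulus identity
\[
 |\mathcal Z(y',y_n)|=e^{y_n}.
\]
Restricted to a beam $P\times\R$ over a fundamental cell $P$, it is injective onto a half-space-like region $W$. Its inverse branch $L=\mathcal Z^{-1}|_W$ is then quasiconformal with the same dimensional bound and satisfies
\[
 (L(z))_n=\log|z|.
\]

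For a given $\sigma$, we would define
\[
 F=\mathcal Z\circ(\text{scaling by }-\sigma \text{ in the last coordinate})\circ L\circ\tau,
\]
where $\tau$ is a Möbius map. Then
\[
 |F(x)|=\exp\bigl(-\sigma\log|\tau(x)|\bigr)=|\tau(x)|^{-\sigma}.
\]
The problem is that the linear map $(y',y_n)\mapsto(y',-\sigma y_n)$ has distortion depending on $\sigma$. To avoid this, the scaling should act on all coordinates:
\[
 y\mapsto -\sigma y \quad(\text{composed with a reflection if orientation must be fixed}).
\]
This map is conformal, and it still sends $y_n$ to $-\sigma y_n$. Hence
\[
 F=\mathcal Z\circ M_\sigma\circ L,
\]
with $M_\sigma$ a similarity, has distortion at most $K_Z\cdot K_Z$, independent of $\sigma$. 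Since $\mathcal Z$ is defined on all of $\R^n$, no domain issue arises after scaling, and
\[
 |F(z)|=|z|^{-\sigma}.
\]
We therefore set $\Kstar=K_Z(n)^2$ up to the outer-distortion composition rule. In the plane we replace this by the explicit choice
\[
 F(z)=(z-(1+t)\omega)^{-\sigma}
\]
on a branch, which is analytic on $B$, up to conjugation for orientation. That gives $K_*(2)=1$.

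It remains to place the singularity. The map $L$ is only defined on $W$, which omits a closed cone or slit from the origin. We need $x\mapsto x-(1+t)\omega$ to map $B$ into a region on which some branch is defined. The set $B-(1+t)\omega$ lies in the open half-space
\[
 \{z:\langle z,-\omega\rangle>0\}
\]
when $t>0$, and in its closure minus the origin when $t=0$. Choose the Zorich fundamental beam so that the image of the beam contains such a half-space. The standard Zorich map sends the half-beam over a square onto a half-space, with the $n$th direction playing the role of the log radius. Rotating so that this half-space is $\{\langle z,-\omega\rangle>0\}$, we then define
\[
 F_{\sigma,\omega,t}(x)=\mathcal Z\Bigl(-\sigma L\bigl(R_\omega(x-(1+t)\omega)\bigr)\Bigr),
\]
where $R_\omega$ is a rotation. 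Rotations and translations are conformal, so the distortion bound is uniform in $\omega$ and $t$. For $t=0$, the point $x=\omega$ is not in $B$, so the open ball still lies in the open half-space.

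The main obstacle is the branch domain. I must verify from the appendix that one injective branch of $\mathcal Z$ covers an entire open half-space, not just a quadrant-type region. If a single beam covers only a smaller cone, I would first precompose with a fixed quasiconformal folding map that takes the half-space onto that cone while preserving $|z|$, for example radial-angular stretching. This adds only a dimensional factor to $\Kstar$ and keeps the modulus identity exact.
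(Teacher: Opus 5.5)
Your construction is correct and is essentially the paper's: $F_{\sigma,\omega,t}(x)=Z\bigl(A_\sigma\Lambda(R_\omega(x-(1+t)\omega))\bigr)$, with a $\sigma$-dependent similarity placed between the inverse Zorich branch and the full Zorich map, which gives $K_*(n)=L_n^{4(n-1)}$, together with a holomorphic branch of $(z-(1+t)\omega)^{-\sigma}$ in the plane. The two details you left open are settled as follows: Lemma~\ref{lem:zorich} already defines $\Lambda$ on the whole half-space $H_+$, so no folding map is needed, and because $-\sigma I$ reverses orientation in odd dimensions the paper takes $A_\sigma=\sigma\operatorname{diag}(-1,1,\ldots,1,-1)$, which preserves orientation in every dimension and keeps the last coordinate equal to $-\sigma\log|x-(1+t)\omega|$.
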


\begin{proof}
In dimension two, the nonvanishing holomorphic function $z\mapsto z-(1+t)\omega$ has a holomorphic logarithm in the disk. Thus
\[
 F_{\sigma,\omega,t}(z)
 =\exp\bigl(-\sigma\log(z-(1+t)\omega)\bigr)
\]
has the required modulus, and $K_*(2)=1$.

For $n\ge3$, let $Z$ and $\Lambda$ be the maps in Lemma~\ref{lem:zorich}, proved in Appendix~\ref{app:zorich}. Put $y=(1+t)\omega$ and choose $R_\omega\in SO(n)$ with $R_\omega(-\omega)=e_n$. Since
\[
 \bigl(R_\omega(x-y)\bigr)_n=1+t-\langle x,\omega\rangle>0
 \qquad(x\in B),
\]
the inverse branch $\Lambda$ is defined at every $R_\omega(x-y)$, including when $t=0$. Set
\[
 A_\sigma=\sigma\operatorname{diag}(-1,1,\ldots,1,-1)
\]
and define
\begin{equation}\label{eq:poleconstruction}
 F_{\sigma,\omega,t}(x)
 =Z\bigl(A_\sigma\Lambda(R_\omega(x-y))\bigr).
\end{equation}
The map $A_\sigma$ is an orientation-preserving similarity: it reverses two coordinate signs and multiplies all coordinates by the same factor. Thus it has distortion one, independently of $\sigma$. By the chain rule, the outer distortion in \eqref{eq:poleconstruction} is at most the product of those of $Z$ and $\Lambda$. The inner maps are locally bi-Lipschitz, so the exceptional sets in the chain rule have null preimages. By Lemma~\ref{lem:zorich}, we may take
\[
 K_*(n)=\max\{3\pi/2,\,n-1+2\sqrt{n-1}\}^{4(n-1)}.
\]
Finally, the last coordinate of $A_\sigma\Lambda(R_\omega(x-y))$ is $-\sigma\log|x-y|$. Formula~\eqref{eq:zorichmodulus} now gives \eqref{eq:polemodulus}.
\end{proof}

\subsection{The size and concentration of the test mappings}

The following integral calculation determines the normalization.

\begin{lemma}\label{lem:poleintegral}
If $s>Q$, then
\begin{equation}\label{eq:poleintegral}
 \int_B|x-(1+t)\omega|^{-s}\,dV_\alpha(x)
 \asymp t^{Q-s},\qquad 0<t\le1,
\end{equation}
uniformly in $\omega\in\partial B$. For every $s>0$, a pole on the boundary satisfies
\begin{equation}\label{eq:boundarypoleintegral}
 \int_B|x-\omega|^{-s}\,dV_\alpha(x)<\infty
 \quad\Longleftrightarrow\quad s<Q.
\end{equation}
\end{lemma}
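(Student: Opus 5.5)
The plan is to reduce both assertions to a one-dimensional integral in the depth variable, after separating the region near the pole from the rest of the ball. Write $y=(1+t)\omega$. For $x\in B$ one has $|x-y|\ge \max\{t,\delta(x)\}$, and $|x-y|\asymp t+|x-\omega|$ uniformly. Away from $\omega$, say on $B\setminus B(\omega,1/2)$, the integrand is bounded above and below by constants, independently of $t\in[0,1]$. Since $V_\alpha(B)<\infty$, this region contributes $O(1)$ to both integrals. So everything reduces to the cap $B\cap B(\omega,1/2)$.

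On the cap, decompose dyadically with respect to the distance to $\omega$. For $k\ge1$, set $S_k=\{x\in B:2^{-k-1}\le|x-\omega|<2^{-k}\}$. By Lemma~\ref{lem:cap}, applied at radii $2^{-k}$ and $2^{-k-1}$, we have $V_\alpha(S_k)\le C2^{-kQ}$. For the lower bound, use the ball of radius $2^{-k}/16$ centered at $(1-3\cdot2^{-k-2})\omega$, on which $\delta\asymp 2^{-k}$; it lies in $S_k$, so $V_\alpha(S_k)\asymp 2^{-kQ}$. On $S_k$ we have $|x-y|\asymp t+2^{-k}$.

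For the boundary pole ($t=0$), this gives
\[
\int_{B\cap B(\omega,1/2)}|x-\omega|^{-s}\,dV_\alpha\asymp\sum_{k\ge1}2^{k(s-Q)}.
\]
The series is finite exactly when $s<Q$, which proves \eqref{eq:boundarypoleintegral}.

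For $0<t\le1$ and $s>Q$, split the sum at $2^{-k}\approx t$. The terms with $2^{-k}\le t$ have $|x-y|\asymp t$. Their total volume is that of $B\cap B(\omega,2t)$, which is $\asymp t^Q$ by Lemma~\ref{lem:cap}, so they contribute $\asymp t^{Q-s}$. The terms with $t<2^{-k}\le 1/2$ contribute
\[
\sum 2^{k(s-Q)}\le C t^{Q-s},
\]
a geometric sum dominated by its last term because $s>Q$. The outer region contributes $O(1)\le Ct^{Q-s}$, since $t\le1$ and $s>Q$. This gives the upper bound in \eqref{eq:poleintegral}. The lower bound comes from the single piece $B\cap B(\omega,t)$, where $|x-y|\le 2t$ and $V_\alpha\asymp t^Q$.

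All constants depend only on $n$, $\alpha$, $s$, and are uniform in $\omega$ by rotation invariance. The only step needing care is the lower volume bound for the annular pieces $S_k$ when $t=0$, since divergence requires each term to be genuinely comparable to $2^{k(s-Q)}$. The interior ball described above handles this. There is no real obstacle beyond that.
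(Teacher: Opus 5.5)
Your proposal is correct and follows essentially the paper's route. Both use dyadic shells with the cap estimate of Lemma~\ref{lem:cap} for the upper bounds, and interior balls at depth comparable to the scale for the lower bound and for divergence when $s\ge Q$. The only cosmetic difference is that the paper centers its shells at the pole $(1+t)\omega$, at radii $2^jt$, so it never needs the comparability $|x-(1+t)\omega|\asymp t+|x-\omega|$ that you use (correctly, though without proof).
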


\begin{proof}
Put $y=(1+t)\omega$. Since $|x-y|\ge t$ for $x\in B$, decompose the ball into shells where
\[
 2^jt\le|x-y|<2^{j+1}t,\qquad j\ge0.
\]
The part of a ball $B(y,R)$ in $B$, for $R\ge t$, lies in $B(\omega,2R)$. By Lemma~\ref{lem:cap}, its weighted volume is at most $CR^Q$; for $R\ge1$, the same bound follows from $V_\alpha(B)<\infty$. Thus
\[
 \int_B|x-y|^{-s}\,dV_\alpha
 \le C\sum_{j\ge0}(2^jt)^{Q-s}
 \le Ct^{Q-s}.
\]
For the lower bound, use $B((1-t)\omega,t/8)$. On this ball $\delta(x)\asymp t$ and $|x-y|\asymp t$, so its contribution is at least $ct^{Q-s}$.

For a boundary pole, the same dyadic estimate at radii tending to zero proves finiteness when $s<Q$. To prove divergence when $s\ge Q$, put $t_j=4^{-j}$ and use the pairwise disjoint balls
\[
 B((1-t_j)\omega,t_j/8).
\]
Their contributions are bounded below by $ct_j^{Q-s}$. These terms do not form a summable sequence when $s\ge Q$, which proves \eqref{eq:boundarypoleintegral}.
\end{proof}

Fix $\sigma>Q/p$ for the remainder of this section. If $a\ne0$, write $a=(1-t)\omega$ with $t=\delta(a)$; at $a=0$ take $t=1$ and $\omega=e_1$. Define
\[
 h_a=\frac{F_{\sigma,\omega,t}}{\|F_{\sigma,\omega,t}\|_{p,\alpha}}.
\]
These are $\Kstar$-quasiregular mappings of norm one. By \eqref{eq:polemodulus} and \eqref{eq:poleintegral},
\begin{equation}\label{eq:peakprofile}
 |h_a(x)|^p\asymp
 t^{\sigma p-Q}|x-(1+t)\omega|^{-\sigma p}.
\end{equation}
In particular,
\begin{equation}\label{eq:peakbounds}
 |h_a(x)|^p\le Ct^{-Q}\quad(x\in B),\qquad
 |h_a(x)|^p\ge c_rt^{-Q}\quad(x\in\Delta_r(a)).
\end{equation}
For the first estimate use $|x-(1+t)\omega|\ge t$. For the second, use
\[
 |x-(1+t)\omega|\le|x-a|+2t\le(C_r+2)t
\]
and Lemma~\ref{lem:hyperbolic}. Moreover,
\begin{equation}\label{eq:peaklocalzero}
 h_a\longrightarrow0\quad\text{locally uniformly as }|a|\to1,
\end{equation}
because $\sigma p-Q>0$ and the exterior poles stay a positive distance from any fixed compact subset of $B$.

The lower bound on a fixed hyperbolic ball is enough for the necessary measure condition. For dominating sets we also need uniform control of the mass outside larger hyperbolic balls.

\begin{lemma}\label{lem:concentration}
The normalized mappings just defined satisfy
\begin{equation}\label{eq:uniformconcentration}
 \lim_{r\uparrow1}\sup_{a\in B}
       \int_{B\setminus\Delta_r(a)}|h_a|^p\,dV_\alpha=0.
\end{equation}
\end{lemma}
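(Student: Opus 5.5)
The plan is to use the explicit profile \eqref{eq:peakprofile}. This reduces \eqref{eq:uniformconcentration} to a uniform tail estimate for the kernel $t^{\sigma p-Q}|x-(1+t)\omega|^{-\sigma p}$, where $s:=\sigma p>Q$. Write $y=(1+t)\omega$ and $a=(1-t)\omega$. The key step is a geometric one: the complement of a large hyperbolic ball about $a$ is contained in the set where $|x-y|$ is large compared with $t$, or else where $\delta(x)$ is small compared with $t$. Concretely, for $L\ge1$ and $0<\varepsilon<1/2$, I would show there is $r(L,\varepsilon)<1$ such that
\[
 B\setminus\Delta_r(a)\subset\{x\in B:|x-y|\ge Lt\}\cup\{x\in B:|x-a|<(L+2)t,\ \delta(x)<\varepsilon t\}
 \qquad(r\ge r(L,\varepsilon)).
\]
Equivalently, the points with $|x-y|<Lt$ and $\delta(x)\ge\varepsilon t$ lie in $\Delta_r(a)$ for $r$ close to $1$. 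For such $x$ we have $|x-a|\le(L+2)t$ and $1-|x|^2\ge\varepsilon t$, and $1-|a|^2\ge t$. The formula for $\rho$ then gives
\[
 \rho(a,x)^2\le\frac{(L+2)^2t^2}{(L+2)^2t^2+\varepsilon t^2}=\frac{(L+2)^2}{(L+2)^2+\varepsilon}<1.
\]
This bound is uniform in $a$, which handles the case $a=0$ (where $t=1$) as well.

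The two pieces are then estimated separately. On the far piece $\{|x-y|\ge Lt\}$, I repeat the dyadic shell argument from Lemma~\ref{lem:poleintegral}, starting the shells at $2^jLt$ instead of $t$. Using Lemma~\ref{lem:cap}, each shell has weighted volume $\le C(2^jLt)^Q$, so
\[
 t^{s-Q}\int_{\{|x-y|\ge Lt\}}|x-y|^{-s}\,dV_\alpha\le Ct^{s-Q}\sum_{j\ge0}(2^jLt)^{Q-s}=CL^{Q-s}.
\]
This tends to $0$ as $L\to\infty$, uniformly in $a$, because $s>Q$. On the thin piece, I use the pointwise bound $|h_a|^p\le Ct^{-Q}$ from \eqref{eq:peakbounds} together with \eqref{eq:thincap} applied with parameter $L+2$. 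This gives a contribution at most $C_{L+2}\varepsilon^{\alpha+1}$, which tends to $0$ as $\varepsilon\to0$ for fixed $L$, since $\alpha>-1$. A small technical point is that \eqref{eq:thincap} is stated with $t=\delta(a)$; this matches our normalization, including $a=0$ with $t=1$.

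Finally, I would choose the parameters in order. Given $\eta>0$, first pick $L$ so that $CL^{Q-s}<\eta/2$. Then pick $\varepsilon$ so that $C_{L+2}\varepsilon^{\alpha+1}<\eta/2$. Then take $r\ge r(L,\varepsilon)$. Since $\Delta_r(a)$ increases with $r$, the supremum over $a$ is at most $\eta$ for all larger $r$, which proves \eqref{eq:uniformconcentration}. I expect the main obstacle to be the geometric inclusion, because it must hold uniformly in $a$, and in particular near the boundary, where Euclidean and hyperbolic scales differ. The direct estimate of $\rho$ above appears to settle this cleanly. The only other thing to verify is that the implied constants in \eqref{eq:peakprofile} are uniform in $a$, which follows from \eqref{eq:poleintegral}.
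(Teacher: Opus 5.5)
Your proposal is correct and follows essentially the same route as the paper. It splits the complement of $\Delta_r(a)$ into a far region, bounded by $CL^{Q-s}$ via the dyadic shell estimate, and a thin region near the boundary, bounded via $|h_a|^p\le Ct^{-Q}$ and \eqref{eq:thincap}. The remaining points lie in a fixed $\Delta_r(a)$ by the explicit bound on $\rho(a,x)$, with $L$ chosen before $\varepsilon$. The only difference is cosmetic: you split according to $|x-(1+t)\omega|\ge Lt$ rather than $|x-a|>Lt$, which replaces $L$ by $L+2$ in the thin-cap and $\rho$ estimates.
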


\begin{proof}
Set $s=\sigma p>Q$ and $t=\delta(a)$. Let $L>4$. If $|x-a|>Lt$, then
\[
 |x-(1+t)\omega|\ge|x-a|-2t>\tfrac12Lt.
\]
The shell calculation in Lemma~\ref{lem:poleintegral}, together with \eqref{eq:peakprofile}, gives
\begin{equation}\label{eq:peakfar}
 \int_{\{|x-a|>Lt\}}|h_a|^p\,dV_\alpha
 \le C L^{Q-s}.
\end{equation}
On the region $|x-a|\le Lt$, $\delta(x)<\varepsilon t$, we apply the first estimate in \eqref{eq:peakbounds} and \eqref{eq:thincap} to obtain
\begin{equation}\label{eq:peakthin}
 \int_{\{|x-a|\le Lt,\ \delta(x)<\varepsilon t\}}|h_a|^p\,dV_\alpha
 \le C_L\varepsilon^{\alpha+1}.
\end{equation}
The remaining points satisfy $|x-a|\le Lt$ and $\delta(x)\ge\varepsilon t$. Since $1-|a|^2\ge t$ and $1-|x|^2\ge\varepsilon t$, it follows from the definition of $\rho$ that
\[
 \rho(a,x)^2\le\frac{L^2}{L^2+\varepsilon}.
\]
Choose a fixed $r<1$ larger than the square root of the last expression. The remaining region lies in $\Delta_r(a)$, uniformly in $a$. First choose $L$ so large that \eqref{eq:peakfar} is small, and then choose $\varepsilon$ so small that \eqref{eq:peakthin} is small. This proves \eqref{eq:uniformconcentration}.
\end{proof}

\begin{corollary}\label{cor:sharpevaluation}
If $K\ge\Kstar$, then
\[
 \mathcal E_{K,p,\alpha}(a)\asymp\delta(a)^{-Q/p}.
\]
Also, for $0<p<q<\infty$,
\[
 \A^q_{K,\alpha}\subsetneq\A^p_{K,\alpha}.
\]
\end{corollary}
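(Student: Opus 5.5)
The upper bound $\mathcal E_{K,p,\alpha}(a)\le C\delta(a)^{-Q/p}$ is already recorded after Proposition~\ref{prop:evaluation}. For the lower bound, the plan is to use the normalized test mappings $h_a$ of Section~\ref{sec:concentration}. Since $K\ge\Kstar$, every $\Kstar$-quasiregular mapping is $K$-quasiregular, because the outer-distortion inequality with constant $\Kstar$ implies it with the larger constant $K$. Hence each $h_a$ belongs to $\A^p_{K,\alpha}$ and has norm one. The point $a$ lies in $\Delta_r(a)$ for any fixed $r$, so the second estimate in \eqref{eq:peakbounds} gives $|h_a(a)|^p\ge c_r\delta(a)^{-Q}$. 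Consequently $\mathcal E_{K,p,\alpha}(a)\ge c\,\delta(a)^{-Q/p}$. For $a=0$ the same argument applies with $t=1$, where both sides are comparable to constants.

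For the inclusion, let $f\in\A^q_{K,\alpha}$. Since $V_\alpha(B)<\infty$ and $q>p$, H\"older's inequality gives $\|f\|_{p,\alpha}\le V_\alpha(B)^{1/p-1/q}\|f\|_{q,\alpha}$. Thus $\A^q_{K,\alpha}\subseteq\A^p_{K,\alpha}$, and this step uses no quasiregularity.

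Strictness is the only step that needs a construction, and I expect it to be the main point to check. I would use a boundary pole. By Proposition~\ref{thm:poles} with $t=0$, the mapping $F_{\sigma,\omega,0}$ is $\Kstar$-quasiregular, hence $K$-quasiregular, and has modulus $|x-\omega|^{-\sigma}$. By \eqref{eq:boundarypoleintegral}, $|F|^s$ is $V_\alpha$-integrable exactly when $\sigma s<Q$. Choosing $\sigma$ with $Q/q\le\sigma<Q/p$ gives $\sigma p<Q$, so $F\in\A^p_{K,\alpha}$, while $\sigma q\ge Q$, so $F\notin\A^q_{K,\alpha}$. For example, take $\sigma=Q/q$, which is positive since $Q=n+\alpha>n-1>0$. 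This proves the proper inclusion.

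The only subtlety to verify is that the construction in Proposition~\ref{thm:poles} is genuinely valid at $t=0$. The statement explicitly includes $t\ge0$, and its proof notes that the inverse branch is defined because $1-\langle x,\omega\rangle>0$ on $B$. So no additional argument should be needed.
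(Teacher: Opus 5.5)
Your proposal is correct and follows the paper's own proof: the upper bound comes from \eqref{eq:evaluationgrowth}, the lower bound from evaluating the norm-one $\Kstar$-quasiregular mappings $h_a$ at $a$ via \eqref{eq:peakbounds}, the inclusion from H\"older's inequality and $V_\alpha(B)<\infty$, and strictness from the boundary pole $F_{\sigma,\omega,0}$ with $Q/q\le\sigma<Q/p$ together with \eqref{eq:boundarypoleintegral}. You were also right to keep the hypothesis $K\ge\Kstar$ in the strictness step, and it is genuinely needed there: by Section~\ref{sec:distortion}, for $K=1$ and $n\ge3$ the classes $\A^p_{1,\alpha}$ coincide for all $p<Q$.
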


\begin{proof}
For the upper evaluation estimate, we use \eqref{eq:evaluationgrowth}. The lower estimate follows by evaluating $h_a$ at $a$ in \eqref{eq:peakbounds}. The inclusion follows from H\"older's inequality and finiteness of $V_\alpha(B)$. For strictness, choose $\sigma$ with $Q/q\le\sigma<Q/p$. The boundary-pole mapping $F_{\sigma,\omega,0}$ is in the $p$ class but not in the $q$ class, by \eqref{eq:boundarypoleintegral}.
\end{proof}

\subsection{A radial distortion range}

The preceding dimensional threshold works for every $p$ and $\alpha$. For specified exponents, the radial pole maps used in the earlier embedding theory~\cite[Sections~3 and~5]{OV} give another useful range. We record their outer distortion explicitly.

\begin{proposition}\label{prop:radialrange}
Suppose $K\ge1$ and \eqref{eq:radialrange} holds. The estimates
\eqref{eq:peakprofile}--\eqref{eq:uniformconcentration} then hold for
normalized $K$-quasiregular radial pole mappings, with $\sigma$ replaced
by some $\beta\in[1,K^{1/(n-1)}]$ satisfying $p\beta>Q$. In particular,
\[
 \mathcal E_{K,p,\alpha}(a)\asymp\delta(a)^{-Q/p}.
\]
\end{proposition}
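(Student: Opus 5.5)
The plan is to use the radial stretch map and adjust it so its pole lies outside the ball. For $\beta>0$, define $G_\beta(z)=|z|^{-\beta-1}z$ on $\R^n\setminus\{0\}$. Its radial singular value is $\beta|z|^{-\beta-1}$ and its tangential singular values are $|z|^{-\beta-1}$. The map is a stretch in the radial direction composed with the inversion $z\mapsto z/|z|^2$; the inversion is orientation reversing, so we compose with a reflection. Orientation is then preserved exactly when the radial and tangential orientations combine correctly, which is what the reflection arranges.

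For $\beta\ge1$ the operator norm is $\beta|z|^{-\beta-1}$ and $|J|=\beta|z|^{-n(\beta+1)}$. The outer distortion is therefore
\[
 \frac{\beta^n|z|^{-n(\beta+1)}}{\beta|z|^{-n(\beta+1)}}=\beta^{n-1},
\]
so $\beta\le K^{1/(n-1)}$ gives a $K$-quasiregular map. By the hypothesis $pK^{1/(n-1)}>Q$, we can choose $\beta\in[1,K^{1/(n-1)}]$ with $p\beta>Q$: take $\beta=K^{1/(n-1)}$ if that suffices, otherwise any admissible value close to it. If $Q/p<1$, then $\beta=1$ already works; that case is the Möbius inversion and is $1$-quasiregular.

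Next define $F(x)=\iota\,G_\beta(x-(1+t)\omega)$, where $\iota$ is a fixed reflection restoring orientation. This map is smooth on $B$, because the pole $(1+t)\omega$ lies outside $B$ for $t>0$ and on $\partial B$ for $t=0$. Its modulus is exactly $|x-(1+t)\omega|^{-\beta}$, which is \eqref{eq:polemodulus} with $\sigma$ replaced by $\beta$. Precomposition with translations and postcomposition with isometries preserve the distortion bound, so each $F$ is $K$-quasiregular.

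With the modulus identity in hand, the rest is formal. The estimates \eqref{eq:peakprofile}--\eqref{eq:peaklocalzero} and Lemma~\ref{lem:concentration} used only \eqref{eq:polemodulus}, Lemma~\ref{lem:poleintegral} with $s=\beta p>Q$, and the geometry of Lemmas~\ref{lem:cap} and~\ref{lem:hyperbolic}. They therefore transfer verbatim once $\sigma$ is replaced by $\beta$ and $h_a$ is normalized. The evaluation statement then follows as in Corollary~\ref{cor:sharpevaluation}: the upper bound comes from \eqref{eq:evaluationgrowth}, and the lower bound from \eqref{eq:peakbounds} at $x=a$.

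The main obstacle is the distortion bookkeeping. We must confirm that $\beta\ge1$ is forced, and that the outer-distortion convention gives exactly $\beta^{n-1}$, so that $K^{1/(n-1)}$ is the right threshold. If $\beta<1$, the distortion would instead be $\beta^{-n}\cdot\beta=\beta^{1-n}$, which is harmless. The precise orientation check via the reflection also needs care.
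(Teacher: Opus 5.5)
Your proposal is correct and matches the paper's proof: the reflected radial pole map $U(x-y)/|x-y|^{1+\beta}$, outer distortion exactly $\beta^{n-1}$ for $\beta\ge1$, modulus $|x-y|^{-\beta}$, and a verbatim transfer of the estimates that use only the modulus (note that $\beta=K^{1/(n-1)}$ always satisfies $p\beta>Q$ under \eqref{eq:radialrange}, so no fallback choice is needed). One small slip in your aside: for $\beta<1$ the outer distortion is $1/\beta$, not $\beta^{1-n}$, which is the inner distortion; this plays no role because you take $\beta\ge1$.
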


\begin{proof}
Choose $\beta$ with $1\le\beta\le K^{1/(n-1)}$ and $p\beta>Q$. Let $U$ be an orientation-reversing orthogonal map, let $y=(1+t)\omega$, and define
\[
 F_t(x)=\frac{U(x-y)}{|x-y|^{1+\beta}}.
\]
Before the orthogonal map is applied, the radial derivative has eigenvalue $-\beta|x-y|^{-1-\beta}$ and the tangential derivatives have eigenvalue $|x-y|^{-1-\beta}$. Thus $U$ corrects the orientation and
\[
 J_{F_t}=\beta|x-y|^{-n(1+\beta)},\qquad
 |DF_t|^n=\beta^n|x-y|^{-n(1+\beta)}.
\]
The outer distortion is exactly $\beta^{n-1}\le K$, and $|F_t(x)|=|x-y|^{-\beta}$. Since $p\beta>Q$, we may apply Lemma~\ref{lem:poleintegral} to normalize these mappings as before. The estimates \eqref{eq:peakprofile}--\eqref{eq:uniformconcentration}, with $\sigma$ replaced by $\beta$, use only this modulus. They therefore hold for the normalized radial mappings. The point-evaluation assertion follows by combining the lower bound in \eqref{eq:peakbounds} at $x=a$ with \eqref{eq:evaluationgrowth}.
\end{proof}

\section{Reverse inequalities}\label{sec:dominating}

We now prove the domination theorem and its measure version. The simplest
local case is $d\nu=\mathbf1_F\,dx$, where $F$ occupies a fixed positive
part of a smaller concentric ball. We ask when the integral on $F$
controls the integral on that ball. In a contradiction argument, a
normalized sequence with bounded growth between the two balls has a
nonzero locally uniform limit. Its zeros on the smaller closed ball are
finite, and uniform control of small-ball masses keeps some mass away
from these zeros. The following lemma formulates this observation for
measures, including singular ones.

\begin{lemma}\label{lem:localsmallness}
Fix $0<r<R<1$, $K\ge1$, $0<p<\infty$, and $D,m,A,s>0$. Let $\nu$ be a positive Borel measure on $B(0,r)$, extended by zero to $\R^n$, such that
\[
 \nu(B(0,r))\ge m,\qquad
 \nu(B(z,t))\le At^s
 \quad(z\in\R^n,\ 0<t\le1).
\]
If $g$ is $K$-quasiregular on $B(0,R)$ and
\begin{equation}\label{eq:localdoubling}
 \int_{B(0,R)}|g|^p\dd x
 \le D\int_{B(0,r)}|g|^p\dd x<\infty,
\end{equation}
then
\begin{equation}\label{eq:measurelocalsmallness}
 \int_{B(0,r)}|g|^p\dd x
 \le C\int_{B(0,r)}|g|^p\dd\nu,
\end{equation}
where $C$ depends only on the fixed parameters.
\end{lemma}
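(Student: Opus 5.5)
We argue by contradiction and compactness, following the outline given just before the statement. Both sides of \eqref{eq:measurelocalsmallness} are $p$-homogeneous in $g$, so we normalize $\int_{B(0,r)}|g|^p\dd x=1$. Suppose the conclusion fails. Then there are measures $\nu_j$ satisfying the hypotheses with the fixed $m,A,s$, and $K$-quasiregular $g_j$ on $B(0,R)$ satisfying \eqref{eq:localdoubling}, such that
\[
 \int_{B(0,r)}|g_j|^p\dd x=1,\qquad \int_{B(0,r)}|g_j|^p\dd\nu_j\to0 .
\]
By \eqref{eq:localdoubling}, $\int_{B(0,R)}|g_j|^p\le D$. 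Lemma~\ref{lem:local} then makes $(g_j)$ locally uniformly bounded and equicontinuous on $B(0,R)$. Choose $r<r'<R$. A subsequence converges uniformly on $\overline{B(0,r')}$ to a limit $g$. By the compactness theorem recalled in Section~\ref{sec:local}, $g$ is $K$-quasiregular or constant. Uniform convergence on $\overline{B(0,r)}$ gives $\int_{B(0,r)}|g|^p\dd x=1$, so $g\not\equiv0$. Passing to a further subsequence, we may assume $\nu_j\to\nu$ weakly on $\overline{B(0,r)}$, since the masses $\nu_j(B(0,r))\le A$ are bounded. The limit inherits $\nu(\overline{B(0,r)})\ge m$ and $\nu(\overline{B(z,t)})\le At^s$; for the latter use open balls slightly larger and let the radius decrease.

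Next we show $\int|g|^p\dd\nu>0$. If $g$ is a nonzero constant, this is immediate. Otherwise $g$ is open and discrete, so its zeros in the compact set $\overline{B(0,r)}$ form a finite set $\{z_1,\dots,z_N\}$. Here is the point that needs care: we need $N$ bounded independently of the sequence, or else an argument that avoids this. Fortunately $g$ is one fixed limit mapping, so $N$ is simply some finite number. Choose $\tau>0$ with $NA\tau^s<m/2$. Then
\[
 \nu\Bigl(\overline{B(0,r)}\setminus\textstyle\bigcup_i B(z_i,\tau)\Bigr)\ge m/2 .
\]
On this compact set $|g|\ge\kappa>0$. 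Hence $\int|g|^p\dd\nu\ge\kappa^p m/2>0$.

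Finally we pass to the limit. Since $|g_j|^p\to|g|^p$ uniformly on $\overline{B(0,r)}$ and the masses of the $\nu_j$ are bounded,
\[
 \Bigl|\int|g_j|^p\dd\nu_j-\int|g|^p\dd\nu_j\Bigr|\to0,
\qquad
 \int|g|^p\dd\nu_j\to\int|g|^p\dd\nu .
\]
The second limit uses weak convergence with the continuous function $|g|^p$. A subtlety is that the $\nu_j$ live on the open ball while $\nu$ may charge the sphere $\partial B(0,r)$. We handle this by working throughout with measures on the closed ball $\overline{B(0,r)}$ and taking the integrals in \eqref{eq:measurelocalsmallness} there; since $\nu_j$ vanishes on the sphere, this changes nothing for the $\nu_j$. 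We conclude $\int|g_j|^p\dd\nu_j\to\int|g|^p\dd\nu>0$, a contradiction.

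The step I expect to be the main obstacle is the transfer of the measure hypotheses to the weak limit. In particular, the lower mass bound $\nu(\overline{B(0,r)})\ge m$ is preserved only because we use the closed ball, and the upper density bound must be used in the limit $\nu$ to excise neighbourhoods of the zeros of $g$. The constant $C$ obtained this way is non-explicit, but the contradiction argument shows it depends only on $r,R,K,p,D,m,A,s$ and $n$.
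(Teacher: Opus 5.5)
Your proof is correct and follows the paper's argument: you normalize, extract a locally uniform limit $g\not\equiv0$, excise small balls around its finitely many zeros in $\overline{B(0,r)}$ using the upper mass bound, and use the positive lower bound of $|g|$ off these balls. The only difference is your passage to a weak limit $\nu$ of the measures, which is valid but unnecessary: the bounds $\nu_j(B(0,r))\ge m$ and $\nu_j(B(z,t))\le At^s$ hold uniformly in $j$, so the paper excises the zeros directly against each $\nu_j$ and uses $|g_j|\ge\kappa/2$ on the complement for large $j$, and the step you flag as the main obstacle never arises.
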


\begin{proof}
We may suppose that the integral over the smaller ball is nonzero and normalize it to one. If no uniform constant exists, there are mappings $g_j$ and measures $\nu_j$ satisfying the hypotheses such that
\[
 \int_{B(0,r)}|g_j|^p\dd x=1,\qquad
 \int_{B(0,R)}|g_j|^p\dd x\le D,\qquad
 \int_{B(0,r)}|g_j|^p\dd\nu_j\longrightarrow0.
\]
By the local estimate and quasiregular compactness, we may pass to a diagonal subsequence converging locally uniformly on $B(0,R)$ to a quasiregular mapping or a constant $g$. Since the convergence is uniform on $\overline{B(0,r)}$, we have $\int_{B(0,r)}|g|^p\dd x=1$. Thus $g$ is not identically zero.

The zeros of $g$ in $\overline{B(0,r)}$ are finite. Indeed, a nonzero constant has none, whereas a nonconstant quasiregular mapping has discrete zeros in the larger ball. Denote these zeros by $z_1,\ldots,z_N$. If $N>0$, choose $\varepsilon\in(0,1)$ so that $NA\varepsilon^s<m/2$. The union of the balls $B(z_k,\varepsilon)$ then has $\nu_j$-measure less than $m/2$ for every $j$. On its compact complement in $\overline{B(0,r)}$, the function $|g|$ has a positive minimum. By local uniform convergence, $|g_j|$ is also bounded away from zero there, while the complement has $\nu_j$-measure at least $m/2$. This contradicts the last limit. If $N=0$, the same argument uses the whole closed ball.
\end{proof}

For analytic Bergman spaces, reverse inequalities for measures are part of Luecking's sampling theory~\cite{Lue85,Lue00}. The following condition gives such an inequality for the full class of mappings with fixed distortion. It includes both restricted volume measures and singular measures.

\begin{theorem}\label{thm:singularreverse}
Let $\mu$ be a positive Borel measure on $B$. Suppose that there are $r\in(0,1)$ and $m,M,s>0$ such that, for every $a\in B$, $z\in\R^n$, and $0<t\le1$,
\begin{equation}\label{eq:diffusemeasure}
 \mu(\Delta_r(a))\ge m\delta(a)^Q,
 \qquad
 \mu\bigl(\Delta_r(a)\cap B(z,t\delta(a))\bigr)
 \le Mt^s\delta(a)^Q.
\end{equation}
Then, for every fixed $K\ge1$ and $0<p<\infty$,
\[
 C^{-1}\|f\|_{p,\alpha}^p
 \le\int_B|f|^p\dd\mu
 \le C\|f\|_{p,\alpha}^p,
 \qquad f\in\A^p_{K,\alpha}.
\]
The constant depends only on $n,\alpha,K,p,r,m,M,s$.
\end{theorem}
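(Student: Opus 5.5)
The upper inequality comes first, because it is the easier direction. Fix $R\in(r,1)$ and take the covering $(a_j)$ from Lemma~\ref{lem:hyperbolic}: the balls $\Delta_r(a_j)$ cover $B$, and the balls $\Delta_R(a_j)$ have bounded overlap. The estimate \eqref{eq:hyperbolicsup} gives $\sup_{\Delta_r(a_j)}|f|^p\le C\delta(a_j)^{-Q}\int_{\Delta_R(a_j)}|f|^p\,dV_\alpha$. The hypothesis yields an upper bound $\mu(\Delta_r(a))\le M\delta(a)^Q$: put $t=1$ and $z=a$ in \eqref{eq:diffusemeasure}, since $\Delta_r(a)\subset B(a,\delta(a))$ fails only up to a constant, and a bounded number of balls of radius $\delta(a)$ cover $\Delta_r(a)$ by the diameter bound in \eqref{eq:hyperbolicvolume}. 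Summing over $j$ and using bounded overlap then gives $\int_B|f|^p\,d\mu\le C\|f\|_{p,\alpha}^p$.

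For the lower inequality I would transport everything to the origin and apply Lemma~\ref{lem:localsmallness} ball by ball. Fix $r<r_1<R<1$. For each $a$, set $g=f\circ\phi_a$, which is $K$-quasiregular, and $\nu_a=\delta(a)^{-Q}(\phi_{-a})_*(\mu|_{\Delta_r(a)})$, a measure on $B(0,r)$. Then $\nu_a(B(0,r))\ge m$. Since $\phi_a$ is bi-Lipschitz on $B(0,R)$ with constants $\asymp\delta(a)$ by \eqref{eq:automorphismjac}, Euclidean balls $B(z,t)$ map into balls of radius $Ct\delta(a)$. This gives $\nu_a(B(z,t))\le AM t^s$, with the large-$t$ range handled trivially. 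By \eqref{eq:weighttransport}, the integrals of $|g|^p$ over $B(0,\rho)$ are comparable to $\delta(a)^{-Q}\int_{\Delta_\rho(a)}|f|^p\,dV_\alpha$, and $\int|g|^p\,d\nu_a=\delta(a)^{-Q}\int_{\Delta_r(a)}|f|^p\,d\mu$.

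The main obstacle is the doubling hypothesis \eqref{eq:localdoubling}, which cannot hold at every $a$. I would handle it with a good/bad ball decomposition on the lattice $(a_j)$. Call $j$ good if $\int_{\Delta_R(a_j)}|f|^p\,dV_\alpha\le D\int_{\Delta_r(a_j)}|f|^p\,dV_\alpha$. Summing the reverse inequality over the bad indices and using bounded overlap $N$ gives
\[
\sum_{\mathrm{bad}}\int_{\Delta_r(a_j)}|f|^p\,dV_\alpha\le D^{-1}\sum_j\int_{\Delta_R(a_j)}|f|^p\,dV_\alpha\le ND^{-1}\|f\|_{p,\alpha}^p.
\]
The covering gives $\|f\|_{p,\alpha}^p\le\sum_j\int_{\Delta_r(a_j)}|f|^p\,dV_\alpha$. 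Choosing $D=2N$ shows that the good balls carry at least half the norm. On each good ball, Lemma~\ref{lem:localsmallness}, applied with the constants transported above, bounds $\int_{\Delta_r(a_j)}|f|^p\,dV_\alpha$ by $C\int_{\Delta_r(a_j)}|f|^p\,d\mu$. Summing over the good indices with bounded overlap of the $\Delta_r(a_j)$ yields $\|f\|_{p,\alpha}^p\le C\int_B|f|^p\,d\mu$.

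One detail needs checking: the comparability constants in \eqref{eq:weighttransport} enter the doubling constant for $g$. This is harmless, since we may pick $D$ for $f$ and obtain a fixed $D'$ for $g$. A second point is that Lemma~\ref{lem:localsmallness} also requires $\int_{B(0,R)}|g|^p<\infty$, which holds because $f$ is continuous on compact subsets.
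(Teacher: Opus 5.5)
Your proposal is correct and follows the paper's proof essentially step for step. The shared steps are: the total-mass bound $\mu(\Delta_r(a))\le C_rM\delta(a)^Q$ obtained by covering with Euclidean balls of radius $\delta(a)$; the pulled-back measures $\nu_a$, whose decay comes from the Lipschitz bound on $\phi_a$; the good/bad splitting with threshold $2N$; Lemma~\ref{lem:localsmallness} on the good balls; and the forward inequality via \eqref{eq:hyperbolicsup}. The only blemishes are cosmetic: the upper mass bound carries a constant $C_r$ rather than $M$ alone, and the auxiliary radius $r_1$ is never used.
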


\begin{proof}
First, $\Delta_r(a)$ is covered by a bounded number of Euclidean balls of radius $\delta(a)$, since its diameter is at most $C_r\delta(a)$. Applying the second condition in \eqref{eq:diffusemeasure} with $t=1$, we obtain
\[
 \mu(\Delta_r(a))\le C_rM\delta(a)^Q.
\]
Pull back the restricted measure to $B(0,r)$ and normalize it by setting
\[
 \nu_a(F)=\delta(a)^{-Q}\mu(\phi_a(F)),
 \qquad F\subset B(0,r)\text{ Borel}.
\]
Its mass is at least $m$. The automorphism $\phi_a$ is Lipschitz on $\overline{B(0,r)}$ with constant $L_r\delta(a)$, by \eqref{eq:automorphismjac} and conformality. Thus the image of $B(z,t)\cap B(0,r)$, when nonempty, lies in a ball of radius $2L_rt\delta(a)$. When $2L_rt\le1$, the second condition in \eqref{eq:diffusemeasure} applies to that ball. Otherwise, the preceding total-mass bound applies. Consequently
\[
 \nu_a(B(z,t))\le At^s\qquad(0<t\le1),
\]
with $A$ independent of $a$.

Choose $r<R<1$ and a covering
\[
 B_j=\Delta_r(a_j),\qquad B_j^*=\Delta_R(a_j)
\]
whose enlarged balls have overlap at most $N$, as in Lemma~\ref{lem:hyperbolic}. Fix $f\in\A^p_{K,\alpha}$, and write
\[
 A_j=\int_{B_j}|f|^p\,dV_\alpha,\qquad
 A_j^*=\int_{B_j^*}|f|^p\,dV_\alpha.
\]
Call $j$ good if $A_j^*\le2NA_j$. Since $f\in\A^p_{K,\alpha}$, it follows from bounded overlap that $\sum_jA_j^*<\infty$. The remaining balls account for at most half the integral:
\[
 \sum_{j\text{ not good}}A_j
 \le\frac1{2N}\sum_jA_j^*
 \le\frac12\|f\|_{p,\alpha}^p.
\]
Since the smaller balls cover $B$, we obtain
\begin{equation}\label{eq:goodmass}
 \sum_{j\text{ good}}A_j\ge\frac12\|f\|_{p,\alpha}^p.
\end{equation}
For a good index, apply Lemma~\ref{lem:localsmallness} to $g=f\circ\phi_{a_j}$ and $\nu=\nu_{a_j}$. By the weighted change of variables \eqref{eq:weighttransport}, we have
\[
 A_j\asymp\delta(a_j)^Q\int_{B(0,r)}|g|^p\dd u,\qquad
 A_j^*\asymp\delta(a_j)^Q\int_{B(0,R)}|g|^p\dd u.
\]
Hence \eqref{eq:localdoubling} holds with $D$ independent of $j$, and \eqref{eq:measurelocalsmallness} becomes
\[
 A_j\le C\int_{B_j}|f|^p\dd\mu.
\]
Summing over good indices and using \eqref{eq:goodmass} together with bounded overlap, we obtain the reverse inequality.

For the forward inequality, we combine \eqref{eq:hyperbolicsup} with the upper mass bound to obtain
\[
 \int_{B_j}|f|^p\dd\mu
 \le\mu(B_j)\sup_{B_j}|f|^p
 \le CA_j^*.
\]
Summation proves the result. In particular, applying this estimate to a nonzero constant shows that $\mu(B)<\infty$.
\end{proof}

\begin{proof}[Proof of Theorem~\ref{thm:maindomination}]
Suppose that $E$ satisfies \eqref{eq:density}, and set $\mu=\mathbf1_E\,dV_\alpha$. The lower bound in \eqref{eq:diffusemeasure} follows from the weighted volume estimate. On $\Delta_r(a)$ we have $\delta(x)\asymp\delta(a)$, so
\[
 \mu\bigl(\Delta_r(a)\cap B(z,t\delta(a))\bigr)
 \le C\delta(a)^\alpha(t\delta(a))^n
 =Ct^n\delta(a)^Q.
\]
Sufficiency now follows from Theorem~\ref{thm:singularreverse} with $s=n$.

For necessity, assume $K\ge\Kstar$ and test \eqref{eq:domination} on the norm-one mappings $h_a$ from Section~\ref{sec:concentration}. We have
\[
 \int_E|h_a|^p\,dV_\alpha\ge C_E^{-1}.
\]
By Lemma~\ref{lem:concentration}, choose one $r<1$, independent of $a$, for which the integral over $B\setminus\Delta_r(a)$ is at most $(2C_E)^{-1}$. Hence
\[
 \int_{E\cap\Delta_r(a)}|h_a|^p\,dV_\alpha\ge(2C_E)^{-1}.
\]
The upper bound in \eqref{eq:peakbounds} yields
$V_\alpha(E\cap\Delta_r(a))\ge c\delta(a)^Q$.
By \eqref{eq:hyperbolicvolume}, this is \eqref{eq:density}. Under \eqref{eq:radialrange}, the same argument applies with the normalized radial mappings from Proposition~\ref{prop:radialrange}.
\end{proof}

\begin{corollary}\label{cor:boundeddensity}
Let $0\le w\le M<\infty$ be measurable on $B$. If, for some $r\in(0,1)$ and $\eta>0$,
\begin{equation}\label{eq:lowerdensity}
 \int_{\Delta_r(a)}w\,dV_\alpha
 \ge\eta V_\alpha(\Delta_r(a))\qquad(a\in B),
\end{equation}
then, for every fixed $K$,
\begin{equation}\label{eq:reverseweight}
 \|f\|_{p,\alpha}^p\le C\int_Bw|f|^p\,dV_\alpha
 \qquad(f\in\A^p_{K,\alpha}).
\end{equation}
Conversely, \eqref{eq:reverseweight} implies \eqref{eq:lowerdensity} for some fixed $r,\eta$ if $K\ge\Kstar$ or \eqref{eq:radialrange} holds.
\end{corollary}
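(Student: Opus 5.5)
For the sufficiency direction, I would reduce to a relatively dense set and then apply Theorem~\ref{thm:maindomination}. Let $\eta'=\eta/(2M)$ and set $E=\{x\in B:w(x)\ge\eta'\}$. Since $w\le M$, we can split the integral in \eqref{eq:lowerdensity} over $E$ and its complement:
\[
 \eta V_\alpha(\Delta_r(a))\le\int_{\Delta_r(a)}w\,dV_\alpha
 \le M\,V_\alpha(E\cap\Delta_r(a))+\eta' V_\alpha(\Delta_r(a)).
\]
Because $\eta'\le\eta/2$, this gives
\[
 V_\alpha(E\cap\Delta_r(a))\ge\frac{\eta}{2M}V_\alpha(\Delta_r(a)),
\]
so $E$ is relatively dense. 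Theorem~\ref{thm:maindomination} then yields $\|f\|_{p,\alpha}^p\le C_E\int_E|f|^p\,dV_\alpha$. Since $w\ge\eta'$ on $E$, the right-hand side is at most $C_E(\eta')^{-1}\int_Bw|f|^p\,dV_\alpha$, which proves \eqref{eq:reverseweight}. (Alternatively, one could apply Theorem~\ref{thm:singularreverse} directly to $d\mu=w\,dV_\alpha$; its upper condition holds with $s=n$ because $w\le M$.)

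For the converse, assume \eqref{eq:reverseweight} holds with constant $C$, and that either $K\ge\Kstar$ or \eqref{eq:radialrange} holds. I would follow the necessity part of the proof of Theorem~\ref{thm:maindomination}. Take the normalized mappings $h_a$, either from Section~\ref{sec:concentration} or from Proposition~\ref{prop:radialrange}. Testing \eqref{eq:reverseweight} on them gives $\int_Bw|h_a|^p\,dV_\alpha\ge C^{-1}$.

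By Lemma~\ref{lem:concentration}, applied in the radial case through Proposition~\ref{prop:radialrange}, there is a fixed $r<1$, independent of $a$, with
\[
 \int_{B\setminus\Delta_r(a)}|h_a|^p\,dV_\alpha\le (2CM)^{-1}.
\]
Since $w\le M$, the contribution of $B\setminus\Delta_r(a)$ to $\int_Bw|h_a|^p\,dV_\alpha$ is then at most $(2C)^{-1}$. Consequently
\[
 \int_{\Delta_r(a)}w|h_a|^p\,dV_\alpha\ge(2C)^{-1}.
\]
The upper bound $|h_a|^p\le C'\delta(a)^{-Q}$ from \eqref{eq:peakbounds} now gives $\int_{\Delta_r(a)}w\,dV_\alpha\ge c\,\delta(a)^Q$. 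By \eqref{eq:hyperbolicvolume}, this is \eqref{eq:lowerdensity} with some $\eta>0$.

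The only point that needs care is that the bound $w\le M$ is what makes the tail estimate usable: without it, a large weight on $B\setminus\Delta_r(a)$ could carry the whole integral. The other steps are direct applications of earlier results.
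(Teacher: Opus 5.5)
\textbf{Converse.} This is the paper's argument: test \eqref{eq:reverseweight} on $h_a$, use $w\le M$ with Lemma~\ref{lem:concentration} to discard $B\setminus\Delta_r(a)$ for one fixed $r$, then finish with the upper bound in \eqref{eq:peakbounds} and \eqref{eq:hyperbolicvolume}.

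\textbf{Sufficiency.} The paper uses the route you mention only parenthetically. It checks \eqref{eq:diffusemeasure} for $w\,dV_\alpha$: the lower bound is \eqref{eq:lowerdensity}, and the upper bound with $s=n$ comes from $w\le M$. It then applies Theorem~\ref{thm:singularreverse}. Your main argument instead passes to a superlevel set and invokes the set case of Theorem~\ref{thm:maindomination}. This works, but the threshold has a slip. The choice $\eta'=\eta/(2M)$ satisfies $\eta'\le\eta/2$ only when $M\ge1$. For example, if $w\equiv M<1/2$ and $\eta=M$, your set $E=\{w\ge 1/2\}$ is empty. Use the threshold $\eta/2$ instead, or replace $M$ by $\max\{M,1\}$. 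The same computation then gives $V_\alpha(E\cap\Delta_r(a))\ge\frac{\eta}{2M}V_\alpha(\Delta_r(a))$ and $\int_E|f|^p\,dV_\alpha\le\frac{2}{\eta}\int_Bw|f|^p\,dV_\alpha$.

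\textbf{Comparison.} Your reduction shows that, for bounded weights, the corollary adds nothing to the set theorem, since such a weight always dominates a relatively dense superlevel set. The paper's route uses $w\le M$ only to obtain the small-ball decay in \eqref{eq:diffusemeasure}. It therefore applies unchanged to unbounded weights with that decay, for instance densities carried by progressively thinner shells around the spheres $S_j$ in Section~\ref{sec:dominating}. No superlevel set of such a weight need be relatively dense, so your reduction cannot handle them. Theorem~\ref{thm:maindomination} is itself derived from Theorem~\ref{thm:singularreverse}, so the two routes rest on the same local argument.
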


\begin{proof}
The measure $w\,dV_\alpha$ satisfies \eqref{eq:diffusemeasure} with $s=n$: the lower bound follows from \eqref{eq:lowerdensity}, and the upper bound follows from $w\le M$ and the volume estimate used above. We may therefore apply Theorem~\ref{thm:singularreverse} to obtain \eqref{eq:reverseweight}.

Conversely, test \eqref{eq:reverseweight} on $h_a$. In view of the bound $w\le M$, we may use uniform concentration to choose one $r<1$ for which
\[
 \int_{\Delta_r(a)}w|h_a|^p\,dV_\alpha\ge(2C)^{-1}.
\]
Since $|h_a|^p\le C_1\delta(a)^{-Q}$, this implies
$\int_{\Delta_r(a)}w\,dV_\alpha\ge(2CC_1)^{-1}\delta(a)^Q$.
It now follows from the weighted volume estimate that \eqref{eq:lowerdensity} holds. The same proof applies in the radial range.
\end{proof}

\subsection{Singular and atomic measures}

Theorem~\ref{thm:singularreverse} permits measures supported on sets of volume zero. For example, let $(a_j)$ be a maximal $h$-separated sequence in the hyperbolic metric $d$, where $h>0$ is fixed. Set $\delta_j=\delta(a_j)$ and choose $0<\varepsilon<1/4$. The measure
\[
 \mu=\sum_j\delta_j^{\alpha+1}
       \mathcal H^{n-1}\!\restriction S_j,
 \qquad S_j=\partial B(a_j,\varepsilon\delta_j),
\]
is carried by a countable union of spheres and is therefore singular with respect to $dV_\alpha$. Here $\mathcal H^{n-1}\!\restriction S_j$ denotes surface measure on $S_j$.

To check \eqref{eq:diffusemeasure}, first note that every point of $S_j$ has hyperbolic distance at most
$c_\varepsilon=2\varepsilon/(1-\varepsilon)$ from $a_j$. Indeed, integrate the length element $2|dx|/(1-|x|^2)$ along the straight segment, where $\delta(x)\ge(1-\varepsilon)\delta_j$. Choose $r<1$ with $2\arctanh r>h+c_\varepsilon$. By maximality, for every $a\in B$ there is a point $a_j$ with $d(a,a_j)\le h$. Hence $S_j\subset\Delta_r(a)$ and $\delta_j\asymp\delta(a)$, so
\[
 \mu(\Delta_r(a))\ge c_n\varepsilon^{n-1}\delta_j^Q
 \ge c\delta(a)^Q.
\]
If $S_j$ meets $\Delta_r(a)$, its center lies within hyperbolic distance $2\arctanh r+c_\varepsilon$ of $a$. By separation and invariant-volume packing, the number of these indices is uniformly bounded, and all corresponding $\delta_j$ are comparable to $\delta(a)$. The elementary sphere estimate
\[
 \mathcal H^{n-1}(S_j\cap B(z,u))\le C_nu^{n-1}
\]
is independent of the radius of $S_j$. To see this, split the sphere into the $2n$ regions where a specified signed coordinate of the unit normal is at least $1/\sqrt n$, and project each region onto the corresponding coordinate hyperplane. The projection has Jacobian at least $1/\sqrt n$, and its image lies in a ball of radius $u$. Summing the estimates gives
\[
 \mu\bigl(\Delta_r(a)\cap B(z,t\delta(a))\bigr)
 \le C\delta(a)^{\alpha+1}(t\delta(a))^{n-1}
 =Ct^{n-1}\delta(a)^Q.
\]
Thus \eqref{eq:diffusemeasure} holds with $s=n-1$. The measure is finite as well: the disjoint hyperbolic balls of radius $h/2$ about the $a_j$ have weighted volumes comparable to $\delta_j^Q$, so $\sum_j\delta_j^Q<\infty$.

Atomic measures can also control the full fixed-distortion class, although they do not satisfy \eqref{eq:diffusemeasure}. This follows from the local H\"older estimate, provided the points are sufficiently close together.

Choose a Whitney decomposition $\mathcal W$ of $B$ into cubes with disjoint interiors such that
\begin{equation}\label{eq:whitneygeometry}
 4W\Subset B,\qquad
 \delta(x)\asymp\ell(W)\quad(x\in4W),
\end{equation}
and the cubes $4W$ have bounded overlap. Here $\ell(W)$ is the side length. Such a decomposition is obtained from maximal dyadic cubes whose distance from $\partial B$ is at least a sufficiently large fixed multiple of their diameter. The reverse distance bound follows because the dyadic parent fails this condition. If two enlarged cubes meet, their side lengths are comparable by these bounds and the Lipschitz property of $\delta$. Bounded overlap then follows from disjointness of the original interiors.

\begin{proposition}\label{thm:sampling}
There is an integer $m=m(n,K,p,\alpha)$ with the following property. Subdivide each $W\in\mathcal W$ into $m^n$ congruent cubes $E$, and choose $a_E\in E$. The atomic measure of finite mass
\[
 \mu_m=\sum_E V_\alpha(E)\,\delta_{a_E}
\]
satisfies
\[
 \|f\|_{p,\alpha}^p
 \asymp\int_B|f|^p\dd\mu_m
 =\sum_E V_\alpha(E)|f(a_E)|^p
 \qquad(f\in\A^p_{K,\alpha}).
\]
Here $\delta_a$ denotes unit point mass at $a$. The constants are independent of the choices of the points.
\end{proposition}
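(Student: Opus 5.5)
The plan is to compare, cube by cube, the integral of $|f|^p$ over each small cube $E$ with the sampled value $V_\alpha(E)|f(a_E)|^p$. The error is controlled by the oscillation of $f$ on $E$, and the local H\"older estimate \eqref{eq:localholder} makes that oscillation a small multiple of an integral average over the enlarged Whitney cube $4W$. Fix $W\in\mathcal W$ with side length $\ell=\ell(W)$, and let $E\subset W$ be one of the $m^n$ subcubes, of side $\ell/m$. By \eqref{eq:whitneygeometry}, every point of $W$ is the center of a Euclidean ball of radius $c_0\ell$ whose double lies in $4W$, with $c_0$ depending only on $n$. Cover $W$ by a bounded number of such balls. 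For $x,y$ in the same small cube $E$, we have $|x-y|\le\sqrt n\,\ell/m$, and both points lie in a common ball once $m$ is large. Then \eqref{eq:localholder} with exponent $s=p$ gives
\[
 \osc_E f\le C m^{-\vartheta}\Bigl(\ell^{-n}\int_{4W}|f|^p\dd x\Bigr)^{1/p}.
\]
Since $\delta\asymp\ell$ on $4W$, this can be rewritten as $(\osc_E f)^p\le Cm^{-\vartheta p}\ell^{-Q}\int_{4W}|f|^p\,dV_\alpha$.

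Next, write $|f(x)|\le |f(a_E)|+\osc_E f$ and $|f(a_E)|\le|f(x)|+\osc_E f$ for $x\in E$, and apply \eqref{eq:elementarypower}. Integrating over $E$ against $dV_\alpha$ and using $V_\alpha(E)\asymp m^{-n}\ell^Q$ gives
\[
 \int_E|f|^p\,dV_\alpha\le c_p V_\alpha(E)|f(a_E)|^p+Cm^{-n-\vartheta p}\int_{4W}|f|^p\,dV_\alpha,
\]
and a symmetric bound for $V_\alpha(E)|f(a_E)|^p$. Summing over the $m^n$ subcubes of $W$ gives an error of $Cm^{-\vartheta p}\int_{4W}|f|^p\,dV_\alpha$. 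Summing over $W$, bounded overlap of the cubes $4W$ turns this into $C'm^{-\vartheta p}\|f\|_{p,\alpha}^p$. This yields
\[
 \|f\|_{p,\alpha}^p\le c_p\sum_E V_\alpha(E)|f(a_E)|^p+C'm^{-\vartheta p}\|f\|_{p,\alpha}^p .
\]
I would then choose $m$ with $C'm^{-\vartheta p}\le 1/2$; since $f\in\A^p_{K,\alpha}$ has finite norm, the error term can be absorbed. The upper bound $\sum_E V_\alpha(E)|f(a_E)|^p\le C\|f\|_{p,\alpha}^p$ needs no smallness. It follows either from the same symmetric estimate, or directly from $|f(a_E)|^p\le\sup_W|f|^p\le C\ell^{-Q}\int_{4W}|f|^p\,dV_\alpha$ via \eqref{eq:localsup}, using $\sum_{E\subset W}V_\alpha(E)=V_\alpha(W)\asymp\ell^Q$. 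Finite mass of $\mu_m$ is just $\sum_E V_\alpha(E)=V_\alpha(B)$.

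The step I expect to be the main obstacle is the geometric bookkeeping behind the H\"older estimate. Lemma~\ref{lem:local} is stated for $x,y\in B(a,r/2)$ with $B(a,2r)\Subset\Omega$, so I need a uniform family of balls, with radius a fixed fraction of $\ell$, such that every pair of points in a common small cube lies in one such half-ball and the doubled ball lies in $4W$. Taking ball centers at the points of $W$ themselves, with radius $c_0\ell$ where $c_0$ is dimensional and fixed by \eqref{eq:whitneygeometry}, handles this as soon as $\sqrt n\,\ell/m<c_0\ell/2$. The constants then depend only on $n,K,p,\alpha$ and the fixed Whitney parameters, and in particular not on the chosen points $a_E$. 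The argument uses only \eqref{eq:localsup}, \eqref{eq:localholder} and the triangle inequality in $\R^n$, so it is valid for $p<1$ as well.
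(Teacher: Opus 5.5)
Your proposal is correct and follows essentially the same route as the paper: the local H\"older estimate \eqref{eq:localholder}, applied on balls of radius comparable to $\ell(W)$ whose doubles lie in $4W$, bounds the oscillation on each subcube by $Cm^{-\vartheta}$ times the $\ell(W)^{-Q}$-normalized mass of $4W$; summing with bounded overlap and choosing $m$ large then absorbs the error. The only difference is packaging: the paper phrases this as $\|f-S_mf\|_{p,\alpha}^p\le Cm^{-p\vartheta}\|f\|_{p,\alpha}^p$ for the step function $S_mf$ and uses the ($p$-)triangle inequality in both directions, whereas you apply \eqref{eq:elementarypower} cube by cube and absorb directly.
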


\begin{proof}
The measure has total mass $\mu_m(B)=V_\alpha(B)<\infty$. Let $S_mf$ equal $f(a_E)$ on $E$, assigning cube boundaries arbitrarily. Take $m\ge4\sqrt n$. If $x,a_E\in E\subset W$, then $B(a_E,\ell(W))\subset4W$ and $|x-a_E|\le\ell(W)/4$. We apply the local H\"older estimate \eqref{eq:localholder} with radius $\ell(W)/2$ and then use \eqref{eq:whitneygeometry} to obtain
\[
 \sup_{x\in E}|f(x)-f(a_E)|^p
 \le Cm^{-p\vartheta}\ell(W)^{-Q}
                  \int_{4W}|f|^p\,dV_\alpha.
\]
Multiply by $V_\alpha(E)$ and sum. Since
$\sum_{E\subset W}V_\alpha(E)=V_\alpha(W)\asymp\ell(W)^Q$,
it follows from bounded overlap that
\begin{equation}\label{eq:samplingerror}
 \|f-S_mf\|_{p,\alpha}^p
 \le Cm^{-p\vartheta}\|f\|_{p,\alpha}^p.
\end{equation}
For $p\ge1$, choose $m$ so that the $p$th root of the coefficient is less than $1/2$ and use the triangle inequality in both directions. For $0<p<1$, use
\[
 \|f\|_{p,\alpha}^p
 \le\|f-S_mf\|_{p,\alpha}^p+\|S_mf\|_{p,\alpha}^p
\]
and the corresponding inequality with $f$ and $S_mf$ interchanged. Both bounds again follow if we take the coefficient in \eqref{eq:samplingerror} less than $1/2$. Finally,
$\|S_mf\|_{p,\alpha}^p=\int_B|f|^p\dd\mu_m$.
\end{proof}

The sampling points just constructed form a locally finite set. The values at these points control the integral, but they need not determine the mapping. The following perturbations show this for every $K>1$.

\begin{proposition}\label{prop:nonlinear}
Let $K>1$. For every locally finite subset $S\subset B$, there are distinct bounded mappings $f,g\in\A^p_{K,\alpha}$ which agree on $S$. Moreover, no constant $C$ satisfies
\[
 |f(0)-g(0)|\le C\|f-g\|_{p,\alpha}
 \qquad(f,g\in\A^p_{K,\alpha}).
\]
\end{proposition}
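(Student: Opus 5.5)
Since $K>1$, the plan is to build explicit radial quasiconformal self-maps of $B$ that can be modified inside a small ball without leaving the class. Fix a ball $B(b,\rho)\Subset B$ that misses the locally finite set $S$; this is possible because $S$ is locally finite. Let $f(x)=x$, which is bounded and lies in $\A^p_{K,\alpha}$. For $g$ we use a radial stretch supported in $B(b,\rho)$. Set $g(x)=b+\psi(|x-b|)\frac{x-b}{|x-b|}$, where $\psi(s)=s$ for $s\ge\rho$ and $\psi(s)=\rho^{1-a}s^{a}$ for $s<\rho$, with $a\in(0,1)$ chosen so that the outer distortion $\max\{a^{-1},\ldots\}$ is at most $K$.

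This $g$ is continuous and in $W^{1,n}_{\mathrm{loc}}$. Its singular values are $\psi'(s)$ and $\psi(s)/s$, so the outer distortion of the power profile is $(\psi/s)^{n-1}\cdot(\psi/s)/\psi'\cdot$, that is, $|Dg|^n/J_g=(\psi/s)^{n}/(\psi'(\psi/s)^{n-1})=1/a\le K$ once $a\ge1/K$. Thus $g$ is $K$-quasiregular and, in fact, a homeomorphism of $B$ onto itself. Since $g=f$ outside $B(b,\rho)$, the two maps agree on $S$. They are distinct because $g(x)\ne x$ for $0<|x-b|<\rho$. Both are bounded, so both lie in every Bergman class.

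For the second assertion, take $b=0$ and shrink the radius. Put $f_j(x)=x$ and let $g_j$ be the same construction with $b=0$, $\rho=1/j$, but with an added translation in the target, $g_j(x)=\psi_j(|x|)x/|x|+v$. This is awkward, because a translation is global and destroys agreement. Instead we compare $f=\mathrm{id}$ with the two-parameter maps: use $a=1/K$ and take $\tilde g_j(x)=j\,g_j$-type rescalings, or, more simply, use the points $0$ and a nearby value. The cleanest plan is to set $f_j(x)=x+ v_j$ and $g_j(x)=\Psi_j(x)+v_j$. Here $\Psi_j$ is a quasiconformal self-map equal to the identity outside $B(0,1/j)$ that moves $0$ to a point $w$ with $|w|$ bounded below.

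Such a $\Psi_j$ exists because $K>1$. Compose the radial stretch above (centered at $0$) with a small quasiconformal "push" supported in $B(0,1/j)$ that sends $0$ to $w=\frac{1}{2j}e_1$. By itself this only gives $|f(0)-g(0)|\asymp 1/j$. We then compare it with $\|f-g\|_{p,\alpha}^p\le C\,j^{-n}\cdot j^{-p}$, so the ratio is $\asymp j^{n/p}\to\infty$. In fact a direct radial stretch suffices. Take $g_j=b_j+\psi(|x-b_j|)(x-b_j)/|x-b_j|$ with $b_j=\frac1{2j}e_1$, $\rho=1/j$, and $a=1/K$. Then $|g_j(0)-0|=\frac1{2j}\bigl(1-2^{1-a}\cdot\ldots\bigr)\asymp 1/j$, since $\psi(s)/s=(\rho/s)^{1-a}=2^{1-a}\ne1$, giving $|g_j(0)|=2^{1-a}\cdot\frac1{2j}$ up to the shift, hence $|f(0)-g_j(0)|=c_K/j$ with $c_K>0$. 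Meanwhile $|f-g_j|\le 2/j$ on the support $B(b_j,1/j)$, so $\|f-g_j\|_{p,\alpha}\le C j^{-1-n/p}$. The quotient $\asymp j^{n/p}$ is unbounded, which rules out any constant $C$.

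The main obstacle is verifying the distortion computation of the radial stretch carefully, including orientation and Sobolev regularity across the sphere $|x-b|=\rho$ and at the center. The other point to check is that $a=1/K<1$ is genuinely available, which is exactly where $K>1$ is used; for $K=1$ Liouville rigidity forbids this.
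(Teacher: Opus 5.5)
Your final argument is correct, but it modifies the identity differently from the paper.

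\textbf{The paper's route.} The paper uses smooth perturbations $x\mapsto x+\psi(x)$ with $\|D\psi\|_\infty\le c$, where $((1+c)/(1-c))^n\le K$. The distortion bound comes from the singular values of $I+D\psi$ lying in $[1-c,1+c]$. Injectivity and positivity of the Jacobian need a short separate argument, while Sobolev regularity is automatic. For the second assertion it takes $g_\varepsilon(x)=x+b\varepsilon\eta(x/\varepsilon)e_1$, which moves the origin by $b\varepsilon$ at $L^p_\alpha$-cost $\asymp\varepsilon^{1+n/p}$.

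\textbf{Your route.} You use the explicit radial stretch $x\mapsto b+\rho^{1-a}|x-b|^{a-1}(x-b)$ on $B(b,\rho)$, extended by the identity. Its outer distortion is exactly $1/a$, and it is visibly a homeomorphism fixing the complement of the ball. The price is a singularity at the center. You should record why $g\in W^{1,n}_{\mathrm{loc}}$ there: $g$ is continuous, smooth off $b$, and $|Dg|\lesssim|x-b|^{a-1}\in L^n$ near $b$ because $a>0$. The single point is then removable by the ACL characterization.

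Your off-center choice $b_j=e_1/(2j)$, $\rho=1/j$ gives
\[
 g_j(0)=(1-2^{1-a})\,b_j,\qquad |g_j(0)|=\frac{2^{1-a}-1}{2j}>0,
\]
where positivity uses $a<1$. You also have $\|f-g_j\|_{p,\alpha}\le Cj^{-1-n/p}$, so the quotient grows like $j^{n/p}$. This is exactly the paper's scaling. Both constructions use $K>1$ in the same way, to permit a nonconformal local change of the identity.

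\textbf{Write-up.} Remove the discarded attempts. In particular, a homeomorphism equal to the identity outside $B(0,1/j)$ maps that ball into itself, so it cannot send $0$ to a point $w$ with $|w|$ bounded below. Also replace your garbled formula for $|g_j(0)|$ by the one displayed above.
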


\begin{proof}
Choose $0<c<1$ so that $((1+c)/(1-c))^n\le K$. If a smooth compactly supported perturbation $\psi:\R^n\to\R^n$ satisfies $\|D\psi\|_\infty\le c$, the singular values of $I+D\psi$ lie between $1-c$ and $1+c$. Its determinant is positive, since $I+tD\psi$ is nonsingular for $0\le t\le1$. Thus $x\mapsto x+\psi(x)$ has outer distortion at most $K$. It is also injective, since
\[
 |x+\psi(x)-y-\psi(y)|\ge(1-c)|x-y|.
\]
Its restriction to $B$ is therefore quasiconformal onto its image.

Since $S$ is locally finite, we can choose a ball $U\Subset B$ disjoint from $S$. Choose a nonzero smooth perturbation supported in $U$ and multiply it by a sufficiently small constant to obtain the preceding derivative bound. The identity and its perturbation are distinct bounded members of the class and agree on $S$.

For the second assertion, choose $\eta\in C_c^\infty(B)$ with $\eta(0)=1$ and $b>0$ such that $b\|\nabla\eta\|_\infty\le c$. Set
\[
 f(x)=x,\qquad
 g_\varepsilon(x)=x+b\varepsilon\eta(x/\varepsilon)e_1,
 \qquad 0<\varepsilon<1/2.
\]
These maps have the same distortion bound, while
\[
 |g_\varepsilon(0)-f(0)|=b\varepsilon,\qquad
 \|g_\varepsilon-f\|_{p,\alpha}\asymp
 b\varepsilon^{1+n/p}.
\]
The second estimate follows by changing variables on the support $B(0,\varepsilon)$, where the weight is uniformly comparable to one. The ratio tends to infinity as $\varepsilon\downarrow0$.
\end{proof}

\section{The conformal class and the critical exponent}\label{sec:distortion}

We now determine what changes when the distortion bound is one. Throughout
this section, $n\ge3$ and $Q=n+\alpha$. By the quasiregular Liouville theorem~\cite{Rickman},
every nonconstant $1$-quasiregular mapping on $B$ is a restriction of a
M\"obius transformation. It is either a similarity or has the form
\[
 f(x)=b+\lambda U\frac{x-y}{|x-y|^2},\qquad
 \lambda>0,\quad |y|\ge1,
\]
where $U$ is an orthogonal map with determinant $-1$. The pole cannot belong
to $B$. This representation makes it possible to describe the conformal
Bergman classes more precisely than a general distortion class.

\begin{lemma}\label{lem:conformal-pole-size}
For $0<d\le1$ and $\omega\in\partial B$, put
\[
 I_p(d)=\int_B|x-(1+d)\omega|^{-p}\,dV_\alpha(x).
\]
The integral is independent of $\omega$, and
\[
 I_p(d)\asymp
 \begin{cases}
  1,&p<Q,\\
  \log(e/d),&p=Q,\\
  d^{Q-p},&p>Q.
 \end{cases}
\]
A pole on $\partial B$ is integrable precisely when $p<Q$.
\end{lemma}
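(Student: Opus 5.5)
Independence of $\omega$ follows from rotation invariance: for $U\in O(n)$ with $U\omega'=\omega$, the substitution $x=Ux'$ preserves $B$, $\delta$ and Lebesgue measure, so $I_p(d)$ does not depend on $\omega$. The case $p>Q$ is exactly \eqref{eq:poleintegral} from Lemma~\ref{lem:poleintegral} with $s=p$ and $t=d$. The last sentence, about a pole on $\partial B$, is \eqref{eq:boundarypoleintegral}. It remains to treat $p\le Q$, and I will reuse the dyadic shell decomposition from the proof of Lemma~\ref{lem:poleintegral}.

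Put $y=(1+d)\omega$. Since $|x-y|\ge d$ on $B$, I split $B$ into the shells $S_j=\{2^jd\le|x-y|<2^{j+1}d\}$ for $0\le j\le J$, where $J\asymp\log_2(1/d)$ is the last index with $2^jd\le 2$. Every $x\in B$ satisfies $|x-y|<3$, so these shells together with a bounded number of further ones exhaust $B$. As in that proof, $B\cap B(y,R)\subset B(\omega,2R)$, and Lemma~\ref{lem:cap} gives $V_\alpha(S_j)\le C(2^jd)^Q$. This yields
\[
 I_p(d)\le C\sum_{j=0}^{J+2}(2^jd)^{Q-p}.
\]
When $p<Q$ the sum is geometric and dominated by its largest term, of order $1$. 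When $p=Q$ every term equals $1$, so the bound is $C(J+3)\le C\log(e/d)$.

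For the lower bounds, the case $p<Q$ is immediate: on $B(0,1/2)$ we have $|x-y|\le 3$ and $\delta\asymp1$, so $I_p(d)\ge c$. For $p=Q$ I use the disjoint balls $B((1-t_k)\omega,t_k/8)$ with $t_k=4^{-k}$, taking $d\le t_k\le 1/4$. On such a ball $\delta(x)\asymp t_k$ and $|x-y|\le t_k+d+t_k/8\le 3t_k$. Its contribution is therefore at least $c\,t_k^{\alpha}t_k^n t_k^{-Q}=c$. The number of admissible $k$ is about $\log_4(1/d)$, and for $d$ near $1$ the trivial bound from $B(0,1/2)$ covers the range. Together these give $I_Q(d)\ge c\log(e/d)$ uniformly for $0<d\le1$.

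The only delicate point is uniformity as $d$ ranges up to $1$, where the shell count and $\log(e/d)$ are both bounded. Both are handled by the bounded-term bookkeeping above, so I expect no real obstacle.
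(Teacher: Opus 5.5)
Your proof is correct and follows the paper's argument essentially step for step: rotation invariance, citation of Lemma~\ref{lem:poleintegral} for $p>Q$ and for the boundary pole, dyadic shells about $(1+d)\omega$ for the critical upper bound, and the disjoint balls $B((1-4^{-k})\omega,4^{-k}/8)$ together with a fixed interior ball for the lower bounds. The only deviation is the subcritical upper bound: you sum the geometric shell series directly, whereas the paper uses $|x-(1+d)\omega|\ge|x-\omega|$ to dominate $I_p(d)$ by the finite boundary-pole integral; both arguments are uniform in $d$.
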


\begin{proof}
The first assertion follows from rotation invariance. The supercritical estimate
and the boundary-pole assertion were proved in Lemma~\ref{lem:poleintegral}.
When $p<Q$, write $y=(1+d)\omega$. Since
\[
 |x-y|^2-|x-\omega|^2
   =d(2+d-2\langle x,\omega\rangle)\ge0,
\]
the boundary-pole integral is a uniform upper bound. By integrating over
$B(0,1/4)$, we also obtain a positive uniform lower bound.

It remains to consider $p=Q$. Since $|x-y|\ge d$, we use the shells
$2^jd\le |x-y|<2^{j+1}d$. By the weighted cap estimate, the integral
over each shell of radius at most four is bounded above by $C$. There are
at most $C\log(e/d)$ such shells, and they cover $B$. Shells whose radius is at least $1/2$ contribute only a bounded amount, by $V_\alpha(B)<\infty$; thus the cap estimate is used only at small radii. For the lower bound,
use the pairwise disjoint balls
\[
 B((1-t_j)\omega,t_j/8),\qquad t_j=4^{-j},\quad d\le t_j\le1/4.
\]
On each ball, $\delta(x)\asymp t_j$ and $|x-y|\asymp t_j$, so its
contribution to $I_Q(d)$ is bounded below by a positive constant. The
number of these balls is comparable to $\log(1/d)$ for sufficiently
small $d$. For the remaining values of $d$, integration over a fixed
interior ball supplies the lower bound. This proves the lemma.
\end{proof}

The translation term can partly cancel the pole term. We therefore first control the parameters in the M\"obius representation by the values of $f$ on an interior ball.
Set $S_f=\sup_{B(0,1/4)}|f|$. By the local estimate,
\[
 S_f\le C\|f\|_{p,\alpha}.
\]
For a nonsimilarity, we use the inversion identity to write
\[
 |f(x)-f(z)|
   =\frac{\lambda|x-z|}{|x-y|\,|z-y|}.
\]
Taking $z=0$ and $x=e_1/8$, and using
$|e_1/8-y|\le(9/8)|y|$, we obtain
\[
 \frac{\lambda}{|y|^2}
 =8\frac{|e_1/8-y|}{|y|}\,|f(e_1/8)-f(0)|
 \le CS_f.
\]
Applying the same distance identity with $z=0$ and arbitrary $x\in B$
therefore yields
\begin{equation}\label{eq:conformal-parameter-growth}
 |f(x)|\le CS_f\left(1+\frac{|y|}{|x-y|}\right).
\end{equation}
If $|y|\ge2$, the ratio on the right is at most two, so
$\sup_B|f|\le CS_f$. If $1\le|y|\le2$, the scale parameter satisfies
$\lambda\le CS_f$, and by evaluating at zero we find
\[
 |b|\le |f(0)|+\frac{\lambda}{|y|}\le CS_f.
\]
Constants and similarities also satisfy $\sup_B|f|\le CS_f$.
All these constants are independent of the M\"obius parameters.

\subsection{Point evaluation}

We can now determine the sharp order of point evaluation in each range from the pole integral.

\begin{proposition}\label{prop:conformalevaluation}
For $n\ge3$, $\alpha>-1$, and $0<p<\infty$,
\begin{equation}\label{eq:conformalevaluation}
 \mathcal E_{1,p,\alpha}(a)\asymp
 \begin{cases}
  \delta(a)^{-1},&p<Q,\\
  \delta(a)^{-1}\bigl[\log(e/\delta(a))\bigr]^{-1/Q},&p=Q,\\
  \delta(a)^{-Q/p},&p>Q.
 \end{cases}
\end{equation}
The constants depend only on $n,p,\alpha$.
\end{proposition}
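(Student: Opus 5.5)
The plan is to prove the lower bound in \eqref{eq:conformalevaluation} with explicit inversions, and the upper bound by combining \eqref{eq:evaluationgrowth} with the parameter estimates derived from \eqref{eq:conformal-parameter-growth}.

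\emph{Lower bound.} For $|a|\le1/2$, a nonzero constant mapping already gives $\mathcal E_{1,p,\alpha}(a)\ge c$, and all three right-hand sides are bounded there. For $|a|>1/2$, write $a=(1-d)\omega$ with $d=\delta(a)$. Put $y=(1+d)\omega$ and
\[
 f(x)=U\frac{x-y}{|x-y|^2},
\]
with $U$ orthogonal of determinant $-1$. This is $1$-quasiregular on $B$, and $|f(x)|=|x-y|^{-1}$. Hence $\|f\|_{p,\alpha}^p=I_p(d)$ and $|f(a)|=1/(2d)$. Dividing by $I_p(d)^{1/p}$ and inserting Lemma~\ref{lem:conformal-pole-size} gives the three orders in \eqref{eq:conformalevaluation}:
\begin{itemize}
\item $d^{-1}$ when $p<Q$;
\item $d^{-1}\log(e/d)^{-1/Q}$ when $p=Q$;
\item $d^{-1}d^{(p-Q)/p}=d^{-Q/p}$ when $p>Q$.
\end{itemize}

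\emph{Upper bound, reductions.} For $p>Q$ the upper bound is exactly \eqref{eq:evaluationgrowth}, so assume $p\le Q$. Let $\|f\|_{p,\alpha}\le1$, so $S_f\le C$. By the discussion after Lemma~\ref{lem:conformal-pole-size}, constants, similarities and inversions with $|y|\ge2$ satisfy $\sup_B|f|\le C$, which is acceptable. It remains to treat $f=b+\lambda U(x-y)/|x-y|^2$ with $1\le|y|\le2$. Here $\lambda\le C$ and $|b|\le C$. Write $y=(1+t)\omega'$ with $0\le t\le1$ and set $s=t+d$. Since $|a-y|\ge|y|-|a|=s$, we get
\[
 |f(a)|\le C+\lambda/s.
\]
For $p<Q$ this already suffices: $\lambda\le C$ and $s\ge d$ give $|f(a)|\le C/d$.

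\emph{Upper bound, critical case $p=Q$.} This is the main obstacle, because the translation $b$ could partly cancel the pole term. The cancellation is handled by working only on the set where the pole term dominates. Where $|x-y|<\lambda/(2C)$, we have $\lambda/|x-y|\ge2|b|$, hence $|f(x)|\ge\lambda/(2|x-y|)$.

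Set $u=\lambda/s$. If $u\le C'$ for a suitable large constant $C'$, then $|f(a)|\le C$. This is acceptable, since $d^{-1}\log(e/d)^{-1/Q}$ is bounded below.

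If $u$ is large, use the disjoint balls $B((1-\tau_j)\omega',\tau_j/8)$ with $\tau_j=4^{-j}$ and $s\lesssim\tau_j\lesssim\lambda$. On each such ball $|x-y|\asymp\tau_j$, $\delta\asymp\tau_j$ and the pole term dominates. As in the proof of Lemma~\ref{lem:conformal-pole-size}, each ball contributes at least $c\lambda^Q$, and there are about $\log u$ of them. Therefore
\[
 (us)^Q\log u=\lambda^Q\log u\le C.
\]
This forces $u\le Cs^{-1}\log(e/s)^{-1/Q}$. The function $s\mapsto s^{-1}\log(e/s)^{-1/Q}$ is decreasing on $(0,3]$ up to constants, and $s\ge d$. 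Hence
\[
 |f(a)|\le C+u\le Cd^{-1}\log(e/d)^{-1/Q}.
\]

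The delicate points are verifying that the dyadic balls fit inside both $B$ and the domination region when $t>0$, and inverting the relation $(us)^Q\log u\le C$ uniformly in the parameters.
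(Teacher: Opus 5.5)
Your proof is correct. The lower bounds and the case $p<Q$ match the paper. You use the exterior pole also for $p<Q$, which is fine because $I_p(d)\asymp1$ uniformly there; the paper uses the boundary pole instead.

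The routes differ for $p\ge Q$:

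\begin{itemize}
\item For $p>Q$ you simply quote \eqref{eq:evaluationgrowth}. This is shorter than the paper's re-derivation from the pole parameters.
\item For $p=Q$, the cancellation you single out as the main obstacle is not really one. Write $t=|y|-1$ in your notation. Since $|b|\le C$ and $V_\alpha(B)<\infty$, the elementary power inequality gives $\lambda^QI_Q(t)=\|f-b\|_{Q,\alpha}^Q\le c_Q\bigl(\|f\|_{Q,\alpha}^Q+|b|^QV_\alpha(B)\bigr)\le C$. Lemma~\ref{lem:conformal-pole-size} then gives $\lambda\le C\log(e/t)^{-1/Q}$ directly. The bound $|f(a)|\le C+\lambda/(t+d)$, together with the monotonicity of $s\log(e/s)^{1/Q}$, finishes the proof. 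This is the paper's argument.
\end{itemize}

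Your localization to $\{|x-y|<\lambda/(2C)\}$ re-proves a localized piece of the lower bound in Lemma~\ref{lem:conformal-pole-size}. It yields only the weaker $\lambda^Q\log(\lambda/s)\le C$, so it needs an extra inversion step, e.g.\ splitting according to whether $u\ge s^{-1/2}$. That inversion and the placement of the dyadic balls both go through as you indicate, so nothing fails. But the detour buys nothing beyond the paper's one-line bound.

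One minor slip: since $s=t+d\le2$, state the quasi-monotonicity on $(0,2]$ rather than $(0,3]$, because $\log(e/s)<0$ for $s>e$.
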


\begin{proof}
Write $t=\delta(a)$ and normalize $\|f\|_{p,\alpha}=1$. Constants,
similarities, and poles with $|y|\ge2$ are uniformly bounded by the
preceding estimates. Suppose $1\le|y|<2$ and put $d=|y|-1$. Using the bounds
on $b$ and applying the elementary power inequality to
$\lambda U(x-y)/|x-y|^2=f(x)-b$, we find that
\[
 \lambda I_p(d)^{1/p}\le C.
\]
When $d=0$, this case is possible only for $p<Q$. Since
$|a-y|\ge t+d$, we obtain, for $d>0$,
\[
 |f(a)|\le C+\frac{C}{(t+d)I_p(d)^{1/p}}.
\]
For $p<Q$, the same estimate with $I_p(0)<\infty$ covers boundary
poles and gives the upper bound $Ct^{-1}$.

When $p=Q$, write $L(s)=\log(e/s)$. If $d\le t$, then
$(t+d)L(d)^{1/Q}\ge tL(t)^{1/Q}$. If $d\ge t$, use that
$sL(s)^{1/Q}$ is increasing on $(0,1]$: its derivative is
$L(s)^{1/Q}(1-1/(QL(s)))>0$, because $Q>1$. This proves the critical
upper bound. When $p>Q$, put $\gamma=1-Q/p\in(0,1)$. The pole estimate
reduces the upper bound to
\[
 \frac{d^\gamma}{t+d}\le t^{\gamma-1},
\]
which follows by considering $d\le t$ and $d\ge t$ separately.

For the matching lower bounds, take $a=(1-t)\omega$, with an arbitrary
$\omega$ if $a=0$, and choose a fixed orientation-reversing orthogonal
map $U$. For $p<Q$, normalize the boundary-pole mapping
$U(x-\omega)/|x-\omega|^2$ to have $\|\cdot\|_{p,\alpha}=1$. For $p\ge Q$,
use the exterior-pole mapping
$U(x-(1+t)\omega)/|x-(1+t)\omega|^2$, again normalized in the Bergman
class. Its value at $a$ has modulus $1/(2t)$ before normalization.
The stated lower bounds now follow from Lemma~\ref{lem:conformal-pole-size}.
\end{proof}

\subsection{Compactness and domination below the threshold}

We prove the compactness and subcritical domination assertions of Theorem~\ref{thm:mainconformal}. The domination conclusion is uniform over measurable sets $E$ whose weighted measure is bounded below: for each $\eta>0$, its constant depends only on $n,p,\alpha,\eta$ when $V_\alpha(E)\ge\eta$.

\begin{proof}
We first prove compactness below the threshold. On the closed unit ball,
$S_f$ is uniformly bounded. Formula~\eqref{eq:conformal-parameter-growth}
shows that constants, similarities, and poles with $|y|\ge2$ are uniformly
bounded on $B$. For the remaining mappings, put $\omega=y/|y|$ and use
$|x-y|\ge|x-\omega|$ to obtain
\[
 |f(x)|^p\le C(1+|x-\omega|^{-p}).
\]
For $0<r<1/4$, we apply the weighted cap estimate on inward dyadic shells to obtain
\[
 \int_{B\cap B(\omega,r)}|x-\omega|^{-p}\,dV_\alpha
 \le C\sum_{j\ge0}(2^{-j}r)^{Q-p}\le Cr^{Q-p},
\]
uniformly in $\omega$. On the complement of this cap, the integrand is
at most $r^{-p}$. Hence, for every $\varepsilon>0$,
\[
 \sup_{\|f\|_{p,\alpha}\le1}
 \int_{\{\delta<\varepsilon\}}|f|^p\,dV_\alpha
 \le C r^{Q-p}+C(1+r^{-p})V_\alpha(\{\delta<\varepsilon\}).
\]
First make $r$ small, using $p<Q$, and then let $\varepsilon$ tend to
zero. The right-hand side tends to zero uniformly in the unit ball.
By quasiregular compactness, every sequence in that ball has a locally
uniformly convergent subsequence. Its limit remains in the same ball by
Fatou's lemma. With the uniform tail estimate, we can then conclude that
the convergence holds in the metric $d_p$. Thus the closed unit ball is
compact.

To prove the assertion about dominating sets, suppose that the uniform
inequality for some fixed $\eta>0$ were false. There would be mappings
$f_j$ and measurable sets $E_j$ such that
\[
 \|f_j\|_{p,\alpha}=1,\qquad V_\alpha(E_j)\ge\eta,\qquad
 \int_{E_j}|f_j|^p\,dV_\alpha\longrightarrow0.
\]
By compactness, we may pass to a subsequence converging in the metric $d_p$ to
$f$ with $\|f\|_{p,\alpha}=1$. The elementary power inequality then
implies $\int_{E_j}|f|^p\,dV_\alpha\to0$. But $f$ is not identically
zero, and its zero set has weighted measure zero: it is empty for a
nonzero constant and discrete otherwise. Therefore, for sufficiently
small $c>0$,
\[
 V_\alpha(\{|f|\le c\})<\eta/2.
\]
It follows that $\int_{E_j}|f|^p\,dV_\alpha\ge c^p\eta/2$, a
contradiction. The converse follows by testing on a nonzero constant.

Finally, let $p\ge Q$ and let $d_j\downarrow0$. The simple-pole maps
with poles $(1+d_j)e_1$, divided by $I_p(d_j)^{1/p}$, have norm one
and tend locally uniformly to zero, because $I_p(d_j)\to\infty$.
They cannot have a subsequence converging in the metric $d_p$:
such a limit would be zero by local uniform convergence, while
their norms remain one. This proves noncompactness at and above
the threshold.
\end{proof}

The pole calculation also shows that, for fixed $n\ge3$ and $\alpha$,
the classes $\A^p_{1,\alpha}$ are the same for all $p<Q$. For $p\ge Q$
they consist precisely of the bounded conformal mappings and constants.
Indeed, a M\"obius restriction is bounded unless its pole lies on
$\partial B$, and a boundary pole is integrable exactly when $p<Q$.

\subsection{Domination at the critical exponent}

We now prove the logarithmic-density criterion in
Theorem~\ref{thm:mainconformal}. The essential point is that the critical
pole integral is comparable, up to a bounded term, to the integral of
the cap densities with respect to $dr/r$.

\begin{proof}[Proof of the critical criterion in Theorem~\ref{thm:mainconformal}]
Fix $r_0\in(0,1)$. We first compare the density integral with a pole
integral. Set
\[
 A_E(\omega,r)=V_\alpha(E\cap B(\omega,r)),\qquad
 J_E(\omega,d)=\int_d^{r_0}\Theta_E(\omega,r)\,\frac{dr}{r},
\]
and, for $0<d<r_0$, put
\[
 I_E(\omega,d)=\int_E|x-(1+d)\omega|^{-Q}\,dV_\alpha(x).
\]
We claim that, uniformly in $E$, $\omega$, and $d$,
\begin{equation}\label{eq:critical-cap-pole}
 cJ_E(\omega,d)\le I_E(\omega,d)
 \le C\bigl(1+J_E(\omega,d)\bigr).
\end{equation}
The constants may depend on $n$, $\alpha$, and $r_0$.

To prove this, write $s=|x-\omega|$. The identity
\[
 |x-(1+d)\omega|^2
 =s^2+2d(1-\langle x,\omega\rangle)+d^2
\]
together with the triangle inequality implies
\[
 \frac{d+s}{\sqrt2}\le |x-(1+d)\omega|\le d+s.
\]
For $s<r_0$, the quantities $d+s$ and $\max(d,s)$ differ by at
most a factor of two. Moreover, by Tonelli's theorem,
\[
 \begin{split}
 &\int_{E\cap B(\omega,r_0)}
          \max(d,|x-\omega|)^{-Q}\,dV_\alpha(x)\\
 &\qquad=r_0^{-Q}A_E(\omega,r_0)
       +Q\int_d^{r_0}A_E(\omega,r)\,\frac{dr}{r^{Q+1}}.
 \end{split}
\]
Indeed, the integrand on the left is
$r_0^{-Q}+Q\int_{\max(d,s)}^{r_0}r^{-Q-1}\,dr$.
By the weighted cap estimate in Lemma~\ref{lem:cap}, the last
integral is comparable to $J_E(\omega,d)$. The first term is bounded
uniformly. Outside $B(\omega,r_0)$, the pole integral is at most
$r_0^{-Q}V_\alpha(B)$. These observations prove
\eqref{eq:critical-cap-pole}.

Suppose first that $E$ is dominating. Apply the domination inequality
to the conformal pole mapping
\[
 x\longmapsto U\frac{x-(1+d)\omega}{|x-(1+d)\omega|^2},
\]
where $U$ is any fixed orientation-reversing orthogonal map. It
belongs to $\mathcal A^Q_{1,\alpha}$ for every $d>0$. Thus
\[
 I_E(\omega,d)\ge c I_Q(d)\ge c'\log(e/d),
\]
by Lemma~\ref{lem:conformal-pole-size}. Combining this with the upper
bound in \eqref{eq:critical-cap-pole}, we obtain
$J_E(\omega,d)\ge c''\log(e/d)$ for all sufficiently small $d$,
uniformly in $\omega$. This proves \eqref{eq:logdensity}.

Conversely, assume \eqref{eq:logdensity}. The lower bound
in \eqref{eq:critical-cap-pole} and
Lemma~\ref{lem:conformal-pole-size} imply that
\begin{equation}\label{eq:critical-pole-domination}
 I_E(\omega,d)\ge c_E I_Q(d),\qquad
 \omega\in\partial B,\quad 0<d\le1.
\end{equation}
Initially this follows for all sufficiently small $d$. The condition
also forces $V_\alpha(E)>0$. For the remaining $d\le1$, use
$|x-(1+d)\omega|\le3$ on $B$, so that
$I_E(\omega,d)\ge3^{-Q}V_\alpha(E)$, while $I_Q(d)$ is uniformly
bounded when $d$ is bounded away from zero.

A general M\"obius mapping also contains a translation term. We first
control this term from the values on a fixed interior part of $E$.
Choose $R<1$ so that $F=E\cap B(0,R)$ has positive weighted measure.
We claim that
\begin{equation}\label{eq:conformal-interior-observation}
 S_f\le C_F\left(\int_F|f|^Q\,dV_\alpha\right)^{1/Q}
\end{equation}
for every conformal mapping on $B$, with constants included. To see
this, suppose otherwise and normalize a sequence so that $S_{f_j}=1$
and $\int_F|f_j|^Q\,dV_\alpha\to0$. It follows from the parameter estimate
\eqref{eq:conformal-parameter-growth} and its elementary
analogue for similarities that this sequence is bounded on every compact
subset of $B$. By quasiregular compactness, we may pass to a locally uniformly
convergent subsequence with limit $f$, where $S_f=1$. The limit is
conformal or constant. Its zero set has weighted measure zero, since
it is discrete or empty. On the other hand, by uniform convergence on
$\overline{B(0,R)}$, we have $\int_F|f|^Q\,dV_\alpha=0$, a
contradiction. This proves \eqref{eq:conformal-interior-observation}.

It follows that $S_f\le C_E\|f\|_{L^Q(E,dV_\alpha)}$. Constants,
similarities, and mappings with poles $|y|\ge2$ are now controlled
on all of $B$ by \eqref{eq:conformal-parameter-growth} and the
preceding parameter bounds. A boundary pole is not integrable at $p=Q$,
so the remaining mappings have the form
\[
 f(x)=b+\lambda U\frac{x-y}{|x-y|^2},\qquad
 y=(1+d)\omega,\quad 0<d<1.
\]
By the parameter bound, $|b|\le CS_f$. Since $Q>1$, we may use the
triangle inequality on $E$ to obtain
\[
 \lambda I_E(\omega,d)^{1/Q}
 \le \|f\|_{L^Q(E,dV_\alpha)}+|b|V_\alpha(E)^{1/Q}
 \le C_E\|f\|_{L^Q(E,dV_\alpha)}.
\]
Applying \eqref{eq:critical-pole-domination} and then the triangle
inequality on $B$, we conclude that
\[
 \|f\|_{Q,\alpha}
 \le |b|V_\alpha(B)^{1/Q}+\lambda I_Q(d)^{1/Q}
 \le C_E\|f\|_{L^Q(E,dV_\alpha)}.
\]
This is the required domination inequality.

Finally, $0\le\Theta_E\le1$. Replacing $r_0$ by another fixed
radius changes the numerator in \eqref{eq:logdensity}
by a bounded amount, uniformly in $\omega$ and small $d$, and
changes its denominator by a constant. The normalized averages
therefore differ by a quantity tending uniformly to zero. This
proves the asserted independence of $r_0$.
\end{proof}

The criterion permits gaps whose logarithmic widths tend to infinity,
provided they occupy a sufficiently small proportion of the logarithmic scales. This gives the
following explicit distinction from relative density.

\begin{proposition}\label{prop:conformal-critical-domination}
Suppose $n\ge3$ and $p=Q$. The set
\[
 E=\left\{x\in B:
 -\log\delta(x)\notin\bigcup_{j=1}^{\infty}(4^j,4^j+j)\right\}
\]
is dominating for $\mathcal A^Q_{1,\alpha}$ but is not relatively
dense.
\end{proposition}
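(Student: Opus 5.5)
We verify the two claims separately. For domination, we check condition \eqref{eq:logdensity} from Theorem~\ref{thm:mainconformal}(2), and we use the logarithmic variable $u=-\log\delta(x)$. The removed set is a union of collars $G_j=\{x: 4^j<-\log\delta(x)<4^j+j\}$, that is, $\delta(x)\in(e^{-4^j-j},e^{-4^j})$.

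For non-relative-density, take $a_j=(1-\delta_j)e_1$, with $\delta_j=e^{-4^j-j/2}$, the logarithmic midpoint of the $j$th gap. For fixed $r$, Lemma~\ref{lem:hyperbolic} gives $\delta(x)\asymp_r\delta_j$ on $\Delta_r(a_j)$, so $|\log\delta(x)-\log\delta_j|\le C_r$ there. Once $j/2>C_r$, the ball $\Delta_r(a_j)$ lies inside $G_j$. Then $V_\alpha(E\cap\Delta_r(a_j))=0$, which contradicts \eqref{eq:density} for every $r,\eta$.

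For domination, fix $r_0$ small, say $r_0=1/8$, and $\omega\in\partial B$. The main estimate is a lower bound $\Theta_E(\omega,r)\ge c_0>0$ for every $r\in(0,r_0)$ with $\log(1/r)$ outside the enlarged intervals $(4^j-C,4^j+j+C)$. To prove it, use the ball $B((1-r/2)\omega,r/8)\subset B\cap B(\omega,r)$ from the proof of Lemma~\ref{lem:cap}. On it, $\delta\in(3r/8,5r/8)$, so $-\log\delta$ differs from $\log(1/r)$ by at most an absolute constant. If $\log(1/r)$ stays a distance $C$ from every gap, this ball lies in $E$. Its weighted volume is $\asymp r^Q\asymp V_\alpha(B\cap B(\omega,r))$, which gives the bound uniformly in $\omega$.

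With the substitution $v=\log(1/r)$, the integral $\int_d^{r_0}\Theta_E(\omega,r)\,dr/r$ is at least $c_0$ times the Lebesgue measure of the good set of $v\in(\log(1/r_0),\log(1/d))$. The set of bad $v$ up to height $T=\log(1/d)$ has measure at most $\sum_{j:4^j\le T+C}(j+2C)\le C(\log T)^2$. This is $o(T)$, so the liminf in \eqref{eq:logdensity} is at least $c_0>0$. Theorem~\ref{thm:mainconformal} then shows that $E$ is dominating.

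The only real point is the uniform cap lower bound above, namely that $E$ fills a fixed proportion of boundary caps at non-gap scales. This is routine once the interior ball from Lemma~\ref{lem:cap} is used. We may take $r_0$ arbitrary, since \eqref{eq:logdensity} is independent of $r_0$ by Theorem~\ref{thm:mainconformal}.
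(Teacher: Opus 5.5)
Your proposal is correct and follows the paper's proof essentially step by step. You use the same interior ball $B((1-r/2)\omega,r/8)$ to bound $\Theta_E$ below off an enlarged gap set of logarithmic measure $O(\log^2 T)=o(T)$, and the same points $a_j=(1-e^{-4^j-j/2})e_1$ at the logarithmic midpoints of the gaps to show that relative density fails for every fixed $r$.
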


\begin{proof}
Let $H=\bigcup_{j\ge1}(4^j,4^j+j)$, and fix $r_0<1/4$.
For $r=e^{-u}<r_0$, the ball
\[
 D_{\omega,r}=B((1-r/2)\omega,r/8)
\]
lies in $B\cap B(\omega,r)$, has weighted measure comparable to
$r^Q$, and satisfies
\[
 3r/8\le\delta(x)\le5r/8,\qquad x\in D_{\omega,r}.
\]
Consequently $D_{\omega,r}\subset E$ whenever
\[
 [u+\log(8/5),u+\log(8/3)]\cap H=\varnothing.
\]
For these values of $u$, it follows from Lemma~\ref{lem:cap} that
$\Theta_E(\omega,e^{-u})\ge c$, uniformly in $\omega$.
The excluded values of $u$ belong to the intervals
\[
 (4^j-\log(8/3),\,4^j+j-\log(8/5)),\qquad j\ge1.
\]
Each has length $j+\log(5/3)$. At most $C\log(U+2)$ such
intervals meet $[\log(1/r_0),U]$, and the sum of their lengths is
at most $C\log^2(U+2)=o(U)$. Hence, as $d=e^{-U}\downarrow0$,
\[
 \begin{split}
 \int_d^{r_0}\Theta_E(\omega,r)\,\frac{dr}{r}
 &=\int_{\log(1/r_0)}^U\Theta_E(\omega,e^{-u})\,du\\
 &\ge c\bigl(U-\log(1/r_0)-C\log^2(U+2)\bigr).
 \end{split}
\]
The lower bound is uniform in $\omega$, so domination follows from
Theorem~\ref{thm:mainconformal}.

To see that relative density fails, put
$t_j=\exp(-4^j-j/2)$ and $a_j=(1-t_j)e_1$. For any fixed
$r<1$, we have, by the hyperbolic-ball estimates,
$c_rt_j\le\delta(x)\le C_rt_j$ on $\Delta_r(a_j)$. Thus
$-\log\delta(x)$ differs from $4^j+j/2$ by a uniformly bounded
amount. For all sufficiently large $j$, the entire ball lies in
the deleted shell $4^j<-\log\delta(x)<4^j+j$. Hence
$E\cap\Delta_r(a_j)=\varnothing$. This holds for every fixed
$r<1$, so $E$ is not relatively dense.
\end{proof}

For $p>Q$, Theorem~\ref{thm:maindomination} applies with $K=1$, since
\eqref{eq:radialrange} holds. This proves the remaining part of
Theorem~\ref{thm:mainconformal}. The example above shows why the
inequality $p>Q$ is strict in the relative-density criterion.

The logarithmic behavior is specific to the conformal class. For every
fixed $K>1$, relative density becomes necessary at the critical exponent,
and the logarithmic factor in point evaluation disappears.

\begin{corollary}\label{cor:criticaldistortion}
Let $n\ge3$, $\alpha>-1$, and $K>1$. A measurable set $E\subset B$ is
dominating for $\A^Q_{K,\alpha}$ if and only if it is relatively dense.
Moreover,
\[
 \mathcal E_{K,Q,\alpha}(a)\asymp\delta(a)^{-1},\qquad a\in B,
\]
with constants depending only on $n,K,\alpha$. In particular, the set in
Proposition~\ref{prop:conformal-critical-domination} dominates
$\A^Q_{1,\alpha}$ but does not dominate $\A^Q_{K,\alpha}$ for any $K>1$.
\end{corollary}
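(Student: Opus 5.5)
The plan is to reduce everything to the radial range in Proposition~\ref{prop:radialrange}. At $p=Q$ with $K>1$ and $n\ge3$, we have $K^{1/(n-1)}>1$. Hence
\[
 pK^{1/(n-1)}=QK^{1/(n-1)}>Q=n+\alpha,
\]
so condition~\eqref{eq:radialrange} holds. Sufficiency of relative density for domination is already part of Theorem~\ref{thm:maindomination}, valid for every $K\ge1$ and every $p$. Necessity follows from the converse clause of Theorem~\ref{thm:maindomination}, which applies under \eqref{eq:radialrange}. Concretely, we pick $\beta$ with $1<\beta\le K^{1/(n-1)}$, so that $Q\beta>Q$. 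The radial pole mappings $U(x-y)/|x-y|^{1+\beta}$ have outer distortion $\beta^{n-1}\le K$. After normalization they satisfy the concentration estimate~\eqref{eq:uniformconcentration}, and the argument in the proof of Theorem~\ref{thm:maindomination} then gives relative density.

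For point evaluation, the upper bound $\mathcal E_{K,Q,\alpha}(a)\le C\delta(a)^{-Q/Q}=C\delta(a)^{-1}$ is \eqref{eq:evaluationgrowth} with $p=Q$. For the lower bound we evaluate the normalized radial pole mapping $h_a$ at $a$ and use the lower estimate in \eqref{eq:peakbounds}:
\[
 |h_a(a)|^Q\ge c\,\delta(a)^{-Q}.
\]
This is recorded in Proposition~\ref{prop:radialrange}, whose constants depend only on $n,K,\alpha$ once $\beta$ is fixed in terms of $K$; for instance, take $\beta=K^{1/(n-1)}$. The range $a$ near $0$ is covered by the convention $t=1$ used to define $h_a$.

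The final assertion combines the two directions. Proposition~\ref{prop:conformal-critical-domination} shows that the set $E$ dominates $\A^Q_{1,\alpha}$ but is not relatively dense. By the equivalence just proved, $E$ therefore fails to dominate $\A^Q_{K,\alpha}$ for every $K>1$.

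The only point needing care, and so the main obstacle, is that the concentration estimates really require the strict inequality $p\beta>Q$ and not merely $p\beta\ge Q$. For $\beta=1$, which is the conformal case, the critical pole integral grows only logarithmically, by Lemma~\ref{lem:conformal-pole-size}, and concentration fails. Any $K>1$ lets us choose $\beta>1$, and this is exactly what makes the supercritical estimates of Lemma~\ref{lem:poleintegral} and Lemma~\ref{lem:concentration} available.
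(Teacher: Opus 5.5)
Your proposal is correct and follows essentially the same route as the paper. You note that \eqref{eq:radialrange} holds at $p=Q$ for every $K>1$ and invoke Theorem~\ref{thm:maindomination} for the set characterization. You take the upper evaluation bound from \eqref{eq:evaluationgrowth} and the lower bound from a normalized radial pole map with $\beta=K^{1/(n-1)}>1$. You then read off the final assertion from Proposition~\ref{prop:conformal-critical-domination}.

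The only cosmetic difference is in the lower bound: the paper evaluates $F_t$ at $a$ explicitly, obtaining $|F_t(a)|=(2t)^{-\beta}$ and $\|F_t\|_{Q,\alpha}^Q\asymp t^{Q(1-\beta)}$, whereas you cite \eqref{eq:peakbounds} through Proposition~\ref{prop:radialrange}.
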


\begin{proof}
At $p=Q$, condition~\eqref{eq:radialrange} holds for every $K>1$.
The assertion about sets therefore follows from Theorem~\ref{thm:maindomination}.
For point evaluation, we use \eqref{eq:evaluationgrowth} for the upper bound.
For the lower bound, put $\beta=K^{1/(n-1)}>1$ and write
$a=(1-t)\omega$ with $t=\delta(a)$, taking any $\omega\in\partial B$
when $a=0$. Let $U$ be an
orientation-reversing orthogonal map and put
\[
 F_t(x)=\frac{U(x-(1+t)\omega)}{|x-(1+t)\omega|^{1+\beta}}.
\]
As computed in Proposition~\ref{prop:radialrange}, this mapping has outer
distortion $\beta^{n-1}\le K$. Since $Q\beta>Q$, we obtain from Lemma~\ref{lem:poleintegral} that
\[
 \|F_t\|_{Q,\alpha}^Q\asymp t^{Q(1-\beta)},
 \qquad |F_t(a)|=(2t)^{-\beta}.
\]
Dividing $F_t$ by $\|F_t\|_{Q,\alpha}$ yields
$\mathcal E_{K,Q,\alpha}(a)\ge ct^{-1}$. The last assertion follows
because the set in Proposition~\ref{prop:conformal-critical-domination}
is not relatively dense.
\end{proof}

\begin{remark*}
The contrast in Corollary~\ref{cor:criticaldistortion} concerns boundary
growth. At interior points, evaluation is right-continuous in the
distortion bound: for fixed $p>0$, $\alpha>-1$, and $K_0\ge1$,
\[
 \mathcal E_{K,p,\alpha}(a)\longrightarrow
 \mathcal E_{K_0,p,\alpha}(a)\qquad(K\downarrow K_0),
\]
locally uniformly for $a\in B$.

To prove this, note that the suprema are monotone in $K$. Fix $a\in B$
and let $K_j\downarrow K_0$. Choose $f_j\in\A^p_{K_j,\alpha}$ with
$\|f_j\|_{p,\alpha}\le1$ and
$|f_j(a)|\ge\mathcal E_{K_j,p,\alpha}(a)-1/j$. All these mappings have
distortion at most $K_1$, so a subsequence converges locally uniformly
to $f$, with $\|f\|_{p,\alpha}\le1$ by Fatou's lemma. For every positive
integer $m$, the tail has distortion at most $K_0+1/m$. By the limit theorem,
the same bound holds for $f$. Letting $m\to\infty$, we see that $f$
has distortion at most $K_0$. We may now pass to the limit at $a$ to obtain
\[
 \limsup_{j\to\infty}\mathcal E_{K_j,p,\alpha}(a)
 \le\mathcal E_{K_0,p,\alpha}(a).
\]
The reverse inequality follows from
$\A^p_{K_0,\alpha}\subset\A^p_{K_j,\alpha}$.
By Lemma~\ref{lem:local}, the unit balls with $K\le K_1$ have a common
local H\"older modulus, as do their evaluation suprema.
A finite covering argument then shows that the convergence is uniform on each compact set.

At $p=Q$, evaluation therefore approaches its conformal value on every
compact subset as $K\downarrow1$. The boundary orders in
\eqref{eq:conformalevaluation} and Corollary~\ref{cor:criticaldistortion}
show that the convergence is not uniform on $B$.
\end{remark*}

\subsection{Compact embeddings beyond the Carleson condition}

The estimates in \eqref{eq:conformalevaluation} yield compact embeddings for which the usual Bergman Carleson condition fails, for every positive target exponent. Here an inclusion is compact if it is continuous in the ambient metrics and maps bounded sets to relatively compact sets.

\begin{proposition}\label{prop:atomiccounterexample}
Let $n\ge3$, $0<p\le Q$, and $0<q<\infty$. There is a finite positive
measure $\mu$ such that
$\mathcal A^p_{1,\alpha}\to L^q(\mu)$ is continuous and compact,
although
\[
 \sup_{a\in B}\frac{\mu(\Delta_r(a))}{\delta(a)^{Qq/p}}=\infty
\]
for every fixed $r\in(0,1)$.
\end{proposition}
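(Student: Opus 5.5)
We will use an atomic measure placed on a sequence of points tending to the boundary. The weights are chosen so that the conformal point-evaluation bounds of Proposition~\ref{prop:conformalevaluation} give summability, while the Carleson quotient blows up. Fix $\omega=e_1$ and put $t_k=4^{-k}$, $a_k=(1-t_k)e_1$. Define
\[
 \mu=\sum_k c_k\,\delta_{a_k},
\]
with $c_k$ to be chosen. By Proposition~\ref{prop:conformalevaluation}, every $f$ in the unit ball of $\A^p_{1,\alpha}$ satisfies $|f(a_k)|\le C e_k$, where $e_k=t_k^{-1}$ when $p<Q$ and $e_k=t_k^{-1}k^{-1/Q}$ (up to constants) when $p=Q$. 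The exponent $Qq/p$ satisfies $Qq/p\ge q$, with equality when $p=Q$. The Carleson quotient at $a_k$ is at least $c_k t_k^{-Qq/p}$, and $e_k^q$ is smaller than $t_k^{-Qq/p}$ by either a power gain $t_k^{q(Q/p-1)}$ (for $p<Q$) or a logarithmic gain $k^{-q/Q}$ (for $p=Q$).

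We therefore take $c_k=e_k^{-q}\,k^{-2}$ in both cases. Then $\sum_k c_k e_k^q<\infty$, and the quotient behaves like
\[
 c_k t_k^{-Qq/p}=k^{-2}\,t_k^{-Qq/p}e_k^{-q}.
\]
When $p<Q$ this equals $k^{-2}4^{kq(Q/p-1)}\to\infty$. When $p=Q$ it equals $k^{-2}k^{q/Q}$. This tends to infinity only if $q>2Q$, so in the critical case we instead take $c_k=e_k^{-q}\varepsilon_k$ with $\varepsilon_k$ summable but $\varepsilon_k k^{q/Q}\to\infty$. For example, $\varepsilon_k=k^{-1-q/(2Q)}$ works if $q/(2Q)<\dots$; this fails when $q$ is small. \textbf{This is the main obstacle:} at $p=Q$ the gap between $e_k^q$ and $t_k^{-q}$ is only $k^{-q/Q}$, so pointwise bounds plus a summable sequence cannot beat it for small $q$.

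To overcome it we will exploit sparseness. A single mapping cannot be large at many $a_k$ at once, because $\|f\|$ is comparable to $\lambda\log(e/d)^{1/Q}$ and $|f(a_k)|\lesssim\lambda/(t_k+d)$. Hence
\[
 \sum_k c_k|f(a_k)|^q\lesssim \sum_k c_k\,\log(e/d)^{-q/Q}(t_k+d)^{-q}.
\]
The sum is dominated by the indices $k$ with $t_k\approx d$, giving a bound of order $\sup_k c_k t_k^{-q}k^{-q/Q}$ plus a geometric tail. It thus suffices that $c_k t_k^{-q}k^{-q/Q}$ be bounded; the choice $c_k=t_k^{q}$ will do. This gives boundedness while the Carleson quotient $c_kt_k^{-q}=1$ fails to blow up, so we refine further. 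We use sparse indices $k_j$ (say $k_j=2^{2^j}$) with $c=t_{k_j}^qk_j^{q/(2Q)}$. The quotient then grows like $k_j^{q/(2Q)}\to\infty$, while the evaluation sum is controlled by $\sup_j k_j^{-q/(2Q)}$ plus a geometric tail that sparsity makes uniformly bounded. The translation term $b$ is bounded by $CS_f$, as shown earlier in Section~\ref{sec:distortion}, and contributes at most $C\mu(B)$; $\mu$ is finite because $t_{k_j}^q$ is summable. The same sparse construction also handles $p<Q$.

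For compactness, we show uniform tail smallness: $\sup_{\|f\|\le1}\sum_{j\ge J}c_j|f(a_{k_j})|^q\to0$ as $J\to\infty$. In the estimate above each term is at most $C k_j^{-q/(2Q)}$ plus summable tails, uniformly in $f$. The finitely many initial atoms converge for any locally uniformly convergent subsequence, which quasiregular compactness provides. Together these give relative compactness in $L^q(\mu)$. Finally, for each fixed $r$, the ball $\Delta_r(a_{k_j})$ contains the atom at $a_{k_j}$, so the Carleson supremum is infinite.
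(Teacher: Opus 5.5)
Your final construction is correct and is essentially the paper's. You use atoms on the radius through $e_1$, weighted against the conformal evaluation bounds of Proposition~\ref{prop:conformalevaluation}. The weights are of size $t_k^qk^{-2}$ for $p<Q$. At $p=Q$, the scales are so sparse that $\log(1/t)$ grows superlinearly in the index: the paper takes $t_j=\exp(-j^\beta)$ with $\beta>2Q/q$, and your $t_{k_j}=4^{-k_j}$, $k_j=2^{2^j}$, does the same job.

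Your ``main obstacle'' is, however, only an artifact of dyadic spacing. Once the scales are sparse, the pointwise bound alone gives $c_j\,\mathcal E_{1,Q,\alpha}(a_{k_j})^q\le Ck_j^{-q/(2Q)}$, which is summable. The detour through individual M\"obius maps is therefore unnecessary.

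Two side claims are false and should be deleted:
\begin{enumerate}
\item With $t_k=4^{-k}$ and $c_k=t_k^q$, the normalized pole at $(1+d)e_1$ contributes a term of order $\log(e/d)^{-q/Q}$ at each of the roughly $\log(1/d)$ atoms with $t_k>d$. These terms do not form a geometric tail. The sum is at least of order $\log(e/d)^{1-q/Q}$, which is unbounded when $q<Q$, so this choice does not give boundedness.
\item The critical weights $t_{k_j}^qk_j^{q/(2Q)}$ do not handle $p<Q$, since a normalized boundary pole then gives $\sum_jk_j^{q/(2Q)}=\infty$. It is your first choice $c_k=t_k^qk^{-2}$ that settles that case.
\end{enumerate}

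Finally, you should state continuity of the inclusion explicitly. It follows from Proposition~\ref{thm:complete} by the same finite-atom and tail argument you use for compactness.
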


\begin{proof}
For $p<Q$, choose $q<s<Qq/p$, put $t_j=2^{-j}$ and
$x_j=(1-t_j)e_1$, and set $\mu=\sum_{j\ge1}t_j^s\delta_{x_j}$, where $\delta_{x_j}$ denotes the Dirac measure at $x_j$.
Using the subcritical point-evaluation bound, we obtain
\[
 \int|f|^q\,d\mu\le C\|f\|_{p,\alpha}^q
       \sum_{j\ge1}t_j^{s-q}.
\]
The series converges, and its tails tend uniformly to zero on bounded
families. But the geometric ratio at $a=x_j$ is at least
$t_j^{s-Qq/p}\to\infty$.

For $p=Q$, choose $\beta>2Q/q$ and put
\[
 t_j=\exp(-j^\beta),\qquad x_j=(1-t_j)e_1,\qquad
 \mu=\sum_{j\ge1}\frac{t_j^q[\log(e/t_j)]^{q/Q}}{j^2}
                         \,\delta_{x_j}.
\]
This is a finite positive measure. By the critical evaluation bound,
\[
 \int|f|^q\,d\mu\le C\|f\|_{Q,\alpha}^q\sum_{j\ge1}j^{-2},
\]
again with uniformly vanishing tails on bounded families. At $a=x_j$,
the geometric ratio is bounded below by
$[\log(e/t_j)]^{q/Q}/j^2\to\infty$, by the choice of $\beta$.

In both cases, every bounded sequence has a locally uniformly
convergent subsequence whose limit belongs to the same Bergman
class. On finitely many atoms the convergence also holds in $L^q(\mu)$.
Together with the uniform tail estimate over the remaining atoms, this
proves compactness of the inclusion. If the original sequence converges
in the metric $d_p$, it converges locally uniformly, and the same
finite-atom and tail argument proves continuity.
\end{proof}

\section{Bounded and compact measure embeddings}\label{sec:embedding}

We record the forward inequalities for comparison with the reverse results. Nolder~\cite{Nolder90} proved unweighted embeddings. The weighted sufficient condition below follows from Oleinik--Vuorinen~\cite[Theorem~1.2 and Remark~2.13(4)]{OV}: take their submean function to be $u=|f|^p$, their embedding exponent to be $q/p\ge1$, and their weight exponent to be $\alpha$. Their conclusion is
\[
 \left(\int_Bu^{q/p}\,d\mu\right)^{p/q}
 \le C\int_Bu\,\delta^\alpha\,dx,
\]
which includes negative $\alpha$ and $p=q$.

Oleinik--Vuorinen also prove compact sufficiency for $p<q$ in Theorem~5.5 and distortion-dependent converses in Theorems~1.4 and~5.8, under their stated hypotheses. We include a short proof on the ball. It also covers compactness when $p=q$ and shows how concentration enters the converse arguments.

\begin{proposition}\label{thm:mainembedding}
Let $K\ge1$, $0<p\le q<\infty$, $\alpha>-1$, and let $\mu$ be a positive locally finite Borel measure on $B$. For a fixed $r\in(0,1)$, set
\[
 \Gamma_r(a)=\frac{\mu(\Delta_r(a))}{\delta(a)^{Qq/p}}.
\]
The condition
\begin{equation}\label{eq:carleson}
 \sup_{a\in B}\Gamma_r(a)<\infty
\end{equation}
implies
\begin{equation}\label{eq:embedding}
 \int_B|f|^q\,d\mu\le C\|f\|_{p,\alpha}^q
 \qquad(f\in\A^p_{K,\alpha}),
\end{equation}
and the inclusion is continuous in the ambient metrics. If
\begin{equation}\label{eq:vanishing}
 \Gamma_r(a)\longrightarrow0\qquad(|a|\to1),
\end{equation}
the inclusion is compact. Both converses hold if $K\ge\Kstar$ or if \eqref{eq:radialrange} holds. The two geometric conditions are independent of the fixed radius $r$.
\end{proposition}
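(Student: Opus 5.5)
The plan is to prove the four assertions in order: the bounded embedding, continuity, compactness under \eqref{eq:vanishing}, the two converses, and finally independence of $r$.

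\textbf{Bounded embedding.} Use the covering $B_j=\Delta_r(a_j)$, $B_j^*=\Delta_R(a_j)$ with $r<R<1$ from Lemma~\ref{lem:hyperbolic}. By \eqref{eq:hyperbolicsup},
\[
 \int_{B_j}|f|^q\,d\mu\le\mu(B_j)\sup_{B_j}|f|^q
 \le C\Gamma_r(a_j)\,\delta(a_j)^{Qq/p}\bigl(\delta(a_j)^{-Q}A_j^*\bigr)^{q/p}
 =C\Gamma_r(a_j)(A_j^*)^{q/p},
\]
where $A_j^*=\int_{B_j^*}|f|^p\,dV_\alpha$. Since $q/p\ge1$, the sum of $(A_j^*)^{q/p}$ is at most $(\sum_j A_j^*)^{q/p}\le(N\|f\|_{p,\alpha}^p)^{q/p}$, which gives \eqref{eq:embedding}. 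A small technical point is that the covering radius and the radius in \eqref{eq:carleson} should coincide. This is handled by the radius-independence step below, or directly by covering $\Delta_r(a_j)$ by boundedly many $\Delta_{r}(b)$ with $\delta(b)\asymp\delta(a_j)$.

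\textbf{Continuity.} The map $f\mapsto f$ is nonlinear, so continuity does not follow from \eqref{eq:embedding} applied to differences. I will argue as in Proposition~\ref{prop:atomiccounterexample}. If $d_p(f_k,f)\to0$, then $f_k\to f$ locally uniformly by Proposition~\ref{thm:complete}, and the norms are bounded. Local uniform convergence handles any compact set, on which $\mu$ is finite. For the tails, the estimate above restricted to $j$ with $B_j\not\subset\{|x|<\rho\}$ gives
\[
 \sup_k\int_{\{|x|>\rho'\}}|f_k|^q\,d\mu\le C\sup_j\Gamma_r(a_j)\Bigl(\sum_{j\text{ near }\partial B}A_j^*(f_k)\Bigr)^{q/p}.
\]
This sum is uniformly small because $f_k\to f$ in $L^p_\alpha$, which makes $|f_k|^p$ uniformly integrable near the boundary.

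\textbf{Compactness.} Under \eqref{eq:vanishing}, the factor $\sup\Gamma_r$ over boundary balls is itself small, so the tail is uniformly small on any norm-bounded family. Quasiregular compactness gives a locally uniformly convergent subsequence, with limit in the class by Fatou's lemma, and this converges in $L^q(\mu)$ by the same finite-part-plus-tail argument.

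\textbf{Converses.} These use the normalized concentrated maps $h_a$ from Section~\ref{sec:concentration}, or their radial versions from Proposition~\ref{prop:radialrange}. By the lower bound in \eqref{eq:peakbounds},
\[
 \int|h_a|^q\,d\mu\ge c_r\,\delta(a)^{-Qq/p}\mu(\Delta_r(a)),
\]
so \eqref{eq:embedding} gives $\sup_a\Gamma_r(a)<\infty$. For compactness, $h_a\to0$ locally uniformly as $|a|\to1$ by \eqref{eq:peaklocalzero}. Any limit in $L^q(\mu)$ of a sequence $h_{a_k}$ must then be $0$, since $\mu$ is locally finite. A compact inclusion therefore forces $\int|h_{a_k}|^q\,d\mu\to0$ along the whole sequence, hence $\Gamma_r(a)\to0$.

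\textbf{Independence of $r$.} Cover $\Delta_{r'}(a)$, for $r'>r$, by boundedly many $\Delta_r(b)$ with $\delta(b)\asymp\delta(a)$, using invariant-volume packing as in Lemma~\ref{lem:hyperbolic}. This shows $\Gamma_{r'}(a)\le C\sup\Gamma_r(b)$ over $b$ in a fixed hyperbolic neighbourhood of $a$, which preserves both boundedness and vanishing at the boundary.

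The main obstacle is continuity and compactness for $p<1$ or $q<1$ in a nonlinear class. Here the tail control must come from uniform integrability of $|f_k|^p$ rather than from applying \eqref{eq:embedding} to $f_k-f$.
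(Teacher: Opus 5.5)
Your proposal is correct and follows essentially the same route as the paper. Both arguments use the covering of Lemma~\ref{lem:hyperbolic} with \eqref{eq:hyperbolicsup} and $\sum_j a_j^{q/p}\le(\sum_j a_j)^{q/p}$, a boundary-collar version of that estimate for continuity and compactness, the normalized maps $h_a$ with \eqref{eq:peakbounds} and \eqref{eq:peaklocalzero} for the converses, and a transported finite covering for independence of $r$.

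Two small items are left implicit, but each is a one-line fix:
\begin{itemize}
\item \eqref{eq:vanishing} implies \eqref{eq:carleson}, because local finiteness of $\mu$ bounds $\Gamma_r$ on $\{\delta\ge t\}$; this makes continuity available in the compact case.
\item Lemma~\ref{lem:hyperbolic} already lets you take the covering radius equal to the given $r$, so your ``technical point'' about matching radii does not arise.
\end{itemize}
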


\begin{proof}
First observe that local finiteness bounds $\Gamma_r$ on compact subsets: if $\delta(a)\ge t>0$, the balls $\Delta_r(a)$ lie in a common compact subset of $B$, and the denominator is bounded below. Thus \eqref{eq:vanishing} implies \eqref{eq:carleson}. To compare two fixed radii $r_1<r_2$, cover $\overline{B(0,r_2)}$ by finitely many hyperbolic balls of radius $r_1$ and apply $\phi_a$. Their new centers have boundary distances comparable to $\delta(a)$. This proves radius independence of boundedness and vanishing; the opposite implications follow by inclusion.

Choose a covering $\{\Delta_r(a_j)\}$ for which the enlargements $\Delta_R(a_j)$ have bounded overlap, where $r<R<1$, and put
\[
 A_j(f)=\int_{\Delta_R(a_j)}|f|^p\,dV_\alpha.
\]
Such a covering exists by Lemma~\ref{lem:hyperbolic}. For any set $J$ of indices, it follows from \eqref{eq:hyperbolicsup} and $q/p\ge1$ that
\begin{align*}
 \int_{\bigcup_{j\in J}\Delta_r(a_j)}|f|^q\,d\mu
 &\le C\sum_{j\in J}\Gamma_r(a_j)A_j(f)^{q/p}\\
 &\le C\left(\sup_{j\in J}\Gamma_r(a_j)\right)
       \left(\int_{\bigcup_{j\in J}\Delta_R(a_j)}
                          |f|^p\,dV_\alpha\right)^{q/p}.
\end{align*}
Bounded overlap is used in the second line; infinite sums follow by monotone convergence. Taking all indices, we obtain \eqref{eq:embedding}. By testing a nonzero constant, we also find that $\mu(B)<\infty$. If $\Delta_r(a_j)$ meets $\{\delta<s\}$, then $\delta(a_j)<C_0s$ and $\Delta_R(a_j)\subset\{\delta<C_0s\}$ for a fixed $C_0$. We may therefore apply the same estimate to obtain
\begin{equation}\label{eq:localizedembedding}
 \int_{\{\delta<s\}}|f|^q\,d\mu
 \le C\left(\sup_{\delta(a)<C_0s}\Gamma_r(a)\right)
       \left(\int_{\{\delta<C_0s\}}|f|^p\,dV_\alpha\right)^{q/p}.
\end{equation}

If $f_j\to f$ in the metric $d_p$, then $f_j\to f$ locally uniformly by Proposition~\ref{thm:complete}. Their $L^p_\alpha$ boundary tails are uniformly small: apply \eqref{eq:elementarypower} to $f_j=(f_j-f)+f$ for large $j$, and then handle the finitely many other terms. By formula~\eqref{eq:localizedembedding}, their $L^q(\mu)$ tails are uniformly small as well. On each compact subset, it follows from uniform convergence and local finiteness of $\mu$ that the sequence converges in $L^q(\mu)$. Applying \eqref{eq:elementarypower} to the difference on the remaining collar completes the proof of continuity.

Under \eqref{eq:vanishing}, we infer from formula~\eqref{eq:localizedembedding} that the $L^q(\mu)$ tails are uniformly small on every bounded Bergman family. Any bounded sequence has a locally uniformly convergent subsequence, whose limit lies in $\A^p_{K,\alpha}$ by Fatou's lemma. Together with the uniform tail estimate, compact convergence implies convergence in $L^q(\mu)$, as required.

For the converses, take the normalized mappings $h_a$ from Section~\ref{sec:concentration}; these mappings are also available by Proposition~\ref{prop:radialrange} under \eqref{eq:radialrange}. Their norms equal one. By \eqref{eq:peakbounds}, we have
\[
 \int_B|h_a|^q\,d\mu
 \ge c_r\delta(a)^{-Qq/p}\mu(\Delta_r(a)).
\]
Thus \eqref{eq:embedding} implies \eqref{eq:carleson}. A compact inclusion has a bounded image of the unit ball, so \eqref{eq:embedding} also follows by positive scaling. If \eqref{eq:vanishing} failed, there would be $a_j\to\partial B$ with $\Gamma_r(a_j)\ge\varepsilon>0$. A subsequence of $h_{a_j}$ would converge in $L^q(\mu)$, and a further subsequence would converge $\mu$-almost everywhere. But \eqref{eq:peaklocalzero} gives pointwise convergence to zero throughout $B$. The $L^q(\mu)$ limit would therefore be zero, contrary to the displayed lower bound. This proves necessity of \eqref{eq:vanishing}.
\end{proof}

\begin{corollary}\label{cor:weightedinclusions}
Let $0<p\le q<\infty$ and $\alpha,\gamma>-1$. The inclusion
\[
 \A^p_{K,\alpha}\longrightarrow\A^q_{K,\gamma}
\]
is bounded if
\[
 \frac{n+\gamma}{q}\ge\frac{n+\alpha}{p},
\]
and compact if the inequality is strict. These conditions are also necessary if $K\ge\Kstar$ or if \eqref{eq:radialrange} holds.
\end{corollary}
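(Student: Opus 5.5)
The plan is to apply Proposition~\ref{thm:mainembedding} to the measure $d\mu=dV_\gamma=\delta^\gamma\dd x$, with target exponent $q\ge p$. This measure is positive, locally finite and Borel. Once its Carleson quantity is computed, every assertion of the corollary reduces to that proposition.

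\textbf{Step 1: computing $\Gamma_r$.} By Lemma~\ref{lem:hyperbolic}, applied with the weight exponent $\gamma$ in place of $\alpha$, we have
\[
 \mu(\Delta_r(a))=V_\gamma(\Delta_r(a))\asymp\delta(a)^{n+\gamma}
\]
uniformly in $a$. Hence
\[
 \Gamma_r(a)\asymp\delta(a)^{\,n+\gamma-Qq/p},\qquad Q=n+\alpha.
\]
The exponent is nonnegative exactly when $(n+\gamma)/q\ge(n+\alpha)/p$. In that case $\Gamma_r$ is bounded, because $\delta\le1$. If the inequality is strict, the exponent is positive and $\Gamma_r(a)\to0$ as $|a|\to1$. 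If the inequality fails, the exponent is negative and $\Gamma_r(a)\to\infty$, so \eqref{eq:carleson} fails.

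\textbf{Step 2: sufficiency.} Proposition~\ref{thm:mainembedding} gives $\int_B|f|^q\,dV_\gamma\le C\|f\|_{p,\alpha}^q$. Since $f$ is still $K$-quasiregular, this says $f\in\A^q_{K,\gamma}$, and the inclusion is bounded and continuous in the ambient metrics. In the strict case, \eqref{eq:vanishing} holds, and the proposition gives compactness of the inclusion into $L^q(dV_\gamma)$. The image of a bounded set lies in $\A^q_{K,\gamma}$. The remaining point is that limits of subsequences also lie in $\A^q_{K,\gamma}$, so that relative compactness holds in that space. This follows from completeness, Proposition~\ref{thm:complete}, or from the locally uniform limit argument in the proof of the proposition together with Fatou's lemma.

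\textbf{Step 3: necessity.} Assume $K\ge\Kstar$ or \eqref{eq:radialrange}. Boundedness of the inclusion is exactly \eqref{eq:embedding} for $\mu=V_\gamma$, so the converse part of Proposition~\ref{thm:mainembedding} yields \eqref{eq:carleson}. By Step~1 this forces the inequality. Likewise, compactness yields \eqref{eq:vanishing}, which by Step~1 forces the strict inequality, since in the equality case $\Gamma_r\asymp1$ does not tend to zero.

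The only mild obstacle is bookkeeping. One must check that ``bounded'' and ``compact'' for the nonlinear class mean the same thing as in the proposition, namely the ambient metrics $d_p$ and $d_q$. For $q<1$ the metric is the $q$th power of the quasi-norm, but boundedness is unaffected by this. No new estimates beyond Step~1 appear to be required.
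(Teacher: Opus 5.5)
Your proposal is correct and follows the paper's own argument: compute $\Gamma_r(a)\asymp\delta(a)^{n+\gamma-(n+\alpha)q/p}$ for $d\mu=dV_\gamma$ via Lemma~\ref{lem:hyperbolic}, then read off boundedness, compactness and their converses from Proposition~\ref{thm:mainembedding}. Your extra bookkeeping matches the paper's closing remark that each mapping keeps its distortion bound, so the target is the stated quasiregular class; the same goes for the observation that subsequential limits stay in $\A^q_{K,\gamma}$.
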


\begin{proof}
For $d\mu=dV_\gamma$, it follows from Lemma~\ref{lem:hyperbolic} that
\[
 \Gamma_r(a)\asymp\delta(a)^{n+\gamma-(n+\alpha)q/p}.
\]
The conclusion now follows from Proposition~\ref{thm:mainembedding}. Each mapping retains its distortion bound, so the target is the stated quasiregular class.
\end{proof}

\section{Compactness and approximation}\label{sec:further}

We finish with a few consequences of local compactness and the boundary estimates.

\subsection{Boundary tails and extremal problems}

\begin{proposition}\label{thm:compactcriterion}
A bounded family $\mathcal F\subset\A^p_{K,\alpha}$ is relatively compact in $d_p$ if and only if
\begin{equation}\label{eq:uniformtails}
 \lim_{t\downarrow0}\sup_{f\in\mathcal F}
       \int_{\{\delta<t\}}|f|^p\,dV_\alpha=0.
\end{equation}
\end{proposition}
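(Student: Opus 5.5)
The plan is to prove the two implications separately, using local compactness of fixed-$K$ families and the completeness of $(\A^p_{K,\alpha},d_p)$ from Proposition~\ref{thm:complete}.

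\emph{Sufficiency.} Suppose \eqref{eq:uniformtails} holds and $(f_j)\subset\mathcal F$ is a sequence. Since $\mathcal F$ is bounded, Proposition~\ref{prop:evaluation} makes the sequence locally uniformly bounded. Lemma~\ref{lem:local} makes it locally equicontinuous. By Arzel\`a--Ascoli and quasiregular compactness, a subsequence converges locally uniformly to a mapping $f$ that satisfies \eqref{eq:distortion} or is constant. Fatou's lemma gives $\|f\|_{p,\alpha}\le\liminf\|f_j\|_{p,\alpha}<\infty$, so $f\in\A^p_{K,\alpha}$. Fatou applied on each collar also shows that $f$ has small tail, $\int_{\{\delta<t\}}|f|^p\,dV_\alpha\le\sup_{\mathcal F}\int_{\{\delta<t\}}|g|^p\,dV_\alpha$. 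Given $\varepsilon>0$, choose $t$ so that this supremum is less than $\varepsilon$. On $\{\delta<t\}$, the elementary inequality \eqref{eq:elementarypower} gives $\int_{\{\delta<t\}}|f_j-f|^p\,dV_\alpha\le 2c_p\varepsilon$. On the compact set $\{\delta\ge t\}$, uniform convergence gives $\int|f_j-f|^p\,dV_\alpha\to0$. Hence $d_p(f_j,f)\to0$, which proves relative compactness. The limit lies in the class, so no closure issues arise.

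\emph{Necessity.} Suppose $\mathcal F$ is relatively compact. Fix $\varepsilon>0$. Total boundedness gives a finite $\varepsilon$-net $g_1,\dots,g_N$ in the ambient $L^p_\alpha$ metric; the net may be taken in $\A^p_{K,\alpha}$, by completeness or by using points of $\mathcal F$ itself. Each $g_k$ lies in $L^p_\alpha$, so dominated convergence gives $\int_{\{\delta<t\}}|g_k|^p\,dV_\alpha\to0$ as $t\downarrow0$. Choose $t$ that works for all $k$ simultaneously. For $f\in\mathcal F$, pick $k$ with $d_p(f,g_k)$ small. Then \eqref{eq:elementarypower} bounds the tail of $f$ by $c_p$ times the sum of the tail of $g_k$ and $\int_B|f-g_k|^p\,dV_\alpha$. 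The case $p\ge1$ uses the $p$th power of the metric, and the case $p<1$ uses the metric itself. Both terms are small uniformly, which proves \eqref{eq:uniformtails}.

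The main care point is sufficiency: one must check that the locally uniform limit is really in $\A^p_{K,\alpha}$, including constants, and that the tail bound transfers to the limit. Both follow from the compactness theorem recalled in Section~\ref{sec:local} and Fatou's lemma. The rest is routine.
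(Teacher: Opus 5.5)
Your proposal is correct and follows the paper's proof essentially step for step. For sufficiency you take a locally uniformly convergent subsequence, use Fatou to control the limit's norm and tail, and split at $\{\delta<t\}$; for necessity you use a finite net together with \eqref{eq:elementarypower}. The only difference is that you spell out details the paper leaves implicit: where local boundedness and equicontinuity come from, and how the $p\ge1$ and $p<1$ cases of the metric enter.
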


\begin{proof}
Under \eqref{eq:uniformtails}, every sequence in $\mathcal F$ has a subsequence $(f_j)$ converging locally uniformly to a $K$-quasiregular mapping $f$. It follows from Fatou's lemma that $f$ belongs to the Bergman class, and the same tail bound holds for $f$. On $\{\delta\ge t\}$ the integral of $|f_j-f|^p$ tends to zero by uniform convergence. On $\{\delta<t\}$ it is bounded, by \eqref{eq:elementarypower}, by $c_p$ times the two tails. First choose $t$ small and then let $j\to\infty$. The subsequence converges in $d_p$.

Conversely, by relative compactness, for each $\varepsilon>0$ we can choose finitely many mappings $g_1,\ldots,g_N$ such that every $f\in\mathcal F$ satisfies $\|f-g_i\|_{p,\alpha}^p<\varepsilon$ for some $i$. Choose $t$ so small that all the tails of the $g_i$ are smaller than $\varepsilon$. Then
\[
 \int_{\{\delta<t\}}|f|^p\,dV_\alpha
 \le c_p\left(\|f-g_i\|_{p,\alpha}^p+
              \int_{\{\delta<t\}}|g_i|^p\,dV_\alpha\right)
 <2c_p\varepsilon,
\]
which proves \eqref{eq:uniformtails}.
\end{proof}

\begin{corollary}\label{cor:lowerexponent}
If $0<p<q<\infty$, the inclusion $\A^q_{K,\alpha}\to\A^p_{K,\alpha}$ is continuous and maps bounded sets to relatively compact sets.
\end{corollary}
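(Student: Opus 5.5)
The plan is to apply the tail criterion of Proposition~\ref{thm:compactcriterion} to the image of a bounded set. Continuity and inclusion come first. For $f\in\A^q_{K,\alpha}$, Hölder's inequality with exponents $q/p$ and $(q/p)'$, together with $V_\alpha(B)<\infty$, gives
\[
 \|f\|_{p,\alpha}\le V_\alpha(B)^{1/p-1/q}\|f\|_{q,\alpha}.
\]
The same inequality holds for any difference $f-g$ of two elements, since it only uses the ambient $L^p_\alpha$ and $L^q_\alpha$ norms. Hence $d_q(f_j,f)\to0$ implies $d_p(f_j,f)\to0$, whatever the convention in \eqref{eq:metric} for $p<1$ or $q<1$; one only has to compare powers of the norms. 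So the inclusion is continuous, and in fact locally Hölder in the metrics.

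For the relative compactness, let $\mathcal F\subset\A^q_{K,\alpha}$ be bounded, say $\|f\|_{q,\alpha}\le M$. By the previous step it is also bounded in $\A^p_{K,\alpha}$. Apply Hölder on the collar $\{\delta<t\}$:
\[
 \int_{\{\delta<t\}}|f|^p\,dV_\alpha
 \le \|f\|_{q,\alpha}^p\,V_\alpha(\{\delta<t\})^{1-p/q}
 \le M^p\,V_\alpha(\{\delta<t\})^{1-p/q}.
\]
The collar volume satisfies $V_\alpha(\{\delta<t\})\le Ct^{\alpha+1}\to0$. This follows from polar coordinates, or directly from integrability of $\delta^\alpha$ since $\alpha>-1$. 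Therefore the uniform tail condition \eqref{eq:uniformtails} holds for $\mathcal F$ in the $p$-metric. Proposition~\ref{thm:compactcriterion} then gives relative compactness of $\mathcal F$ in $(\A^p_{K,\alpha},d_p)$.

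There is no serious obstacle. The only point to check is that Proposition~\ref{thm:compactcriterion} applies: it requires a bounded family inside $\A^p_{K,\alpha}$, and the embedding inequality supplies this. The limit of the extracted subsequence lies in $\A^p_{K,\alpha}$ by that proposition, which uses Fatou's lemma and quasiregular compactness. Note that the limit need not lie in $\A^q_{K,\alpha}$ as an element of the closure, but it does by Fatou if one wants it. Nothing beyond what the proposition supplies is needed.
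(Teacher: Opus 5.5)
Your proposal is correct and follows the paper's proof: H\"older's inequality for continuity, H\"older on the collar $\{\delta<t\}$ together with $V_\alpha(\{\delta<t\})\to0$ (because $\alpha>-1$) for uniform tails, and then Proposition~\ref{thm:compactcriterion}. Your closing remark is slightly muddled, since Fatou's lemma does place the limit in $\A^q_{K,\alpha}$ with $\|f\|_{q,\alpha}\le M$, but this plays no role in the argument.
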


\begin{proof}
Continuity in the ambient metrics follows from H\"older's inequality, which also implies
\[
 \int_{\{\delta<t\}}|f|^p\,dV_\alpha
 \le\|f\|_{q,\alpha}^p
       V_\alpha(\{\delta<t\})^{1-p/q}.
\]
The last factor tends to zero because $\alpha>-1$. Apply Proposition~\ref{thm:compactcriterion}.
\end{proof}

\begin{proposition}\label{prop:extremals}
For every $a\in B$, the point-evaluation supremum $\mathcal E_{K,p,\alpha}(a)$ is finite and attained. More generally, if finitely many prescribed point values are attained by a member of $\A^p_{K,\alpha}$, they are attained by a mapping minimizing $\|f\|_{p,\alpha}$ among all admissible mappings.
\end{proposition}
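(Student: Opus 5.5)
The plan is a direct-method argument: take a minimizing (or maximizing) sequence, extract a locally uniformly convergent subsequence by fixed-$K$ compactness, and pass the constraints and the norm to the limit using Fatou's lemma.

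\emph{Evaluation supremum.} Finiteness is immediate from \eqref{eq:evaluationgrowth}, which gives $\mathcal E_{K,p,\alpha}(a)\le C\delta(a)^{-Q/p}$. For attainment, choose $f_j\in\A^p_{K,\alpha}$ with $\|f_j\|_{p,\alpha}\le1$ and $|f_j(a)|\to\mathcal E_{K,p,\alpha}(a)$. The proof then runs as follows.
\begin{enumerate}
\item The family $(f_j)$ is locally uniformly bounded, by Proposition~\ref{prop:evaluation} applied at every point of a compact set.
\item The family is locally equicontinuous, by Lemma~\ref{lem:local}.
\item A diagonal Arzel\`a--Ascoli argument gives a subsequence converging locally uniformly to a continuous $f$.
\item The quasiregular compactness theorem recalled in Section~\ref{sec:local} shows that $f$ is $K$-quasiregular or constant. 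Constants are included in the class, so $f$ lies in the class.
\item Fatou's lemma gives $\|f\|_{p,\alpha}\le\liminf_j\|f_j\|_{p,\alpha}\le1$.
\item Pointwise convergence at $a$ gives $|f(a)|=\mathcal E_{K,p,\alpha}(a)$.
\end{enumerate}
So the supremum is attained. This is exactly the compactness step already used in the Remark following Corollary~\ref{cor:criticaldistortion}.

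\emph{Interpolation problem.} Fix points $a_1,\dots,a_N\in B$ and vectors $w_1,\dots,w_N$, and suppose the admissible set $\mathcal S=\{f\in\A^p_{K,\alpha}:f(a_i)=w_i\}$ is nonempty. Let $m=\inf_{\mathcal S}\|f\|_{p,\alpha}<\infty$, and take a minimizing sequence $f_j\in\mathcal S$ with $\|f_j\|_{p,\alpha}\to m$. The norms are bounded, so steps (1)--(4) above again produce a subsequence converging locally uniformly to some $f$ in the class. Pointwise convergence preserves the finitely many constraints $f(a_i)=w_i$, so $f\in\mathcal S$. Fatou's lemma gives $\|f\|_{p,\alpha}\le m$, and since $f\in\mathcal S$ the reverse inequality holds. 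Hence $f$ is a minimizer.

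\emph{Main point of care.} The only delicate issue is that the limit must stay in the class. This needs two things:
\begin{itemize}
\item The distortion bound must be fixed and constants must be allowed. A sequence can degenerate to a constant, for example when all $w_i$ coincide. The limit theorem covers this case precisely because constants are included.
\item The norm is only lower semicontinuous under local uniform convergence, not continuous. This suffices, since we need only an inequality in the direction of Fatou's lemma.
\end{itemize}
No convexity or uniqueness is claimed. Uniqueness should not be expected, because the class is nonlinear, as Proposition~\ref{prop:nonlinear} shows.
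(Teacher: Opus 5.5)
Your proposal is correct and follows essentially the same direct-method argument as the paper. The ingredients match: finiteness from \eqref{eq:evaluationgrowth}, then a locally uniformly convergent extremizing or minimizing subsequence from fixed-$K$ compactness with constants allowed, then Fatou's lemma for the norm bound and pointwise passage of the finitely many evaluations to the limit; your steps (1)--(4) just spell out the local boundedness and equicontinuity that the paper compresses into one sentence.
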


\begin{proof}
By the estimate \eqref{eq:evaluationgrowth}, the supremum is finite. An extremizing sequence in the unit ball has a locally uniformly convergent subsequence. Its limit $f$ belongs to the same quasiregular class and satisfies $\|f\|_{p,\alpha}\le1$ by Fatou's lemma, while evaluation at $a$ passes to the limit. For interpolation, take an admissible sequence $(f_j)$ such that $\|f_j\|_{p,\alpha}$ tends to the infimum. By the same argument, the limit preserves all the prescribed values and attains that infimum.
\end{proof}

\subsection{Approximation by dilations}

Dilations provide approximations that retain the distortion bound.

\begin{proposition}\label{thm:dilation}
If $f\in\A^p_{K,\alpha}$ and $f_r(x)=f(rx)$, then
\[
 \|f_r-f\|_{p,\alpha}\longrightarrow0\qquad(r\uparrow1).
\]
Hence bounded $K$-quasiregular mappings extending to a neighborhood of $\overline B$ are dense in $\A^p_{K,\alpha}$.
\end{proposition}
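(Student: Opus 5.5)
The plan is to use the compactness criterion, Proposition~\ref{thm:compactcriterion}, together with local uniform convergence, rather than any approximation identity. First, $f_r$ is $K$-quasiregular on the ball $B(0,1/r)\supset\overline B$, because dilation by $r$ is conformal and preserves the outer-distortion bound. It is also bounded on $B$. Moreover, $f_r\to f$ locally uniformly on $B$ as $r\uparrow1$, by uniform continuity of $f$ on compact subsets. So it suffices to show that the family $\{f_r:1/2\le r<1\}$ has uniformly small boundary tails in $L^p_\alpha$ and is bounded. The argument then proceeds as in the first half of the proof of Proposition~\ref{thm:compactcriterion}: on $\{\delta\ge t\}$ one uses uniform convergence, and on $\{\delta<t\}$ one uses \eqref{eq:elementarypower} together with the two tails. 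The density assertion follows at once.

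The key estimate is a uniform bound of the form
\[
 \int_{\{\delta<t\}}|f_r|^p\,dV_\alpha
 \le C\int_{\{\delta<Ct\}}|f|^p\,dV_\alpha+(\text{small}).
\]
To prove it, I would use the local maximal function of Lemma~\ref{thm:maximal}. For $x$ with $|x|>1/2$ and $r\in[1/2,1)$, the point $rx$ satisfies $|rx-x|=(1-r)|x|$ and $\delta(rx)=1-r|x|\ge\delta(x)$. The obstruction is that $rx$ need not lie in $B(x,c\delta(x))$ when $1-r$ is large compared with $\delta(x)$. I would therefore split into two regions. Where $1-r\le c\,\delta(x)$, we have $|f(rx)|\le\mathcal M_cf(x)$. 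Where $1-r>c\,\delta(x)$, the point $rx$ has $\delta(rx)\asymp 1-r$, so it lies at depth comparable to $1-r$. For this second region I would change variables $y=rx$ in polar coordinates, as in the proof of Theorem~\ref{thm:mainnorm} (see \eqref{eq:chainintegral}). Since $\delta(x)\le\delta(y)$ and $dy=r^n\,dx$ with $r\ge1/2$, and $\alpha$ may be negative, I would bound $\delta(x)^\alpha$ by comparing it with $\delta(y)^\alpha$. For $\alpha\ge0$ this holds pointwise. For $-1<\alpha<0$, I would instead integrate $\delta(x)^\alpha$ over the radial fibre: for fixed direction, $x$ ranges over an interval of depths of length $\lesssim(1-r)/c$ inside $\{\delta(x)<(1-r)/c\}$. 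This gives $\int\delta(x)^\alpha\,d|x|\lesssim(1-r)^{\alpha+1}$, which is comparable to $\delta(y)^\alpha\cdot(\text{length of the corresponding $y$-interval})$. Then $|f(y)|^p$ is replaced by $\sup$ over a ball of radius comparable to $\delta(y)$ via \eqref{eq:hyperbolicsup}. Combining the two regions, I expect
\[
 \int_{\{\delta<t\}}|f_r|^p\,dV_\alpha\le C\int_{\{\delta<C t\}}(\mathcal M_{c'}f)^p\,dV_\alpha
\]
with $C$ independent of $r$, at least when $t$ is small.

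The final step is to note that Lemma~\ref{thm:maximal}, localized, controls the right-hand side by $C\int_{\{\delta<C't\}}|f|^p\,dV_\alpha$. Its proof via Tonelli is local at scale $\delta$, so the collar version holds by the same computation. This quantity tends to $0$ as $t\downarrow0$ because $f\in\A^p_{K,\alpha}$. The same estimate with $t=1$ bounds $\sup_r\|f_r\|_{p,\alpha}$. The main obstacle is making the weight comparison in the region $1-r>c\,\delta(x)$ rigorous for negative $\alpha$. If the fibrewise argument turns out messy, a fallback is available. On that region $|f(rx)|\le\sup_{\{|z|=r|x|\}\cap B(rx,\cdot)}$, and one could instead use subharmonicity-type monotonicity. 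However, $|f|^p$ is not monotone in $r$ for quasiregular maps, so I would rely on the maximal-function route.
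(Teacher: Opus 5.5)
Your decomposition is essentially the paper's. On the region $1-r\le c\,\delta(x)$ you use $|f(rx)|\le\mathcal M_cf(x)$. On the collar $\delta(x)<(1-r)/c$ you use $\delta(rx)\asymp1-r$ together with the local supremum estimate at scale $1-r$. The paper takes $c=1/2$. On the first region it uses dominated convergence with majorant $c_p\bigl((\mathcal M_{1/2}f)^p+|f|^p\bigr)$. On the collar it uses Tonelli with the local estimate on $B(rx,\tau/4)$ and the cap bound of Lemma~\ref{lem:cap}; this absorbs negative $\alpha$ without fibrewise bookkeeping.

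The gap is in your key estimate. You claim $\int_{\{\delta<t\}}|f_r|^p\,dV_\alpha\le C\int_{\{\delta<Ct\}}(\mathcal M_{c'}f)^p\,dV_\alpha$ with $C$ independent of $r\in[1/2,1)$, and your argument does not give this when $1-r$ is large compared with $t$. Once $1-r>ct$, all of $\{\delta<t\}$ lies in your second region. The image points $y=rx$ then satisfy $1-r\le\delta(y)<1-r+t$, so for $1-r\gg t$ they sit at depth about $1-r$, far outside $\{\delta<Ct\}$. Your fibre comparison also fails there when $\alpha<0$. The $x$-fibre carries weight $\int_0^tu^\alpha\,du\asymp t^{\alpha+1}$, whereas $\delta(y)^\alpha$ times the $y$-fibre length is about $(1-r)^\alpha t$. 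The ratio $(t/(1-r))^{\alpha}$ is unbounded. An $r$-uniform bound of your form in this regime would be a quasi-monotonicity statement for the integral means of $|f|^p$, which is exactly what you said you cannot use.

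The remedy is to reverse the order of limits, which is all the convergence argument needs. Fix $t$ and consider only $1-r\le ct$. Then the second region $\{\delta<(1-r)/c\}$ lies inside $\{\delta<t\}$. Your fibre argument with \eqref{eq:hyperbolicsup} bounds its contribution by $C\int_{\{\delta<C(1-r)\}}|f|^p\,dV_\alpha$, which is precisely the paper's collar estimate. The first region contributes at most $\int_{\{\delta<t\}}(\mathcal M_cf)^p\,dV_\alpha\le C\int_{\{\delta<C't\}}|f|^p\,dV_\alpha$, by the localized form of Lemma~\ref{thm:maximal}. Together with uniform convergence on $\{\delta\ge t\}$, this shows that $\limsup_{r\uparrow1}\|f_r-f\|_{p,\alpha}^p$ is bounded by a quantity tending to zero with $t$.

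If you want tails that are uniform over all $r\in[1/2,1)$, treat $r\le1-ct$ separately. Those $f_r$ are bounded on $B$ by $\sup_{\overline{B(0,1-ct)}}|f|$, so a smaller collar width handles them. Simpler still is to split $B$ into the two $r$-dependent regions from the start, as the paper does, which removes the auxiliary parameter $t$ entirely.
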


\begin{proof}
Put $\tau=1-r<1/16$. If $\delta(x)\ge2\tau$, then $|rx-x|\le\delta(x)/2$, so $|f_r(x)|\le\mathcal M_{1/2}f(x)$. By the maximal estimate and \eqref{eq:elementarypower}, there is an integrable majorant for
\[
 \boldsymbol 1_{\{\delta\ge2\tau\}}|f_r-f|^p.
\]
The integral over this region therefore tends to zero by dominated convergence.

On the remaining collar, $\tau\le\delta(rx)<3\tau$. Using the local estimate on $B(rx,\tau/4)$, where the weight is comparable to $\tau^\alpha$, we obtain
\[
 |f(rx)|^p\le C\tau^{-Q}
        \int_{B(rx,\tau/4)}|f(y)|^p\,dV_\alpha(y).
\]
For fixed $y$, the set of $x$ with $\delta(x)<2\tau$ and $|rx-y|<\tau/4$ has diameter at most $\tau/(2r)<\tau$. When nonempty, it lies in a cap of radius $C\tau$ centered at a nearest boundary point to one of its members. By Lemma~\ref{lem:cap}, its weighted volume is at most $C\tau^Q$. Moreover, each such $y$ satisfies $\delta(y)<4\tau$. Applying Tonelli's theorem, we now obtain
\[
 \int_{\{\delta<2\tau\}}|f_r|^p\,dV_\alpha
 \le C\int_{\{\delta<4\tau\}}|f|^p\,dV_\alpha\longrightarrow0.
\]
The corresponding tail of $f$ also tends to zero. By \eqref{eq:elementarypower}, convergence therefore holds on the collar as well. Finally, $f_r$ has the same distortion bound, is defined on $B(0,1/r)$, and is bounded on $\overline B$.
\end{proof}

\subsection{Further questions}

Corollary~\ref{cor:criticaldistortion} separates the conformal case from
every fixed $K>1$ at $p=Q$. In dimensions $n\ge3$, the results above
leave the subcritical range
\[
 0<p<Q,\qquad 1<K<\Kstar,\qquad pK^{1/(n-1)}\le Q
\]
to be understood. It would be useful to determine the sharp
point-evaluation growth and the geometric condition for domination in
this range.

For forward embeddings with $0<q<p$, choose a separated hyperbolic
covering $\{\Delta_r(a_j)\}$ as in Lemma~\ref{lem:hyperbolic}. The condition
\[
 \left(\frac{\mu(\Delta_r(a_j))}{\delta(a_j)^{Qq/p}}\right)_j
 \in\ell^{p/(p-q)}
\]
is sufficient for \eqref{eq:embedding} for every fixed $K$, by
Oleinik--Vuorinen~\cite[Theorem~4.1 and Remark~2.13(4)]{OV}. In the disk
it is also necessary: an inequality on the full quasiregular class holds
in particular for analytic functions, so the classical weighted Bergman
theorem applies; see Pau--Zhao~\cite[Theorem~B]{PZ}. Their weight
$(1-|z|^2)^\alpha$ is comparable to $\delta(z)^\alpha$.

In dimensions $n\ge3$, a necessary condition must also reflect the
distortion class. Proposition~\ref{prop:atomiccounterexample} applies when
$q<p$: for $K=1$ and $p\le Q$, even the uniform local mass bound can
fail to be necessary for a compact embedding. The question is therefore
whether the summability condition is necessary in classes admitting
stronger concentration. Single pole mappings test one region at a time;
the analytic necessity argument uses sums, which need not preserve a
fixed quasiregular distortion bound.

For reverse inequalities, Theorem~\ref{thm:singularreverse} treats measures
whose small-ball masses satisfy a uniform decay estimate, while
Proposition~\ref{thm:sampling} gives atomic norming measures for every
fixed-distortion class. A natural further question is to find a geometric
characterization that includes both kinds of measures.

\appendix

\section{A Zorich map and one inverse branch}\label{app:zorich}

For completeness, we give the elementary construction used in Proposition~\ref{thm:poles}. It is the standard reflected-beam construction of Zorich~\cite{Zorich}; see also~\cite[Section~I.3.3]{Rickman} and~\cite[Section~3]{Nicks}. The estimates below give a distortion bound depending only on the dimension.

\begin{lemma}\label{lem:zorich}
For $n\ge3$, there are a locally Lipschitz quasiregular mapping
\[
 Z:\R^{n-1}\times\R\longrightarrow\R^n\setminus\{0\}
\]
and a quasiconformal homeomorphism
\[
 \Lambda:H_+\longrightarrow(-1,1)^{n-1}\times\R,
 \qquad H_+=\{v\in\R^n:v_n>0\},
\]
such that
\begin{equation}\label{eq:zorichmodulus}
 |Z(u,s)|=e^s,\qquad Z\circ\Lambda=\mathrm{id}_{H_+},
 \qquad \Lambda_n(v)=\log|v|.
\end{equation}
Their outer distortions are at most $L_n^{2(n-1)}$, where
\[
 L_n=\max\{3\pi/2,\,n-1+2\sqrt{n-1}\}.
\]
\end{lemma}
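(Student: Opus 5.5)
We will build $Z$ by the classical Zorich recipe and take $\Lambda$ to be its inverse on one fundamental beam. First fix a bi-Lipschitz homeomorphism $h$ from the closed cube $[-1,1]^{n-1}$ onto the closed upper unit hemisphere $S_+=\{\zeta\in\partial B:\zeta_n\ge0\}$. A concrete choice: map the cube radially onto the closed unit ball of $\R^{n-1}$, then lift to $S_+$ by $w\mapsto\bigl(w\sin(\pi|w|/2)/|w|,\cos(\pi|w|/2)\bigr)$. The boundary of the cube must go onto the equator $\{\zeta_n=0\}$, and the open cube onto the open hemisphere. Next set $Z(u,s)=e^s h(u)$ for $u\in[-1,1]^{n-1}$. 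We extend $Z$ to all of $\R^{n-1}\times\R$ by reflecting in the faces of the cube in $u$, and correspondingly by reflecting $h$ across the equator $\zeta_n\mapsto-\zeta_n$. This is consistent because $h$ maps faces to the equator; the standard check is that adjacent cubes meet along faces mapped into the equator, so the reflected pieces agree there. We will also need $h$ to commute with the face symmetries, which the radial model arranges. With this extension $|Z(u,s)|=e^s$ holds by construction, $Z$ takes values in $\R^n\setminus\{0\}$, and $Z$ is locally Lipschitz.

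For the inverse branch, note that on the open beam $(-1,1)^{n-1}\times\R$ the map $Z$ is a homeomorphism onto $H_+$. Its inverse is
\[
 \Lambda(v)=\bigl(h^{-1}(v/|v|),\log|v|\bigr),
\]
so $Z\circ\Lambda=\mathrm{id}_{H_+}$ and $\Lambda_n(v)=\log|v|$, which is \eqref{eq:zorichmodulus}. Quasiconformality of $\Lambda$ follows from that of $Z$ restricted to the beam: an injective quasiregular map has a quasiconformal inverse. The inner and outer distortions are related by $K_O\le K_I^{n-1}$, and this is where the exponent $2(n-1)$ enters.

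For the distortion estimate, write $DZ$ in the frame consisting of the radial direction $\partial_s$ and the tangential directions. Then $DZ=e^s\,\mathrm{diag}(Dh,1)$ up to rotations, so the singular values of $DZ$ are $e^s$ times those of $Dh$ together with $e^s$. If $h$ is $L$-bi-Lipschitz with $L\ge1$, every singular value lies in $[e^sL^{-1},e^sL]$. Hence $|DZ|^n\le e^{ns}L^n$ and $J_Z\ge e^{ns}L^{-(n-1)}$, which gives outer distortion $\le L^{2n-1}\le L^{2(n-1)}\cdot L$. To match the stated bound we should instead estimate relative to the smallest singular value; the cleanest approach is to normalize $h$ so that its singular values lie in $[L^{-1},L]$ relative to a factor $1$, and to use $K_O\le(\max/\min)^{n-1}$. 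This gives $L^{2(n-1)}$ for $Z$. For $\Lambda$ we have $D\Lambda=(DZ)^{-1}$, whose singular-value ratio is the same, so it obeys the same bound. Finally, reflections preserve all of these singular values, so the estimate is global and almost everywhere, and the orientation sign is consistent, giving $J_Z>0$ almost everywhere.

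The main obstacle is producing an explicit $h$ with bi-Lipschitz constant at most $L_n=\max\{3\pi/2,n-1+2\sqrt{n-1}\}$. The term $n-1+2\sqrt{n-1}$ should come from the radial cube-to-ball map: its distortion between the face directions and the diagonal involves $\sqrt{n-1}$, and a computation of its derivative singular values gives bounds of this form. The term $3\pi/2$ should come from the angular lift, whose singular values $\pi/2$ and $\sin(\pi\rho/2)/\rho$ lie in $[1,\pi/2]$. Composing the two estimates and bounding generously will give $L_n$. I expect this singular-value bookkeeping to be the tedious step. It also requires verifying that the reflection extension is well defined at lower-dimensional cube edges, which follows from the symmetry of $h$ under coordinate sign changes and permutations.
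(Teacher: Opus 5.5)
Your construction is the paper's own. It uses $h=H\circ P$, with $P(u)=\|u\|_\infty u/|u|$ and the hemisphere lift $H$; then $Z(u,s)=e^sh(u)$ extended by reflections, $\Lambda(v)=(h^{-1}(v/|v|),\log|v|)$, and $K_O\le(\lambda_{\max}/\lambda_{\min})^{n-1}\le L^{2(n-1)}$ applied to $DZ$ and to $D\Lambda=(DZ)^{-1}$.

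\textbf{The gap.} You never prove the one quantitative claim of the lemma, namely that $h$ and $h^{-1}$ are $L_n$-Lipschitz. Your account of where $L_n$ comes from is also partly wrong. The singular values of $DH$ lie in $[1,\pi/2]$, so the lift contributes at most $\pi/2$ to $\operatorname{Lip}(h)$ and at most $1$ to $\operatorname{Lip}(h^{-1})$. It cannot by itself produce $3\pi/2$: that number is $\operatorname{Lip}(P)\cdot\operatorname{Lip}(H)=3\cdot\pi/2$. The term $d+2\sqrt d$, with $d=n-1$, is $\operatorname{Lip}(P^{-1})$.

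What is missing is the Lipschitz constants of $P$ and $P^{-1}$. Write
\[
 P(u)-P(w)=\bigl(\|u\|_\infty-\|w\|_\infty\bigr)\frac{u}{|u|}
 +\|w\|_\infty\Bigl(\frac{u}{|u|}-\frac{w}{|w|}\Bigr).
\]
Then $\bigl|\|u\|_\infty-\|w\|_\infty\bigr|\le|u-w|$, $\|w\|_\infty\le|w|$ and $\bigl|u/|u|-w/|w|\bigr|\le2|u-w|/|w|$ give $\operatorname{Lip}(P)\le3$. The same splitting of $P^{-1}(v)=|v|v/\|v\|_\infty$, using $|v|\le\sqrt d\,\|v\|_\infty$, gives $\operatorname{Lip}(P^{-1})\le\sqrt d+\sqrt d(1+\sqrt d)=d+2\sqrt d$.

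Hence $\operatorname{Lip}(h)\le3\pi/2$ and $\operatorname{Lip}(h^{-1})\le d+2\sqrt d$, so the singular values of $Dh$ lie in $[L_n^{-1},L_n]$. Your infinitesimal bounds on $DH$ are enough for the distortion estimate. The paper also records the global inequality $|v-w|\le|H(v)-H(w)|\le\frac\pi2|v-w|$. It follows from $|H(v)-H(w)|^2=4\sin^2\frac{\pi(r-s)}4+\sin\frac{\pi r}2\sin\frac{\pi s}2\,|\theta-\zeta|^2$, where $v=r\theta$ and $w=s\zeta$.

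\textbf{Two smaller points.}

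First, $h$ need not commute with coordinate sign changes or permutations. Use the folded formula
$Z(u,s)=e^sS^{m_1+\cdots+m_d}h(q_m(u))$ on $C_m=2m+[-1,1]^d$, with $q_m(u)_j=(-1)^{m_j}(u_j-2m_j)$ and $S=\operatorname{diag}(1,\ldots,1,-1)$. At a point on a common face or edge, all adjacent cubes give the same folded coordinates. That point lies on $\partial[-1,1]^d$, $h$ sends it to the equator, and $S$ fixes the equator. So consistency holds for any $h$ mapping the cube boundary to the equator, with no symmetry needed.

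Second, the exponent $2(n-1)$ does not enter through $K_O\le K_I^{n-1}$. Going via $K_O(\Lambda)=K_I(Z)\le K_O(Z)^{n-1}$ would give only $L_n^{2(n-1)^2}$. The correct source, which you do use afterwards, is
$\lambda_n^n/(\lambda_1\cdots\lambda_n)=\lambda_n^{n-1}/(\lambda_1\cdots\lambda_{n-1})\le(\lambda_n/\lambda_1)^{n-1}$ with $\lambda_n/\lambda_1\le L_n^2$. Apply it separately to $DZ$ and to $(DZ)^{-1}$. Your first estimate giving $L^{2n-1}$ is then unnecessary.
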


\begin{proof}
Put $d=n-1$ and $C=[-1,1]^d$. We construct a bi-Lipschitz homeomorphism $h$ from $C$ onto the closed upper hemisphere of the unit sphere. The radial map
\[
 P(u)=\frac{\|u\|_\infty}{|u|}u\quad(u\ne0),\qquad P(0)=0,
\]
maps $C$ onto the closed unit ball of $\R^d$. Its inverse is
\[
 P^{-1}(v)=\frac{|v|}{\|v\|_\infty}v\quad(v\ne0).
\]
Both are Lipschitz. For example, using the triangle inequality and
$\bigl|\|u\|_\infty-\|v\|_\infty\bigr|\le|u-v|$, we obtain a Lipschitz constant $3$ for $P$. The same calculation applied to the inverse, with $\|u\|_\infty\ge|u|/\sqrt d$, leads to the bound $d+2\sqrt d$ for $P^{-1}$.

Map the unit ball of $\R^d$ onto the upper hemisphere by
\[
 H(v)=\left(\frac{\sin(\pi|v|/2)}{|v|}v,\ \cos(\pi|v|/2)\right),
\]
with its continuous value at zero. To check the bi-Lipschitz property, write $v=r\theta$ and $w=s\zeta$. Using the identity
\begin{align*}
 |H(v)-H(w)|^2
 &=4\sin^2\frac{\pi(r-s)}4
   +\sin\frac{\pi r}2\sin\frac{\pi s}2|\theta-\zeta|^2
\end{align*}
together with the inequalities $r\le\sin(\pi r/2)\le\pi r/2$ for $0\le r\le1$, we obtain
\[
 |v-w|\le|H(v)-H(w)|\le\frac\pi2|v-w|.
\]
The cases $r=0$ or $s=0$ follow by continuity. Thus $h=H\circ P$ is bi-Lipschitz. Its boundary is mapped to the equator. We may therefore take
\[
 L_n=\max\{3\pi/2,\,n-1+2\sqrt{n-1}\}
\]
as a common bi-Lipschitz constant for $h$ and $h^{-1}$. We choose the orientation of $h$ so that the map below is orientation preserving; precomposition with one coordinate reflection, if needed, does not change $L_n$.

On the beam $C\times\R$, define
\[
 Z(u,s)=e^sh(u).
\]
On its interior this is a homeomorphism onto $H_+$. At almost every $u$, the vectors in the range of $Dh(u)$ are orthogonal to $h(u)$, because $|h(u)|=1$. Consequently,
\[
 |DZ(u,s)(v,\tau)|^2
 =e^{2s}\bigl(|Dh(u)v|^2+\tau^2\bigr).
\]
Every singular value of $DZ$ lies between $L_n^{-1}e^s$ and $L_ne^s$. In particular, its outer distortion and the outer distortion of its inverse are at most $L_n^{2(n-1)}$.

We extend $Z$ to all beams by reflection. For $m=(m_1,\ldots,m_d)\in\mathbb Z^d$, let
\[
 C_m=2m+C,\qquad
 q_m(u)_j=(-1)^{m_j}(u_j-2m_j),\qquad
 S=\operatorname{diag}(1,\ldots,1,-1).
\]
For $u\in C_m$, set
\begin{equation}\label{eq:reflectionextension}
 Z(u,s)=e^sS^{m_1+\cdots+m_d}h(q_m(u)).
\end{equation}
The definitions agree on common faces. Indeed, the folded coordinates agree there, their images under $h$ lie on the equator, and $S$ fixes the equator pointwise. The formula therefore defines a continuous mapping. On each compact subset only finitely many beams occur. Their Lipschitz restrictions can therefore be glued, so the extension is locally Lipschitz across the faces as well.

Each domain reflection changes orientation once, and the accompanying target reflection changes it once more. Thus the Jacobian in \eqref{eq:reflectionextension} is positive almost everywhere. Orthogonal reflections preserve singular values, so the same distortion bound holds on every beam. The union of the beam faces has volume zero. Hence the extension is quasiregular and still satisfies $|Z(u,s)|=e^s$.

Finally, on $H_+$ the inverse of the original branch is
\[
 \Lambda(v)=\left(h^{-1}\left(\frac v{|v|}\right),\log|v|\right).
\]
It is locally bi-Lipschitz and quasiconformal, with outer distortion at most $L_n^{2(n-1)}$. Formula \eqref{eq:zorichmodulus} follows immediately.
\end{proof}

\section*{AI Declaration}
ChatGPT (OpenAI) was used for language editing, organisation, and assistance with finite symbolic calculations. The author independently checked all mathematical contents and takes full responsibility for the final manuscript.

\end{document}